\newcommand{\R}{ {\mathbb R} }
\newcommand{\Z}{ {\mathbb Z} }
\newcommand{\vb}{ \mathbf{b} }
\newcommand{\vc}{ \mathbf{c} }
\newcommand{\ve}{ \mathbf{e} }
\newcommand{\vf}{ \mathbf{f} }
\newcommand{\vk}{ \mathbf{k} }
\newcommand{\vm}{ \mathbf{m} }
\newcommand{\vp}{ \mathbf{p} }
\newcommand{\vw}{ \mathbf{w} }
\newcommand{\vx}{ \mathbf{x} }
\newcommand{\vy}{ \mathbf{y} }
\newcommand{\vz}{ \mathbf{z} }
\newcommand{\vzero}{ \mathbf{0} }
\newcommand{\Ker}{\operatorname{Ker}}
\newcommand{\rank}{\operatorname{rank}}
\newcommand{\toX}{\arrowvert_{\mathscr{X}}}
\newcommand{\ord}{{\text{th}}}   
\newcommand{\Bdmat}{[\partial_q]}
\DeclareMathOperator{\boundary}{\partial}
\newcommand{\abs}[1]{\lvert#1\rvert}
\newcommand{\homo}{{\sf H}}
\newcommand{\chains}{{\sf C}}
\newcommand{\cycles}{{\sf Z}}
\newcommand{\bdys}{{\sf B}}
\DeclareMathOperator{\image}{{\mathrm im\,}}
\newcommand{\rltvbndry}[3]{\operatorname{\boundary}_{#1}^{\, (#2,#3)}}
\definecolor{darkgrn}{rgb}{0, 0.8, 0}
\newtheorem{theorem}{Theorem}[section]
\newtheorem{lemma}[theorem]{Lemma}
\newtheorem{corollary}[theorem]{Corollary}
\theoremstyle{definition}
\newtheorem{definition}[theorem]{Definition}
\theoremstyle{remark}
\newtheorem{remark}[theorem]{Remark}
\begin{document}

\title{Non Total-Unimodularity Neutralized Simplicial Complexes}

\author{
Bala Krishnamoorthy\thanks{
Department of Mathematics,
Washington State University, Pullman, WA, USA.
Email: {\tt bkrishna@math.wsu.edu}}
\and
Gavin Smith\thanks{
Department of Mathematics,
Washington State University, Pullman, WA, USA.
Email: {\tt gsmith@math.wsu.edu}} 
}

\date{}

\maketitle

\begin{abstract}
  Given a simplicial complex $K$ with weights on its simplices and a
  chain on it, the Optimal Homologous Chain Problem (OHCP) is to find
  a chain with minimal weight that is homologous (over $\Z$) to the
  given chain. The OHCP is NP-complete, but if the boundary matrix of
  $K$ is totally unimodular (TU), it becomes solvable in polynomial
  time when modeled as a linear program (LP). We define a condition on
  the simplicial complex called non total-unimodularity neutralized,
  or {\em NTU neutralized}, which ensures that even when the boundary
  matrix is not TU, the OHCP LP must contain an integral optimal
  vertex for every input chain. This condition is a property of $K$,
  and is independent of the input chain and the weights on the
  simplices. This condition is strictly weaker than the boundary
  matrix being TU. More interestingly, the polytope of the OHCP LP may
  not be integral under this condition. Still, an integral optimal
  vertex exists for every right-hand side, i.e., for every input
  chain. Hence a much larger class of OHCP instances can be solved in
  polynomial time than previously considered possible. As a special
  case, we show that $2$-complexes with trivial first homology group
  are guaranteed to be NTU neutralized.
\end{abstract}

\section{Introduction} \label{sec-Intro}

Topological cycles in shapes capture their important features, and are
employed in many applications from science and engineering. A problem
of particular interest in this context is the optimal homologous cycle
problem, OHCP, where given a cycle in the shape, the goal is to
compute the shortest cycle in its topological class (homologous).  For
instance, one could generate a set of cycles from a simplicial complex
using the persistence algorithm \cite{EdLeZo2002} and then tighten
them while staying in their respective homology classes. The OHCP and
related problems have been widely studied in recent years both for two
dimensional complexes
\cite{ChVeErLaWh2008,ChErNa2009,ChFr2010,VeEr2006,DeLiSuCo2008} and
for higher dimensional instances \cite{SiGh2007, TaJa2009}. The OHCP
with homology defined over the popularly used field of $\Z_2$ was
known to be NP-hard \cite{ChFr2010a}. But it was shown recently that
if the homology is defined over $\Z$, then one could solve OHCP in
polynomial time when the simplicial complex has no relative torsion
\cite{DeHiKr2011}. The generalized decision version of the problem
considering chains instead of cycles (also termed OHCP) was recently
shown to be NP-complete \cite{DuHi2011}. Instances that fall in
between these two extreme cases have not been studied so far. In
particular, the complexity of OHCP in the presence of relative torsion
is not known.

The polynomial time solvability of OHCP was shown by modeling the
problem as a linear program (LP), and showing that the constraint
matrix of this LP is totally unimodular (TU) when the simplicial
complex does not have any relative torsion \cite{DeHiKr2011}. This
connection between TU matrices and polynomial time solvability of
integer programs (IPs) by solving their associated LPs is well known,
e.g., see \cite[Chap.~19--21]{Schrijver1986}. If the constraint matrix
of an LP is TU, then its polyhedron is integral, i.e., all its
vertices have integral coordinates. Two other concepts associated with
integral polyhedra that are weaker than TU matrices are $k$-balanced
matrices \cite{CoCoTr1994,CoCoVu2006} and totally dual integral (TDI)
systems \cite[Chap.~22]{Schrijver1986} \cite{DiFeZa2008}. But
applications of such weaker conditions to the OHCP LP and their
potential correspondences to the topology of simplicial complexes have
not been explored so far.

\paragraph{Our Contributions:}
We define a characterization of the simplicial complex termed non
total-unimodularity neutralized, or NTU neutralized for short, which
guarantees that even when there is relative torsion in the simplicial
complex, every instance of the OHCP LP has an integer optimal
solution. Under this condition, the OHCP instance for any input chain
with homology defined over $\Z$ could be solved in polynomial time
using linear programming even when the constraint matrix of the OHCP
LP is not TU. We arrive at our main result by studying the structure
of the OHCP LP, and characterizing several properties of its basic
solutions. Recall that the vertices of an LP correspond to its basic
feasible solutions. In particular, we prove that an OHCP LP for a
given input chain has a fractional basic solution if and only if the
OHCP LP with a component {\em elementary} chain, i.e., a chain with a
single nonzero coefficient of $1$, as input has a certain fractional
basic solution. Using this result, we show that no OHCP LP over the
given simplicial complex has a unique fractional optimal solution if
and only if every elementary chain involved in each relative torsion
has a {\em neutralizing chain} in the complex, i.e., when the
simplicial complex is NTU neutralized.

Our result partly fills the gap between the extreme cases of the OHCP
LP solving the OHCP when the complex has no relative torsion, and the
OHCP being NP-complete (there could well be instances that are
amenable to efficient solutions by methods distinct from solving
LPs). The condition of a complex being NTU neutralized is strictly
weaker than requiring its boundary matrix to be TU, or even to be
balanced. Further, this condition is a property of the simplicial
complex, and is independent of the input chain as well as the choice
of weights on the simplices. Hence a much broader class of OHCP
instances can be solved in polynomial time using linear programming
than previously considered possible. When the simplicial complex is
NTU neutralized, the linear system in the {\em dual} of the OHCP LP is
TDI. But this case does not appear to be covered by any of the
currently known characterizations of TDI systems. In particular, the
polytope of the OHCP LP may {\em not} be integral even when the
complex is NTU neutralized. Still, an integral optimal solution exists
for {\em every} integral right-hand side, i.e., for {\em every} input
chain. As a special case, we show that every $2$-complex with trivial
first homology group is guaranteed to be NTU neutralized.

\subsection{An example, and some intuition} \label{ssec-exmpl}
We illustrate the condition of a simplicial complex being NTU
neutralized by describing a set of two dimensional complexes related
to the M\"obius strip. A $2$-complex having no relative torsion is
equivalent to it having {\em no M\"obius strip}
\cite[Thm.~5.13]{DeHiKr2011}. Consider the three different
triangulations of a space in Figure~\ref{fig-threecplxs}. In the left
and right complexes, we have a M\"obius strip self-intersecting at one
($d$) and two vertices ($a,d$), respectively, resulting in relative
torsion in both cases. In the middle complex, the self intersection is
along the {\em edge} $ad$, hence we do not have relative
torsion. Hence the boundary matrix is TU only for the middle complex.
Still, in the right complex, the OHCP LP has an integral optimal
solution for every input chain.
\begin{figure}[ht!]
  \centering 
  \includegraphics[width=\columnwidth]{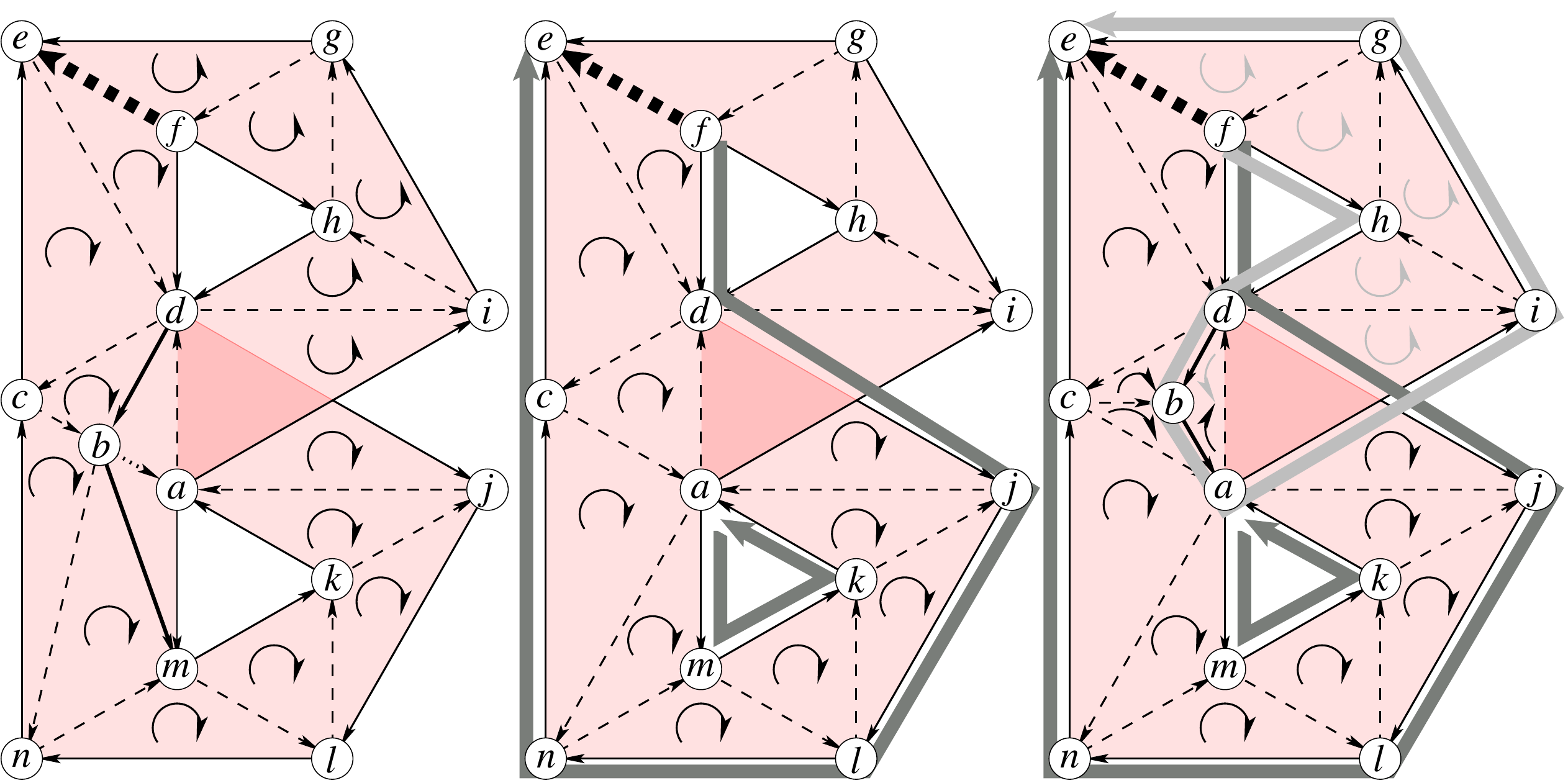} 
  \caption{Three triangulations of a space. The right complex is NTU
    neutralized, the left one is not. The middle complex has a TU
    boundary matrix.}
  \label{fig-threecplxs}
\end{figure}

For example, consider the edge $ef$ (shown in thick dashes) with
multiplier $1$ as the input chain. Let the edge weights be as follows:
dashed and dotted edges have weight $1$, thin solid edges have weights
of $0.05$, and thick solid edges have weights of $0.10$ each. The
dashed edges are the ``manifold'' edges in the potential M\"obius
strip in each complex. The thin solid edges are boundary edges. The
two pairs of thick solid edges are boundary edges in the candidate
M\"obius strips, but are each shared by two triangles in the
simplicial complex. Solving the OHCP involves pushing the heavy
manifold edge(s) onto the light boundary edges using the boundaries of
triangles. In the left complex, the unique optimal solution to the
OHCP LP corresponds to all the black solid edges, which is the
boundary of the M\"obius strip self-intersecting at vertex $d$, with
coefficients $\pm0.5$.  This instance illustrates the case of minimal
violation of TU we study in a general simplicial complex.

The optimal homologous chain is indicated in dark gray in the middle
and right complexes (it is the same in all three complexes).  In the
right complex, there are two integral optimal solutions to the OHCP
LP, which are outlined in dark and light gray. Any convex combination
of these two chains also corresponds to an optimal solution of the
OHCP LP, including the one made of all solid edges with coefficients
$\pm0.5$. This observation may be explained by the presence of a disk
whose boundary is an odd number of dashed edges, e.g., triangle $adc$,
which {\em neutralizes} the M\"obius strip. This ``odd disk''
provides an alternative to pushing the heavy manifold edge onto all
the light boundary edges by going around the entire M\"obius strip
with fractional multipliers. Instead, one could take a ``shortcut''
across the middle of the strip through the neutralizing chain,
permitting integer multipliers. In this case, there also exists a
complementary shortcut. The fractional solution going all the way
around the strip is a convex combination of the two shortcut integral
solutions. Our characterization of NTU neutralization generalizes this
observation to arbitrary dimensions. Intuitively, a complex is NTU
neutralized if there exists an ``odd disk'' providing such a shortcut
across every relative torsion, for each of its ``manifold'' elementary
chains (similar to edge $ef$ above).

Adding triangle $akm$ to the left complex makes it NTU
neutralized. For instance, $adcbnmlkja$ is a disc whose boundary is
$9$ dashed edges. Alternatively, adding both triangles $akm$ and $dfh$
to the left complex also makes it NTU neutralized. In this case, the
first homology group becomes trivial, which is a sufficient condition
for $2$-complexes to be NTU neutralized (Theorem
\ref{thm-1_hom_triv_NTUNeut}).

\section{Background} \label{sec-backg}

We recall some relevant basic concepts and definitions from algebraic
topology and optimization. Refer to standard books, e.g., ones by
Munkres~\cite{Munkres1984} and by Schrijver \cite{Schrijver1986}, for
details.

Given a vertex set $V$, a \emph{simplicial complex} $K=K(V)$ is a
collection of subsets $\{\sigma \subseteq V\}$ where $\sigma'\subseteq
\sigma$ is in $K$ if $\sigma\in K$.  A subset $\sigma\in K$ of
cardinality $q=p+1$ is called a \emph{$p$-simplex}.  If
$\sigma'\subseteq \sigma$ ($\sigma'\subset \sigma$), we call $\sigma'$
a \emph{face} (\emph{proper face}) of $\sigma$, and $\sigma$ a
\emph{coface} (\emph{proper coface}) of $\sigma'$.  An oriented
simplex $\sigma = \{v_0, v_1, \cdots, v_p\}$ or $v_0 v_1 \cdots v_p$
is an ordered set of vertices.  The simplices $\sigma_i$ with
coefficients $\alpha_i$ in $\Z$ can be added formally creating a chain
$c = \Sigma_i \alpha_i \sigma_i$.  These chains form the chain group
$\chains_p$.  The boundary $\partial_p \sigma$ of a $p$-simplex
$\sigma$, $p\geq 0$, is the $(p-1)$-chain that adds all the
$(p-1)$-faces of $\sigma$ considering their orientations.  This
defines a boundary homomorphism $\partial_p: \chains_p\rightarrow
\chains_{p-1}$.  The kernel of $\partial_p$ forms the $p$-cycle group
$\cycles_p(K)$ and its image forms the $(p-1)$-boundary group
$\bdys_{p-1}(K)$.  The homology group $\homo_p(K)$ is the quotient
group $\cycles_p(K)/\bdys_p(K)$.  Intuitively, a $p$-cycle is a
collection of oriented $p$-simplices whose boundary is zero. It is a
nontrivial cycle in $\homo_p$, if it is not a boundary of a
$q$-chain. 

For a finite simplicial complex $K$, the groups of chains
$\chains_p(K)$, cycles $\cycles_p(K)$, and $\homo_p(K)$ are all
finitely generated abelian groups. By the fundamental theorem of
finitely generated abelian groups \cite[page 24]{Munkres1984} any such
group $G$ can be written as a direct sum of two groups $G=F \oplus T$
where $F\cong (\Z \oplus\cdots\oplus \Z)$ and
$T\cong(\Z/t_1\oplus\cdots\oplus \Z/t_k)$ with $t_i>1$ and $t_i$
dividing $t_{i+1}$. The subgroup $T$ is called the \emph{torsion} of
$G$. If $T=0$, we say $G$ is \emph{torsion-free}. 

For a subcomplex $L_0$ of a simplicial complex $L$, the quotient group
$\chains_p(L)/\chains_p(L_0)$ is called the group of \emph{relative
  $p$-chains} of $L$ modulo $L_0$, denoted $\chains_p(L,L_0)$.  The
boundary operator $\boundary_p \colon \chains_p(L)\rightarrow
\chains_{p-1}(L)$ and its restriction to $L_0$ induce a
homomorphism
\[ \rltvbndry{p}{L}{L_0} \colon \chains_p(L,L_0)
\rightarrow \chains_{p-1}(L,L_0)\,.\]
Writing $\cycles_p(L,L_0)={\rm ker}\rltvbndry{p}{L}{L_0}$ for
\emph{relative cycles} and $\bdys_{p-1}(L,L_0)=\image
\rltvbndry{p}{L}{L_0}$ for \emph{relative boundaries}, we obtain the
\emph{relative homology group} 
\[ \homo_p(L,L_0) = \cycles_p(L,L_0)/\bdys_p(L,L_0).\]

Given the oriented simplicial complex $K$ of dimension $d$, and a
natural number $p$, $1 \le p \le d$, the \emph{$p$-boundary matrix} of
$K$, denoted $[\partial_{p}]$, is a matrix containing exactly one
column $j$ for each $p$-simplex $\sigma$ in $K$, and exactly one row
$i$ for each $(p-1)$-simplex $\tau$ in $K$.  If $\tau$ is not a face
of $\sigma$, then the entry in row $i$ and column $j$ is 0.  If $\tau$
is a face of $\sigma$, then this entry is $1$ if the orientation of
$\tau$ agrees with the orientation induced by $\sigma$ on $\tau$, and
$-1$ otherwise.

A matrix $A$ is totally unimodular (TU) if the determinant of each of
its square submatrix is either $0,1$, or $-1$.  Hence each $A_{ij} \in
\{0, \pm 1\}$ as well.  The importance of TU matrices for integer
programming is well known \cite[Chapters 19-21]{Schrijver1986}. In
particular, it is known that the {\em integer} linear program
\begin{equation} \label{eqn-IP}
  \min\,\{ \vf^T \vx ~|~ A \vx = \vb, \; \vx \ge \vzero, \vx \in
  \Z^n\}
\end{equation}
for $A \in \Z^{m \times n}, \vb \in \Z^n$ can {\em always}, i.e., for
every $\vf \in \R^n$, be solved in polynomial time by solving its
linear programming {\em relaxation} (obtained by ignoring $\vx \in
\Z^n$) if and only if $A$ is totally unimodular.  This result was
employed to show that the OHCP for the input $p$-chain $\vc$ modeled
as the following LP could be solved to get integer solutions under
certain conditions \cite[Eqn.~(4)]{DeHiKr2011}.
\begin{align}  \label{eqn-origOHCPLP} 
  \min \;&  \sum_i \, \abs{w_i} \, (x_i^+ + x_i^-)\notag\\
  \text{subject to}\quad &
  \vx^+ - \vx^- = \vc + [\boundary_{q}]\; \vy\\
  & \vx^+, \; \vx^- \ge \mathbf{0} \, . \notag
\end{align}
We assume the weights $w_i$ for $p$-simplices are
nonnegative. Replacing $\vy$ with two nonnegative variable vectors
$\vy^+$ and $\vy^-$, we rewrite the above LP in the following form.
\begin{align}
\min~ & 
\begin{bmatrix}
\vw^T & \vw^T & \vzero^T & \vzero^T
\end{bmatrix}
\vz  \nonumber \\
\label{eqn-OHCP}
\text{subject to}~~ & 
\begin{bmatrix}
~I & \,-I & \,-B & ~B~
\end{bmatrix}
\vz = \vc
\\
& \mbox{\hspace*{1.395in}}\vz \ge \vzero. \nonumber
\end{align}
Notice that $B=\Bdmat$, and the variable vector $\displaystyle \vz^T
= \begin{bmatrix} \vx^{+T} & \vx^{-T} & \vy^{+T} &
  \vy^{-T} \end{bmatrix}$. Recall that $x^+_i$ and $x^-_i$ correspond
to the $i\ord$ $p$-simplex, while $y^+_j, y^-_j$ capture the
coefficients for the $j\ord$ $q$-simplex. We refer to this formulation
as the OHCP LP from now on.  We let $P$ denote its feasible region,
and let $A = \begin{bmatrix} I & -I & -B & B \end{bmatrix}$ be the
constraint matrix of (\ref{eqn-OHCP}).  It was shown that $A$ is TU,
or equivalently, $P$ is integral if and only if $B$ is TU, which
happens \cite[Thm.~5.2]{DeHiKr2011} if and only if
$\homo_p\left(L,L_0\right)$ is torsion-free, for all pure subcomplexes
$L_0$, $L$ in $K$ of dimensions $p$ and $q$ respectively, where $L_0
\subset L$. Thus OHCP can be solved in polynomial time if the
simplicial complex is free of relative torsion.

The point $\vz$ is a {\em vertex} of $P$ if it is in $P$, but is not a
convex combination of any two distinct elements of $P$
\cite[Chap.~8]{Schrijver1986}. A \emph{basic solution} of a system of
linear equations is a point in a solution space of dimension $d$ where
a set of $d$ linearly independent constraints are active, i.e.,
satisfied as equations. If a basic solution of $P$ is feasible, then
it is a vertex \cite[Chap.~8]{Schrijver1986}.

\section{Characterizations of Basic Solutions of the OHCP LP} \label{sec-Basic_Sols_Char}

Our goal is to characterize the fractional basic feasible solutions,
or vertices, of the OHCP LP. Instead, we establish several properties
of basic solutions, by relaxing feasibility. This step simplifies the
analysis, and we prove that the basic solutions and vertices are
equivalent in a certain sense as explained below (see
Corollary~\ref{cor-equiv_vert}).

Notice that $P$ is the hyperplane defined by the equality constraints,
with the only bounds being the nonnegativity constraints.  We use
$P_A$ to denote the hyperplane that is $P$ without the bounds.  We use
$\vz$ to refer to a general element of $\R^{2(m+n)}$, and call $z_i$
an $x$-entry if $i \leq 2m$, and a $y$-entry if $i > 2m$.
\begin{definition}
  \label{def-concise}
  For any entry $z_i$ of $\vz \in \R^{2(m+n)}$, its \emph{opposite
    entry} is $z_{i+m}$ for $i \leq m, z_{i-m}$ for $m < i \leq 2m,
  z_{i+n}$ for $2m < i \leq 2m+n$, and $z_{i-n}$ for $2m+n < i \leq
  2(m+n)$.  We denote the opposite entry of $z_i$ as $z_{-i}$.  Any
  pair of opposite entries are coefficients for the same simplex.
  Hence for a pair of opposite entries $z_i, z_{-i}$ of $\vz$, if at
  least one of the two is 0, then $\vz$ is \emph{concise} in the
  $i\ord$ entry.  $\vz$ is \emph{concise} if it is concise in each
  entry.
\end{definition} 
The following definition translates coordinates of an OHCP LP solution
to the $i\ord$ row or $j\ord$ column of $\Bdmat$, and the $p$- and
$q$-simplices the row and column represent, respectively.
\begin{definition}
  \label{def-p-coeffs}
  For a solution $\vz$ of an OHCP LP, for any $i \leq m$, the
  \emph{$i\ord\ p$ coefficient} is $z_i - z_{-i}$, and for any $j: 2m
  < j \leq 2m + n$, the \emph{$(j - 2m)\ord\ q$ coefficient} is $z_j -
  z_{-j}$.
\end{definition}

In figures of simplices representing solutions to OHCP LPs, we
generally show the $p$- and $q$-coefficients of simplices, and assume
that all solutions illustrated are concise.  When we call a set of
solutions \emph{equivalent} we mean each has the same $p$- and
$q$-coefficients.

For any OHCP LP, there is the unique feasible concise solution where
all the $y$-coordinates are $0$.  We call this solution the
\emph{identity} solution, and denote it $\vz^I$.  For a given
simplicial complex $K$ and the constraint matrix $A$ associated with
its OHCP LP instances, we use $\vz^K$ to refer to an element of
$\Ker(A)$, the kernel of $A$.  For any integral $\vz^K$, the set of
$p$-coefficients of $\vz^K$ represent a $p$-chain that is
null-homologous in $K$.  We list some rather straightforward results
from linear algebra.
\begin{enumerate} 
\item Any $\vz \in P_A$ may be written as $\vz^I + \vz^K$.
\item Given $\vz \in P_A,\vz = \vz^0 + \vz^K$, then $\vz^0 \in P_A$ if
  and only if $\vz^K \in \Ker(A)$.
\item Because $A$ is rational, for any $\vz^K \in \Ker(A)$, there is
  some scalar $\alpha > 0$ such that $\alpha\vz^K$ is integral.
\end{enumerate}
The following theorem is foundational to many of our later results.
\begin{theorem}
  \label{thm-vk_active}
  Let $\vz \in P_A$.  $\vz$ is a basic solution if and only if
  $~\forall \vz^K \in \left(\Ker(A) \setminus
  \{\vzero\}\right),\ \exists i: z_i = 0, z_i^K \neq 0.$
\end{theorem}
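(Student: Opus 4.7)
The result is essentially a direct reformulation of the standard rank characterization of basic solutions, so the plan is to unpack the definition given in Section~\ref{sec-backg} carefully. Work in the ambient space of dimension $d = 2(m+n)$. At any $\vz \in P_A$, the active constraints of the system $\{A\vz = \vc,\, \vz \ge \vzero\}$ consist of the equalities $A\vz = \vc$ together with the tight nonnegativity constraints $\{z_i = 0 : i \in I(\vz)\}$, where $I(\vz) := \{i : z_i = 0\}$. By the paper's definition, $\vz$ is basic exactly when this collection has rank $d$, which holds if and only if the only $\vz^K \in \R^d$ satisfying both $A\vz^K = \vzero$ and $z_i^K = 0$ for every $i \in I(\vz)$ is $\vz^K = \vzero$.

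The condition stated in the theorem is precisely the contrapositive of this kernel characterization, so both directions follow almost immediately. For the forward implication, assume $\vz$ is basic and suppose, toward a contradiction, that some nonzero $\vz^K \in \Ker(A)$ has $z_i^K = 0$ for every $i \in I(\vz)$. Then $\vz^K$ lies in the kernel of the active-constraint matrix, violating the rank-$d$ condition. For the converse, assume that every nonzero $\vz^K \in \Ker(A)$ has some index $i$ with $z_i = 0$ and $z_i^K \ne 0$. Then no nonzero element of $\Ker(A)$ is supported on the complement of $I(\vz)$, so the combined system $A\vz^K = \vzero$ and $\{z_i^K = 0 : i \in I(\vz)\}$ admits only the trivial solution; this is exactly the rank-$d$ condition, and hence $\vz$ is basic.

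The only real point of care — and it is slight — is that $\vz \in P_A$ is not required to be feasible for the OHCP LP, since $P_A$ drops the bounds $\vz \ge \vzero$. The notion of basicness used here nevertheless depends only on the rank of the coefficient matrix of the equality and currently-tight nonnegativity constraints at $\vz$, and is well defined without feasibility. This matches the section's opening remark that feasibility is deliberately relaxed to streamline the analysis, with vertices recovered as feasible basic solutions in the subsequent Corollary~\ref{cor-equiv_vert}.
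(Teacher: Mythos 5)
Your proof is correct and takes essentially the same approach as the paper: both reduce the statement to the fact that $\vz$ is basic exactly when no nonzero $\vz^K \in \Ker(A)$ vanishes on all tight indices $\{i : z_i = 0\}$, i.e., when the active-constraint matrix has trivial kernel (rank $2(m+n)$). The paper merely phrases this geometrically, via the line segment $\vz \pm \vz^K$ along which all active constraints stay active, whereas you unpack the rank condition directly; the content is the same.
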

\begin{proof}
  We prove both directions by contrapositive.  Assume for some $\vz^K
  \neq \vzero$, there is no such $i$. Because $\vz^K \neq \vzero, \vz
  + \vz^K \neq \vz - \vz^K$. If $\vz \in P_A$ and $\vz^K \in \Ker(A),
  \vz + \alpha\vz^K \in P_A\ \forall \alpha \in \R$.  Therefore the
  line segment $L \subset \R^{2(m+n)}$ defined by the two distinct end
  points $\vz \pm \vz^K$ is contained in $P_A$. Hence all the equality
  constraints are active at all points in $L$.
  Consider an arbitrary inequality constraint $j := z_j \geq 0$.  If
  $j$ is active at $\vz$, then since there is no such $i$, $z_j^K =
  0$, and so $j$ must be active for all points in $L$.  Therefore any
  constraint active at $\vz$ is active at any point in $L$.  Therefore
  $\vz$ cannot be a basic solution.

  Now assume $\vz$ is not a basic solution.  Therefore there exists
  some line segment $L$ with $\vz$ in its interior where all
  constraints active at $\vz$ are active at all points in $L$.  Since
  $\vz \in P_A$, all equality constraints are active in $L$, so $L
  \subset P_A$.  Therefore, for any other interior point $\vz^0$ of
  $L$ we have $\vz^0 \neq \vz$ and hence $\vz - \vz^0 = \vz^K \in
  \left(\Ker(A) \setminus \{\vzero\}\right)$. Since all inequality
  constraints active at $\vz$ are active at $\vz^0$, we have $z_i = 0
  \implies z_i^0 = 0$, and hence $z_i^K = 0$.
\end{proof}

\begin{figure}[ht!]
  \centering
  \includegraphics[scale=0.8]{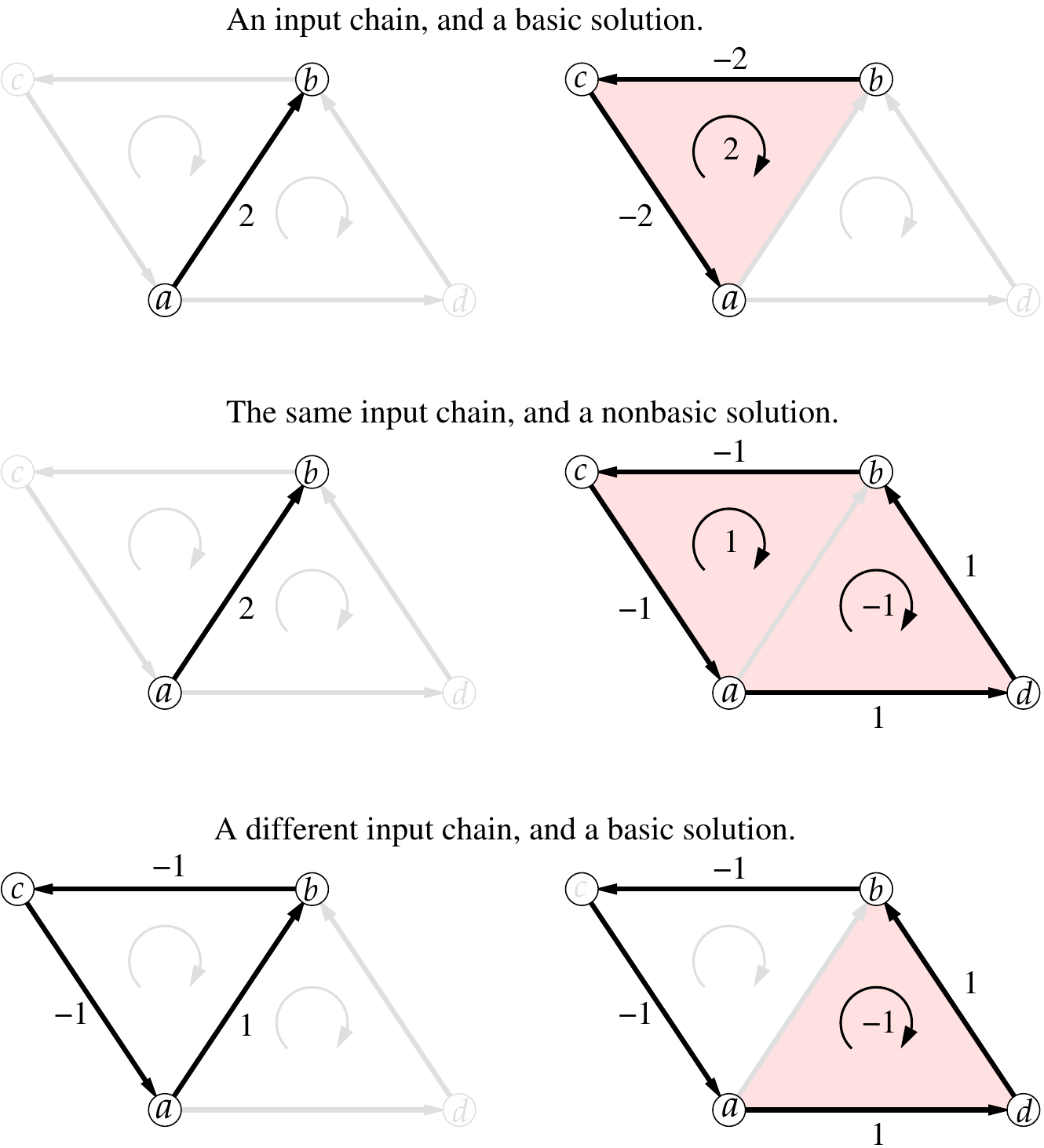}
  \caption{Simple examples of basic and nonbasic solutions.}
  \label{fig-Basic_Sol_Ex_1}
\end{figure} 

Figure~\ref{fig-Basic_Sol_Ex_1} illustrates nonbasic and basic
solutions of the OHCP LP in a $2$-complex.  Orientations of simplices
in $K$, coefficients of the input chains, and the $p$- and
$q$-coefficients of the solutions are shown.  Note that the
$p$-coefficients are the same for the second nonbasic solution and the
last basic solution. Whether or not a solution is basic can depend on
the $q$-coefficients and the input chain.

\bigskip
\noindent Consider the $2(m+n) \times (m + 2n)$ matrix
$ \displaystyle
N = \begin{bmatrix}
I_m &  &B \\
I_m \\
& I_n & I_n\\
& I_n
\end{bmatrix}
$ (entries not specified are zero).
The columns of $N$ form a basis of $\Ker(A)$.  Analyzing its
structure, together with Theorem~\ref{thm-vk_active}, yields the
following results, whose proofs we omit.
\begin{lemma}
  \label{lem-basic_sol_concise}
  Any basic solution of an OHCP LP is concise.
\end{lemma}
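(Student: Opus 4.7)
The plan is to prove the contrapositive via a one-line application of Theorem~\ref{thm-vk_active}, using the columns of $N$ that couple a single pair of opposite coordinates. Suppose $\vz \in P_A$ is not concise. Then there is some index $i$ with $z_i > 0$ and $z_{-i} > 0$, and the goal is to exhibit a single $\vz^K \in \Ker(A) \setminus \{\vzero\}$ that fails the characterization in Theorem~\ref{thm-vk_active}, i.e., such that every $k$ with $z^K_k \neq 0$ has $z_k > 0$.

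The natural candidate is $\vz^K := \ve_i + \ve_{-i}$, the indicator of the pair $\{i,-i\}$. First I would verify directly that this vector is in $\Ker(A)$ by inspecting $A = [\,I\;-I\;-B\;B\,]$: if $i$ labels an $x$-entry (say $i \le m$), the column of $I$ at index $i$ and the column of $-I$ at index $i$ are negatives of one another, so they cancel; if $i$ labels a $y$-entry, the corresponding columns of $-B$ and $B$ are negatives of one another and again cancel. This is exactly why the first and second column-blocks of the basis matrix $N$ have the block-pair shape $\bigl[\,I_m;I_m\,\bigr]$ and $\bigl[\,I_n;I_n\,\bigr]$, so $\vz^K$ is in fact one of those basis columns.

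With $\vz^K$ in hand, the contrapositive closes instantly: its support is exactly $\{i,-i\}$, so the only candidates for a coordinate $k$ with $z^K_k \neq 0$ in Theorem~\ref{thm-vk_active} are $k \in \{i,-i\}$; but by assumption neither $z_i$ nor $z_{-i}$ is zero, violating the characterization. Hence $\vz$ is not basic, proving the contrapositive. Since $i$ was arbitrary, any basic solution must be concise in every entry.

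I do not anticipate a genuine obstacle. The only care needed is bookkeeping around the Definition~\ref{def-concise} of ``opposite entry'' and confirming that the sign pattern of the paired columns of $A$ is indeed $(+1,-1)$ (so the indicator $\ve_i+\ve_{-i}$, rather than $\ve_i-\ve_{-i}$, lies in the kernel); both are immediate from how (\ref{eqn-OHCP}) was obtained by splitting $\vx = \vx^+ - \vx^-$ and $\vy = \vy^+ - \vy^-$.
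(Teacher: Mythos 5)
Your proof is correct and is exactly the argument the paper intends (and omits): the paired-indicator columns of $N$, namely $\bigl[\,I_m;I_m;0;0\,\bigr]$ and $\bigl[\,0;0;I_n;I_n\,\bigr]$, are kernel vectors supported on a single opposite pair, and Theorem~\ref{thm-vk_active} then rules out any basic solution with both entries of a pair nonzero. The only cosmetic slip is writing $z_i>0,\ z_{-i}>0$ where you need only $z_i\neq 0,\ z_{-i}\neq 0$ (entries of points in $P_A$ may be negative), which does not affect the argument.
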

\begin{lemma}
  \label{lem-vk_equiv_bi}
  Any $\vz^K \in \Ker(A)$ is equivalent to a linear combination of the
  last $n$ columns of $N$.
\end{lemma}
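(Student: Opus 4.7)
The plan is to read off the $p$- and $q$-coefficients of an arbitrary kernel element directly from the defining equation $A\vz^K = \vzero$, and then to exhibit an explicit linear combination of the last $n$ columns of $N$ that realizes those same coefficients. Writing $\vz^K = (\vx^{+K}, \vx^{-K}, \vy^{+K}, \vy^{-K})$ in the four blocks of $\R^{2(m+n)}$ and expanding $A\vz^K = \vzero$ gives
\[
\vx^{+K} - \vx^{-K} \;=\; B \bigl( \vy^{+K} - \vy^{-K} \bigr).
\]
Setting $\boldsymbol{\beta} := \vy^{+K} - \vy^{-K}$, the $q$-coefficient vector of $\vz^K$ is exactly $\boldsymbol{\beta}$, and by the above identity its $p$-coefficient vector is forced to be $B \boldsymbol{\beta}$.

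Next I would inspect the last $n$ columns of $N$ directly. The $j$-th of these places the $j$-th column $B e_j$ of $B$ into the $\vx^+$ block, $\vzero$ into $\vx^-$, the standard basis vector $e_j$ into $\vy^+$, and $\vzero$ into $\vy^-$. Hence the $p$-coefficient vector of this column is $B e_j$ and its $q$-coefficient vector is $e_j$. By linearity, the combination $\sum_{j=1}^{n} \beta_j \cdot (\text{$j$-th of the last $n$ columns})$ has $p$-coefficient vector $B \boldsymbol{\beta}$ and $q$-coefficient vector $\boldsymbol{\beta}$. These are exactly the coefficients of $\vz^K$ computed above, so the two vectors are equivalent in the sense of the definition preceding this lemma.

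I do not expect a serious obstacle here; the lemma is essentially a translation of the kernel equation into the language of $p$- and $q$-coefficients. The one bookkeeping point worth flagging is that the first $m$ columns of $N$ add equal amounts to pairs of opposite $x$-entries, and the middle $n$ columns do the same for opposite $y$-entries, so both sets contribute $0$ to every $p$- and every $q$-coefficient. This explains structurally why only the last $n$ columns are needed to recover a representative up to equivalence, and it is the reason the representation produced above is not required to be unique.
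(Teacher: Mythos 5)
Your proof is correct and is essentially the argument the paper has in mind: the paper omits the proof, saying only that it follows from the structure of $N$, and your direct computation (reading $\vx^{+K}-\vx^{-K}=B(\vy^{+K}-\vy^{-K})$ off the kernel equation and matching $p$- and $q$-coefficients with the combination $\sum_j \beta_j$ of the last $n$ columns) is exactly that structural analysis, with the signs and block bookkeeping handled correctly.
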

\begin{corollary}
  \label{cor-vk_conc_then_y_nz}
  If $\vz^K \in \left(\Ker(A) \setminus \{\vzero\}\right)$ is concise,
  then at least one $y$-coordinate in $\vz^K$ is nonzero.
\end{corollary}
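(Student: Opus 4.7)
My plan is to argue by contradiction: assume there is a nonzero concise $\vz^K \in \Ker(A)$ all of whose $y$-coordinates vanish, and deduce $\vz^K = \vzero$.

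The central tool will be Lemma \ref{lem-vk_equiv_bi}, which says $\vz^K$ has the same $p$- and $q$-coefficients as some linear combination $\sum_{j=1}^n \alpha_j N_{\cdot,\, m+n+j}$ of the last $n$ columns of $N$. Reading off those columns of $N$, the $q$-coefficient of the $j\ord$ $q$-simplex in such a combination is exactly $\alpha_j$, while the vector of $p$-coefficients is $B\boldsymbol{\alpha}$. Therefore, for any $\vz^K \in \Ker(A)$, its $q$-coefficients determine a vector $\boldsymbol{\alpha} \in \R^n$, and its $p$-coefficients are forced to be $B\boldsymbol{\alpha}$.

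Now suppose every $y$-coordinate of $\vz^K$ is zero. Then each $q$-coefficient $y_j^+ - y_j^-$ is zero, so $\boldsymbol{\alpha} = \vzero$, and consequently every $p$-coefficient $x_i^+ - x_i^-$ is zero as well, i.e., $x_i^+ = x_i^-$ for all $i$. Conciseness in the $i\ord$ $x$-entry says at least one of $x_i^+, x_i^-$ is zero, so this equality forces both to vanish. Combined with the vanishing of all $y$-coordinates, this gives $\vz^K = \vzero$, contradicting $\vz^K \neq \vzero$.

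I do not anticipate any real obstacle: once Lemma \ref{lem-vk_equiv_bi} pins down the kernel's structure in terms of the single parameter vector $\boldsymbol{\alpha}$, the argument is a short chain of implications. The only small conceptual step is noticing that, under conciseness, equal opposite entries must both be zero, which is what upgrades ``all differences vanish'' into ``all entries vanish.''
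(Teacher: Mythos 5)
Your argument is correct: using the structure of $N$ (equivalently, the kernel relation $\vx^+ - \vx^- = B(\vy^+ - \vy^-)$) to force all $p$-coefficients to vanish, and then using conciseness to upgrade vanishing differences to vanishing entries, is exactly the route the paper intends for this corollary, whose proof it omits as following from the structure of $N$. Nothing is missing.
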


\begin{lemma}
  \label{lem-last_OHCP_term_cancels}
  Let $\vz\in P_A$ be a basic solution. Let $\vz^0 \in P_A$ with
  $\vz^0$ concise in all $x$-entries.  Let $\vz = \vz^0 + \vz^K$, with
  $\vz^K$ being concise in all $x$-entries, and for each
  $y$-coordinate $j, z_j^0 \neq 0 \implies z_j \neq 0$. Then $\vz^K
  \neq \vzero$ if and only if there exists a $p$-coefficient that is 0
  in $\vz$, but nonzero in $\vz^0$.
\end{lemma}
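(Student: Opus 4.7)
The plan is to prove the two implications separately, treating the reverse direction as essentially bookkeeping and the forward direction as the real content.  For the reverse direction, if $\vz^K = \vzero$ then $\vz = \vz^0$, so the $p$-coefficients of $\vz$ and $\vz^0$ agree coordinate-wise, and in particular no $p$-coefficient can be $0$ in $\vz$ while nonzero in $\vz^0$.  Contrapositively, the existence of a mismatched $p$-coefficient forces $\vz^K \neq \vzero$.

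For the forward direction, the strategy is to feed the nonzero kernel element $\vz^K$ into Theorem~\ref{thm-vk_active} (applicable because $\vz$ is a basic solution), producing an index $i$ with $z_i = 0$ and $z_i^K \neq 0$.  The bulk of the argument then consists of showing that this index $i$ must be an $x$-entry whose opposite entry also vanishes in $\vz$, so that it gives rise to a $p$-coefficient of the desired type.  First I would rule out that $i$ is a $y$-entry: if it were, then $z_i^0 = z_i - z_i^K = -z_i^K \neq 0$, and the standing hypothesis ``$z_j^0 \neq 0 \implies z_j \neq 0$ for each $y$-coordinate $j$'' would contradict $z_i = 0$.  Hence $i \leq 2m$, and after relabeling via the opposite-entry symmetry I may assume $i \leq m$.

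It then remains to upgrade the single-coordinate equation $z_i = 0$ to the full $p$-coefficient equation $z_i - z_{-i} = 0$, and the inequality $z_i^K \neq 0$ to the full $p$-coefficient inequality $z_i^0 - z_{-i}^0 \neq 0$.  The first follows because $\vz^K$ is concise in $x$-entries, so $z_i^K \neq 0$ forces $z_{-i}^K = 0$, and then $z_{-i} = z_{-i}^0 + z_{-i}^K = z_{-i}^0$; combined with $\vz^0$'s conciseness in $x$-entries (applied to $z_i^0 = -z_i^K \neq 0$, which forces $z_{-i}^0 = 0$) this yields $z_{-i} = 0$.  The second follows from the same computation, since $z_i^0 - z_{-i}^0 = -z_i^K \neq 0$.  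The main obstacle is precisely this bookkeeping: keeping straight which of the three vectors $\vz, \vz^0, \vz^K$ is known to be concise on which entries, and threading the conciseness of $\vz^K$ and $\vz^0$ together with the $y$-entry implication so that the witness index from Theorem~\ref{thm-vk_active} lands in the $x$-block and propagates to its opposite entry.  The hypotheses of the lemma have been sculpted so that each plays exactly one role in this chain, which is a useful sanity check while writing the proof.
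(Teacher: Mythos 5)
Your proposal is correct and follows essentially the same route as the paper: the easy direction from $\vz = \vz^0 + \vz^K$, and for the other direction an application of Theorem~\ref{thm-vk_active} to the kernel element $\vz^K = \vz - \vz^0$, with the $y$-coordinate hypothesis forcing the witness index into the $x$-block and the conciseness of $\vz^0$ and $\vz^K$ in $x$-entries forcing the opposite entry to vanish, so the corresponding $p$-coefficient is $0$ in $\vz$ but nonzero in $\vz^0$. The only step left tacit (as it essentially is in the paper too) is that $\vz^K \in \Ker(A)$ because both $\vz$ and $\vz^0$ lie in $P_A$.
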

\begin{proof}
  If there is a $p$-coefficient that is 0 in $\vz$, but nonzero in
  $\vz^0$, then $\vz^K \neq \vzero$ simply because $\vz = \vz^0 +
  \vz^K$.  Now assume $\vz^K \neq \vzero$.  Then $\vz \neq \vz^0$.
  Because $\vz$ is basic, it is the only point in $P_A$ where all
  entries that are 0 at $\vz$ are 0.  Since for each $y$-coordinate
  $j, z_j^0 \neq 0 \implies z_j \neq 0$, there must be some $x$
  entries that are zero at $\vz$, but nonzero at $\vz^0$.  Let $i$ be
  one such entry.  Since $i$ is 0 at $\vz$ but nonzero at $\vz^0$, it
  must be nonzero in $\vz^K$.  Since $\vz^0$ and $\vz^K$ are concise
  in all $x$-entries, $-i$ must be 0 in both $\vz^0$ and $\vz^K$, and
  therefore also at $\vz$.  Therefore the $p$-coefficient
  corresponding to $i$ and $-i$ must be 0 in $\vz$ but nonzero in
  $\vz^0$.
\end{proof}

For $p = 1$, Lemma \ref{lem-last_OHCP_term_cancels} is saying that if
we attempt to get to a basic solution by adding a set of triangles to
the input chain, then adding that set of triangles must completely
cancel at least one edge.  Referring to the two basic solutions shown
in Figure \ref{fig-Basic_Sol_Ex_1}, the edge of the input chain
canceled is edge $ab$ in both cases.

\begin{definition}
  \label{def-coll_concise}
  A set of vectors is \emph{linearly concise} if any linear
  combination of the set is concise.
\end{definition}

The next result establishes a method for decomposing a nonbasic
solution into a basic solution and a remainder element of $\Ker(A)$,
which we will use in later analysis.  
\begin{theorem}
  \label{thm-v_plus_vk_basic}
  Let $\vz^0 \in P_A$ be a basic solution.  Let $\vz^K \in \Ker(A)$
  with $\{\vz^0, \vz^K\}$ linearly concise.  Let $\vz = \vz^0 +
  \vz^K$.  Then $\vz$ is a basic solution if and only if there do not
  exist $\vz^C, \vz^D$ satisfying the following properties:
  \begin{enumerate}
  \item \label{cnd-sum_vk} $\vz^C + \vz^D = \vz^K$.
  \item \label{cnd-in_KerA} $\vz^C, \vz^D \in \Ker(A)$.
  \item \label{cnd-vr_nz} $\vz^D \neq \vzero$.
  \item \label{cnd-coll_conc} $\{\vz^0, \vz^K, \vz^D\}$ is linearly
    concise.
  \item \label{cnd-v'basic} $\vz^0 + \vz^C = \vz^1$ is a basic
    solution.
  \item \label{cnd-no_y_elim} For each $y$-coordinate $j, z_j^1 \neq 0
    \implies z_j \neq 0$.
  \item \label{cnd-v_contains_vr} For each $x$-coordinate $i, z_i^D
    \neq 0 \implies z_i \neq 0$.
  \end{enumerate}
\end{theorem}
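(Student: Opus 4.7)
The plan is to prove both directions of the biconditional using Theorem~\ref{thm-vk_active} as the central tool. For brevity, write $\mathrm{supp}(\vw) := \{i : w_i \neq 0\}$, and observe that $\vz = \vz^1 + \vz^D$ where $\vz^1 := \vz^0 + \vz^C$.

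For the forward direction I argue the contrapositive: assuming a decomposition $(\vz^C, \vz^D)$ exists, I take $\vz^D$ itself as the kernel element demanded by Theorem~\ref{thm-vk_active}. Conditions~2 and~3 supply $\vz^D \in \Ker(A) \setminus \{\vzero\}$, and it remains to verify $\mathrm{supp}(\vz^D) \subseteq \mathrm{supp}(\vz)$. For $x$-entries this is condition~7 directly; for $y$-entries, if $z^D_j \neq 0$ and $z_j = 0$ then $z^1_j = z_j - z^D_j = -z^D_j \neq 0$, contradicting condition~6. Theorem~\ref{thm-vk_active} then yields that $\vz$ is not basic.

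For the reverse direction I construct $(\vz^C, \vz^D)$ by an iterative support-shrinking procedure. Since $\vz$ is not basic, Theorem~\ref{thm-vk_active} supplies $\vz^{D_0} \in \Ker(A) \setminus \{\vzero\}$ with $\mathrm{supp}(\vz^{D_0}) \subseteq \mathrm{supp}(\vz)$. Picking any $i_0 \in \mathrm{supp}(\vz^{D_0})$ and setting $\alpha_0 = z_{i_0}/z^{D_0}_{i_0}$, the vector $\vz - \alpha_0 \vz^{D_0}$ has support strictly contained in $\mathrm{supp}(\vz) \setminus \{i_0\}$. If this vector is not basic, I iterate the same construction on it; since the support strictly shrinks inside a finite index set, the process terminates with a basic $\vz^1$, and the cumulative displacement $\vz^D := \vz - \vz^1 \in \Ker(A)$ satisfies $\vz^D \neq \vzero$ and $\mathrm{supp}(\vz^D) \subseteq \mathrm{supp}(\vz)$. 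Defining $\vz^C := \vz^K - \vz^D$ makes conditions~1--3 and~5 immediate, while conditions~6 and~7 follow because $\mathrm{supp}(\vz^1),\,\mathrm{supp}(\vz^D) \subseteq \mathrm{supp}(\vz)$.

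The one delicate check is condition~4. I plan to first establish an auxiliary lemma: a set $\{\vw_1,\ldots,\vw_k\}$ is linearly concise if and only if, for every opposite pair $(i,-i)$, either $w_{\ell,i}=0$ for every $\ell$ or $w_{\ell,-i}=0$ for every $\ell$. The nontrivial direction follows because two proper linear subspaces of $\R^k$ cannot cover $\R^k$. Applied to $\{\vz^0,\vz^K\}$, the lemma says one side of each opposite pair vanishes in both $\vz^0$ and $\vz^K$, hence also in $\vz=\vz^0+\vz^K$, and therefore in $\vz^D$ by the support containment---giving linear conciseness of $\{\vz^0,\vz^K,\vz^D\}$. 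The main obstacle is precisely this linear-conciseness bookkeeping combined with ensuring the iterative shrinking actually reaches a basic solution; the central conceptual point is that all seven conditions ultimately collapse to the single requirement $\mathrm{supp}(\vz^D) \subseteq \mathrm{supp}(\vz)$.
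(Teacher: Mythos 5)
Your proof is correct, and its skeleton matches the paper's: both directions are argued by contrapositive, Theorem~\ref{thm-vk_active} is the engine, and the reverse direction iteratively peels kernel elements off $\vz$ until a basic $\vz^1$ remains. The details, however, take a genuinely different (and somewhat leaner) route. In the forward direction you apply Theorem~\ref{thm-vk_active} directly to the witness $\vz^D$, checking the support containment on $x$-entries via Property~\ref{cnd-v_contains_vr} and on $y$-entries via Property~\ref{cnd-no_y_elim}; the paper instead routes through Lemma~\ref{lem-last_OHCP_term_cancels} and a chain of conciseness translations, and your version is shorter and does not even need Property~\ref{cnd-coll_conc}. In the reverse direction the paper's algorithm pivots only on $y$-coordinates with a minimum-ratio rule (justified by Corollary~\ref{cor-vk_conc_then_y_nz}), which keeps the whole family $\{\vz^0,\vz^K,\vz^D,\vz^1,\vz^N\}$ linearly concise as a running invariant and is the specific form of the algorithm that is reused later (e.g., inside Theorem~\ref{thm-if_v_then_V}); you instead zero out one arbitrary coordinate per step and rely only on the monotone shrinkage of the support inside the support of $\vz$, which indeed suffices for Properties~\ref{cnd-no_y_elim} and~\ref{cnd-v_contains_vr}, and you then recover Property~\ref{cnd-coll_conc} afterwards from your characterization of linear conciseness (for each opposite pair of entries, one side vanishes across the entire set, proved via the fact that two proper subspaces cannot cover the coefficient space) --- this is correct and arguably cleaner than the paper's invariant bookkeeping, though it buys fewer side properties for later reuse. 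Two points you should state explicitly: the loop terminates either at a basic $\vz^1$ or, after the support is exhausted, at $\vz^1=\vzero$, which is itself basic by Theorem~\ref{thm-vk_active}, so it cannot stall; and $\vz^D\neq\vzero$ because the first zeroed coordinate $i_0$ remains zero in all later iterates (subsequent kernel elements are supported in the current, strictly smaller support), so the $i_0$ entry of $\vz^D$ equals $z_{i_0}\neq 0$, giving Property~\ref{cnd-vr_nz}.
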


\begin{proof}
  Suppose there exists such a decomposition of $\vz^K$.  By
  Property~\ref{cnd-in_KerA}, and that $\vz^0 \in P_A$, we have that
  $\vz \in P_A$.  Therefore if $\vz$ is a basic solution, then the
  vectors $\vz, \vz^1, \vz^D$ satisfy the conditions for $\vz, \vz^0,
  \vz^K$ in Lemma~\ref{lem-last_OHCP_term_cancels}.  Because of
  Property~\ref{cnd-coll_conc}, $\{\vz^0, \vz^K, \vz^C, \vz^D,
  \vz^1,\vz\}$ is linearly concise.  Therefore $\left(\forall
  x\text{-coordinate } i \right.$, $\left. z_i = z_{-i} \implies z_i^1
  = z_{-i}^1\right)$ $\iff$ $\left(\forall x\text{-coordinate } i
  \right.$, $z_i = 0$ $\left.\implies z_i^1 = 0\right)$ $\iff$
  $\left(\forall x\text{-coordinate } i \right.$, $z_i = 0$ $\left.
  \implies z_i^D = 0\right)$ $\iff$ $\left(\forall x\text{-coordinate
  } i\right.$, $z_i^D \neq 0 \implies$ $\left.z_i \neq 0\right)$.  So
  by Lemma~\ref{lem-last_OHCP_term_cancels}, $\vz$ is not a basic
  solution.

  Now suppose $\vz$ is not a basic solution. $\{\vz^0, \vz^K, \vz\}$
  is still linearly concise.  Construct $\vz^D$ and find $\vz^1$ using
  the following algorithm.
  \begin{enumerate}
  \item Let $\vz^D = \vzero, \vz^1 = \vz$.  Then $\{\vz^0, \vz^K,
    \vz^D, \vz^1\}$ is linearly concise.
  \item 
    \label{stp-vprime_not_basic}
    $\vz^1$ must be in $P_A$, and is not a basic solution.  By
    Theorem~\ref{thm-vk_active}, $\exists \, \vz^N \in (\Ker(A)
    \setminus \{\vzero\})$ where $z_i^N \neq 0 \implies z_i^1 \neq 0$.
    Because $\{\vz^0, \vz^K, \vz^D, \vz^1\}$ is linearly concise,
    $\{\vz^0, \vz^K, \vz^D, \vz^1,\vz^N\}$ is linearly concise.
  \item By Corollary~\ref{cor-vk_conc_then_y_nz}, $z_i^N \neq 0$ for
    some $y$-coordinate $i$.  Find $i$ such that $z_j^N \neq 0, j > 2m
    \implies \abs{z_i^1 / z_i^N} \leq \abs{z_j^1 / z_j^N}$.
  \item Let $\alpha = z_i^1 / z_i^N$.
  \item Let $\vz^D = \vz^D + \alpha\vz^N, \vz^1 = \vz^1 -
    \alpha\vz^N$.  Because we may add any linear combination of a set
    of linearly concise vectors to the linearly concise set, $\{\vz^0,
    \vz^K, \vz^D, \vz^1,\vz^N\}$ is still linearly concise.
  \item IF $\vz^1$ is not a basic solution THEN LOOP to
    Step~\ref{stp-vprime_not_basic}.
  \item STOP.
  \end{enumerate}
  Because $K$ is finite, and we make at least one $y$-entry zero that
  was nonzero in $\vz^1$ in each loop, and do not make any zero
  $y$-entries in $\vz^1$ nonzero. Hence by
  Theorem~\ref{thm-vk_active}, this algorithm must eventually
  terminate. More precisely, it must terminate after at most $n$
  iterations.  By our criteria of choosing $\vz^N$ in each loop,
  $\vz^D, \vz^1$, and $\vz^C = \vz^K - \vz^D$ satisfy all the criteria
  of the theorem.
\end{proof}

\begin{figure}[ht!]
  \centering
  \includegraphics[scale=0.8]{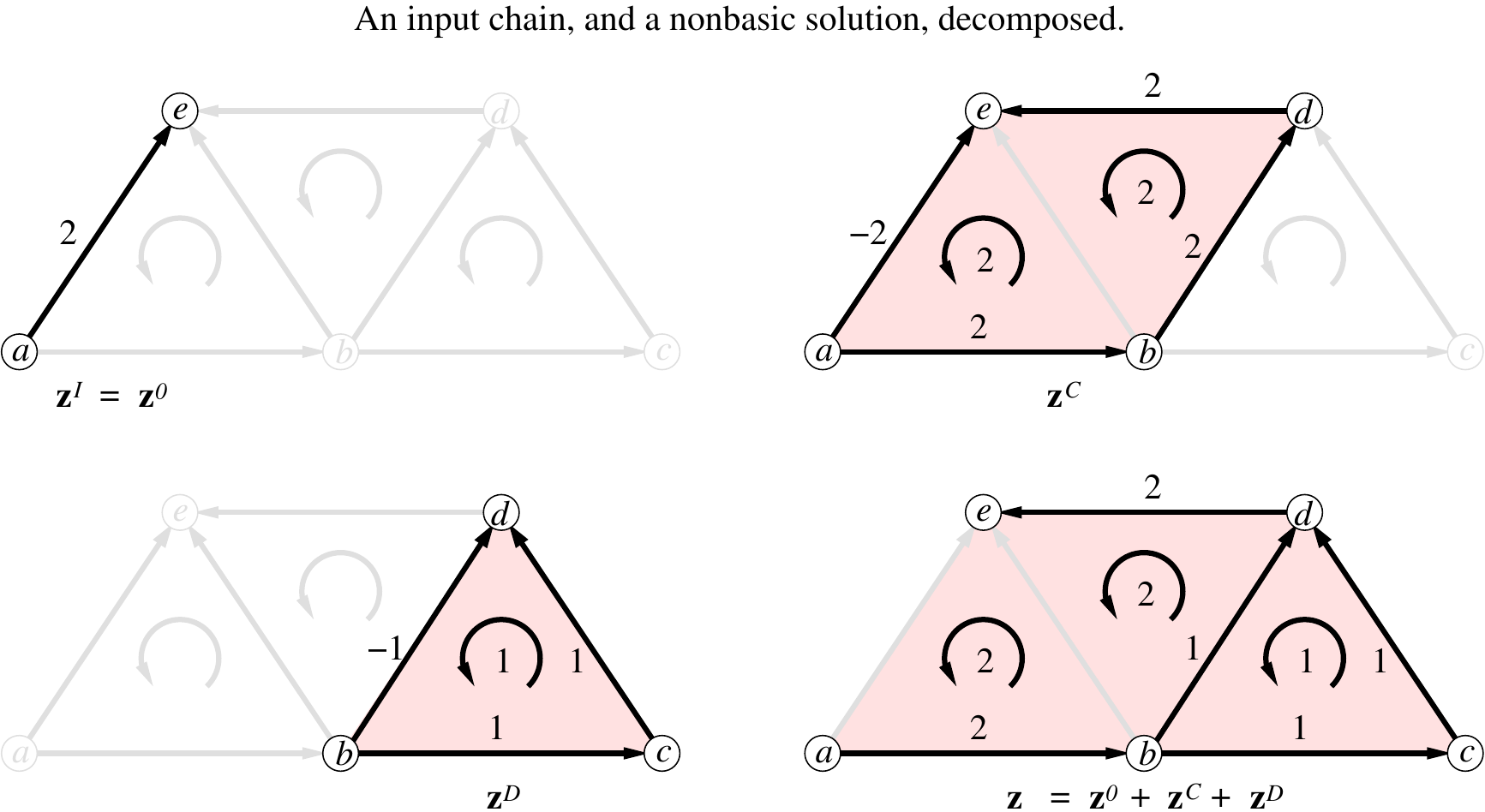}
  \caption{An illustration of $\vz^C$ and $\vz^D$.}
  \label{fig-zC_and_zD}
\end{figure} 

The decomposition described in Theorem~\ref{thm-v_plus_vk_basic}
isolates the portion(s) of a solution that makes it nonbasic.
Figure~\ref{fig-zC_and_zD} illustrates a simple example.  At upper
left is the input chain, equivalent to $\vz^I$, and which takes the
role of $\vz^0$ in the theorem.  Note that the illustration only shows
equivalence classes under $p$- and $q$-coefficients.  The next
Lemma~\ref{lem-must_cancel_two} describes necessary conditions to
transform a nonbasic solution to a basic one.
\begin{lemma}
  \label{lem-must_cancel_two}
  Let $\vz^0 \in P_A$ be concise.  Let $\vz = \vz^0 + \vz^1$ where
  $z_j^0 \neq 0 \implies z_j^1 = 0\ \forall j > 2m$. If $\vz$ is a
  basic solution in $P_A$, then for each $\vz^K \in \left(\Ker(A)
  \setminus \{\alpha\vz^1\}\right)\left(\alpha \in \R\right)$ where
  $z_i^K \neq 0 \implies z_i^0 \neq 0$, there must be two
  $x$-coordinates $r$ and $s$ where $z_r = z_s = 0$, $z_r^K, z_s^K
  \neq 0$, and $\displaystyle \frac{z_r^K}{z_r^1} \neq
  \frac{z_s^K}{z_s^1}$.  Furthermore, if $O_r$ and $O_s$ are the OHCP
  LPs with input chains where the only nonzero coefficients are $r$
  and $s$, respectively, with these coefficients equaling those in
  $\vz^0$, then $\vz^1 + \vz^{I\prime}$ is a basic solution to $O_r$
  or $O_s$ where $\vz^{I\prime}$ is the solution with $\{\vz^1,
  \vz^{I\prime}\}$ linearly concise and equivalent to the identity
  solution for $O_r$ or $O_s$, respectively.
\end{lemma}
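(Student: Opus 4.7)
The plan is to leverage Theorem~\ref{thm-vk_active} twice: once to produce the pair $(r,s)$, and once to verify basicness of $\vz^1 + \vz^{I\prime}$ in at least one of $O_r, O_s$. Since $\vz, \vz^0 \in P_A$, we have $\vz^1 = \vz - \vz^0 \in \Ker(A)$. Because $\vz^K$ is not a scalar multiple of $\vz^1$, the pair $\{\vz^K, \vz^1\}$ is linearly independent in $\Ker(A)$, and since $\vz$ is basic, Theorem~\ref{thm-vk_active} forces the projection $\pi \colon \Ker(A) \to \R^F$ onto the active set $F := \{i : z_i = 0\}$ to be injective; consequently $\pi(\vz^K)$ and $\pi(\vz^1)$ span a two-dimensional subspace of $\R^F$.

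Let $S := \{i \in F : z_i^K \neq 0\}$ and $U := \{i \in F : z_i^1 \neq 0\}$. Using the support hypothesis $z_i^K \neq 0 \Rightarrow z_i^0 \neq 0$ together with conciseness of $\vz^0$, I would check that $S$ contains only $x$-coordinates (a $y$-coordinate $j \in S$ would yield $z_j^0 \neq 0$, the hypothesis on $\vz^1$ would then force $z_j^1 = 0$, and so $z_j = z_j^0 \neq 0$, contradicting $j \in F$) and that $S \subseteq U$ (for $i \in S$, $z_i = 0$ combined with $z_i^0 \neq 0$ yields $z_i^1 = -z_i^0 \neq 0$). I would also note that $\vz^K$ is itself concise, inherited from the conciseness of $\vz^0$ via the support hypothesis.

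To produce $r, s \in S$ with $z_r^K/z_r^1 \neq z_s^K/z_s^1$, I would argue by contradiction. Suppose $\psi(i) := z_i^K / z_i^1$ is constant equal to some $\alpha$ on $S$. Then $\vw := \vz^K - \alpha \vz^1$ is a nonzero element of $\Ker(A)$ whose projection to $F$ vanishes on $S$; Theorem~\ref{thm-vk_active} applied to $\vw$ then supplies a witness $j \in U \setminus S$, and the same $y$-coordinate argument as above forces $j$ to be an $x$-coordinate. The main obstacle will be closing this contradiction: I would invoke the $N$-basis description of $\Ker(A)$ stated just after Theorem~\ref{thm-vk_active}, together with the conciseness of $\vz^K$, to argue that any nonzero kernel element whose $F$-support of $x$-coordinates lies entirely in $U \setminus S$ must in fact be a scalar multiple of $\vz^1$. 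This contradicts both $\vw \neq \vzero$ and $\vz^K \notin \{\alpha \vz^1\}$, giving the required $r, s$.

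For the ``furthermore'' clause, let $\vz^{I\prime}$ denote the identity-equivalent solution to $O_r$ (respectively $O_s$) chosen so that $\{\vz^1, \vz^{I\prime}\}$ is linearly concise; then $\vz^1 + \vz^{I\prime}$ lies in $P_A$ for the corresponding single-simplex input chain. To verify basicness via Theorem~\ref{thm-vk_active}, I would show that any nonzero $\vu \in \Ker(A)$ failing the witness criterion for both $O_r$ and $O_s$ simultaneously would, after recombining with $\vz^{I\prime}$, produce a kernel direction with no witness at $\vz$, contradicting basicness of $\vz$. The role of the two cancellations is precisely to supply the missing witness: the coordinate $s$ witnesses the criterion for $O_r$ while $r$ witnesses it for $O_s$, so at least one of $O_r, O_s$ admits $\vz^1 + \vz^{I\prime}$ as a basic solution.
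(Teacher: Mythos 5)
Your first part follows the paper's own route: assume all the ratios agree, form $\vz^K - \alpha\vz^1 \in \Ker(A)$, and play it against Theorem~\ref{thm-vk_active}; your bookkeeping showing that the cancelled coordinates of $\vz^K$ are $x$-coordinates lying in the support of $\vz^1$ is exactly the paper's. The genuine gap is the step you yourself label ``the main obstacle.'' To finish, you must show that $\vz^K - \alpha\vz^1$ has \emph{no} witness, i.e., that it also vanishes at every $x$-coordinate $i$ with $z_i = 0$, $z_i^K = 0$ but $z_i^1 \neq 0$ (your set $U \setminus S$). Your proposed fix --- that any nonzero kernel element whose active-set $x$-support lies in $U \setminus S$ must be a scalar multiple of $\vz^1$ --- is not proved, and it does not follow from the basis description of $\Ker(A)$ (Lemma~\ref{lem-vk_equiv_bi}), which constrains nothing about zero patterns on the active set; nor does it follow from basicness of $\vz$, which gives only injectivity of the projection onto the active coordinates and hence merely that $\vz^K - \alpha\vz^1$ has \emph{some} witness --- the wrong direction for your contradiction. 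At the corresponding point the paper simply asserts $z_i = 0 \implies z_i^K - \alpha z_i^1 = 0$ for all $i \leq 2m$; your substitute is a different and stronger claim left unestablished, so the contradiction is not closed.

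The ``furthermore'' clause is also not delivered by your sketch. A nonzero $\mathbf{u} \in \Ker(A)$ with no witness at $\vz^1 + \vz^{I\prime}$ is supported inside the support of $\vz^1 + \vz^{I\prime}$, and that support contains coordinates at which $\vz$ vanishes (the coordinate $s$ itself in the $O_r$ case, and more generally every coordinate of $\vz^0$ cancelled by $\vz^1$); such a $\mathbf{u}$ therefore \emph{does} have a witness at $\vz$, so ``recombining with $\vz^{I\prime}$'' yields no violation of the basicness of $\vz$ and no contradiction. Likewise, saying that ``$s$ witnesses the criterion for $O_r$ while $r$ witnesses it for $O_s$'' misreads Theorem~\ref{thm-vk_active}: basicness requires a witness for \emph{every} nonzero kernel element, not a single distinguished coordinate. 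The paper's argument here is substantively different: assuming $\vz^{I\prime} + \vz^1$ is not basic for $O_r$, it decomposes $\vz^1 = \vz^C + \vz^D$ via Theorem~\ref{thm-v_plus_vk_basic} and then argues by cases on whether $\vz^D$ cancels $z_s^0$ in the original LP, invoking the first half of the lemma; some mechanism of this kind is needed and is absent from your proposal.
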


\begin{proof}
  By Theorem~\ref{thm-vk_active}, the existence of $\vz^K \in
  \left(\Ker(A) \setminus \{\alpha\vz^1\}\right)$ where $z_i^K \neq 0
  \implies z_i^0 \neq 0$, implies $\vz^0$ is not a basic solution.
  Assume there is such a $\vz^K$ with no such $r$ and $s$.  By $z_j^0
  \neq 0 \implies z_j^1 = 0\ \forall j > 2m$, we get $z_j^K \neq 0
  \implies z_j \neq 0\ \forall j > 2m$. So if there is no
  $x$-coordinate $r$ where $\vz_r^K \neq 0, \vz_r = 0$, then $\vz$
  cannot be a basic solution. Let $\mathscr{R}$ be the set of
  $x$-coordinates where $\vz_r^K \neq 0, \vz_r = 0$, and assume
  $\mathscr{R}$ is nonempty.  Because there is no such $r$ and $s$,
  there is some $\alpha \in \R$ such that $(z_r^K/z_r^1) =
  \alpha\ \forall r \in \mathscr{R}$. If $\vz \in P_A$, then $\vz^1
  \in \Ker(A)$. So then $\vz^K - \alpha\vz^1$ is in $\Ker(A)$, and
  $z_i = 0 \implies z_i^K - \alpha z_i^1 = 0\ \forall i \leq 2m$.
  Because $\vz^K \neq \alpha\vz^1,\ \vz^K - \alpha\vz^1 \neq \vzero$.
  Therefore by Theorem~\ref{thm-vk_active}, $\vz$ is not a basic
  solution.

  Let $\vz^{I\prime}$ be the solution with $\{\vz^1, \vz^{I\prime}\}$
  linearly concise and equivalent to the identity solution for $O_r$.
  If $\vz^{I\prime} + \vz^1$ is not a basic solution to $O_r$, then we
  may decompose $\vz^1$ into $\vz^C + \vz^D$ according to
  Theorem~\ref{thm-v_plus_vk_basic}.  If $\vz^D$ does not bring
  $z_s^0$ to zero in our original OHCP LP $O$, then $\vz$ cannot be a
  basic solution of $O$ because all nonzero coefficients of $\vz^D$
  will be nonzero in $\vz$.  Even if it does bring $z_s^0$ to zero,
  $\vz^0 + \vz^C$ cannot be a basic solution because of the first part
  of this lemma, and because $\vz^D$ does not bring any coefficients
  of $\vz^C$ to zero, by the first part of this Lemma, $\vz^0 + \vz^C
  + \vz^D$ cannot be a basic solution unless there is another $r$ and
  $s$ satisfying all qualities of the lemma. A symmetric argument
  holds replacing $r$ with $s$ and vice versa.
\end{proof}

\begin{figure}[ht!]
  \centering
  \includegraphics[scale=0.7]{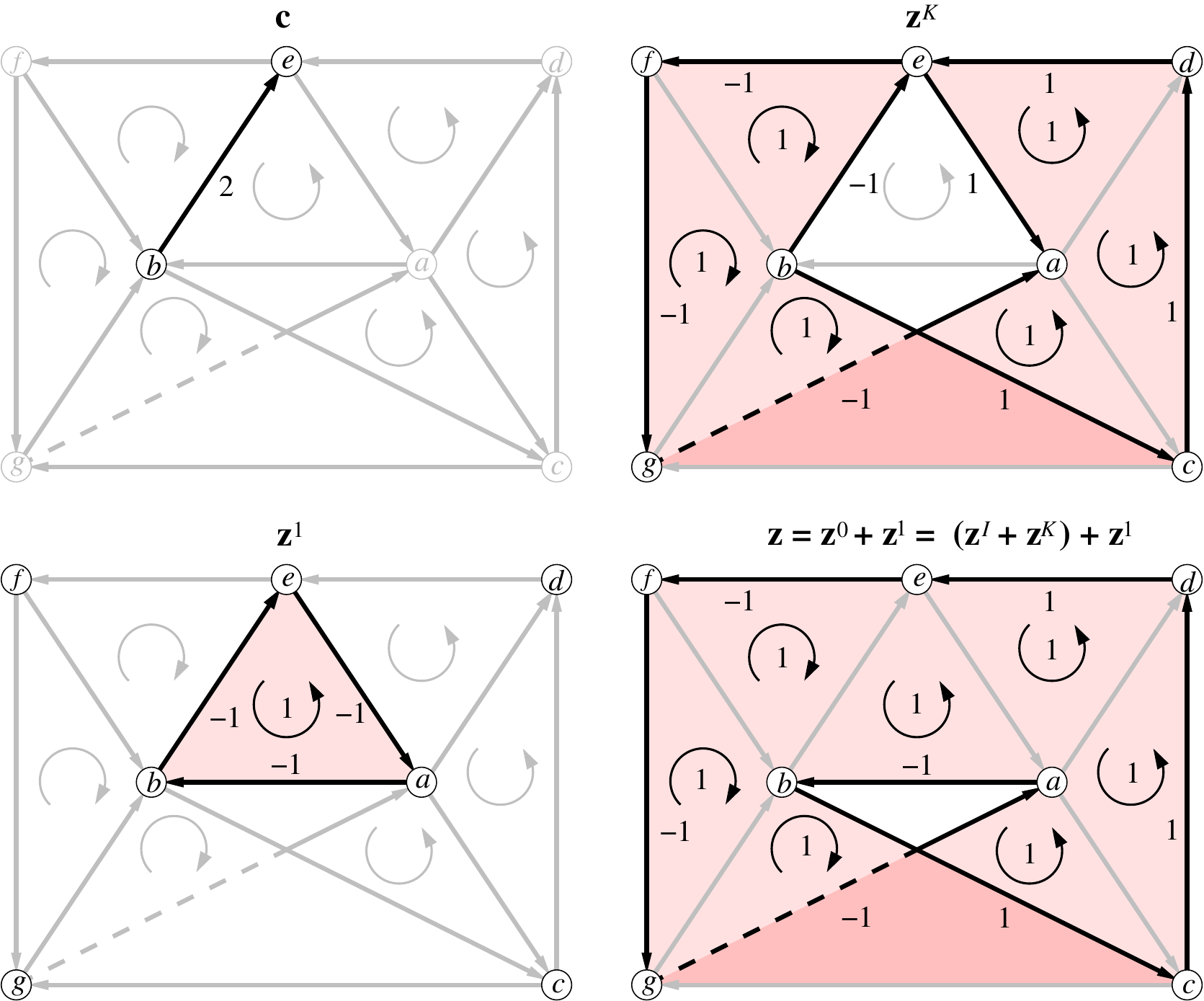}
  \caption{A M\"obius strip illustrating
    Lemma~\ref{lem-must_cancel_two}.}
  \label{fig-nbasic_2_basic}
\end{figure} 

We illustrate the Lemma in Figure \ref{fig-nbasic_2_basic}.  In this
example, $r$ and $s$ correspond to the edges $be$ and $ea$,
respectively, of $\vz^1$. Note that both of these edges have
coefficients of $-1$ in $\vz^1$, but one has a coefficient of $1$ in
$\vz^K$.  Therefore the inequality of the ratios specified in Lemma
\ref{lem-must_cancel_two} holds; one ratio is $1$, and the other $-1$.
Isolating the edges corresponding to $r$ and $s$ as inputs with
coefficients taken from $\vz^0$ will show that the other requirements
of Lemma~\ref{lem-must_cancel_two} are also met.

The remaining results of this section describe the relationship
between the existence of (non)integral basic solutions of an OHCP LP,
and the existence of (non)integral vertices.  
\begin{lemma}
  \label{lem-equiv_coll_conc}
  Let $\vz^0$ be concise.  For any $\vz$, there exists a $\vz'$ such
  that $\{\vz', \vz^0\}$ is linearly concise, and $\vz$ and $\vz'$ are
  equivalent.
\end{lemma}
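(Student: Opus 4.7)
The plan is to construct $\vz'$ entry-pair by entry-pair, using the conciseness of $\vz^0$ to decide on which side of each opposite pair to place the value. For each pair of opposite entries indexed by $i$ and $-i$, the equivalence condition constrains only the difference $z_i' - z_{-i}'$ to equal the $p$- or $q$-coefficient $z_i - z_{-i}$ of $\vz$, leaving a one-parameter family of choices. The idea is to exploit this freedom to avoid any ``collision'' with $\vz^0$ in linear combinations.

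First I would observe that in order for $\{\vz^0, \vz'\}$ to be linearly concise, a short case analysis on a single pair $(i,-i)$ forces the following: if $z_i^0 \neq 0$ and $z_{-i}^0 = 0$, then $z_{-i}'$ must be zero (otherwise setting $\alpha=0, \beta=1$ would violate conciseness on the $-i$ side while setting $\alpha,\beta=1$ would violate it on the $i$ side); symmetrically if $z_{-i}^0 \neq 0$. If both entries of $\vz^0$ are zero in the pair, then $\vz'$ need only be concise in that pair on its own, and either side will do. This tells me precisely how to define $\vz'$.

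Given this, I would define $\vz'$ as follows: for a pair where $z_i^0 \neq 0$ and $z_{-i}^0 = 0$, set $z_i' := z_i - z_{-i}$ and $z_{-i}' := 0$; for a pair where $z_{-i}^0 \neq 0$ and $z_i^0 = 0$, set $z_{-i}' := z_{-i} - z_i$ and $z_i' := 0$; and for a pair where both entries of $\vz^0$ vanish, place the value $z_i - z_{-i}$ on whichever side makes it nonnegative (so that $\vz'$ is concise in this pair). By construction $z_i' - z_{-i}' = z_i - z_{-i}$ in every pair, so $\vz$ and $\vz'$ are equivalent.

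The verification that $\{\vz^0, \vz'\}$ is linearly concise is then routine: in any pair $(i,-i)$ where $\vz^0$ has a nonzero on one side, $\vz'$ is zero on the opposite side by construction, and so every linear combination $\alpha \vz^0 + \beta \vz'$ is zero on that opposite side; in pairs where $\vz^0$ vanishes, any linear combination equals $\beta \vz'$ in that pair, which is concise since $\vz'$ is. I do not anticipate a real obstacle — the only subtlety is realizing that ``linearly concise'' places a sidedness constraint (not just a value constraint) on $\vz'$ relative to $\vz^0$, which the construction respects pair-by-pair.
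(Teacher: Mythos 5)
Your construction is correct and is essentially the paper's own argument: in each opposite pair, shift the value so that the coefficient difference $z_i - z_{-i}$ sits on the side compatible with the support of $\vz^0$, which preserves equivalence and makes every combination $\alpha\vz^0 + \beta\vz'$ concise. In fact you are slightly more careful than the paper's one-line proof, which leaves $\vz$ untouched in pairs where $\vz^0$ vanishes and thus implicitly assumes $\vz$ is concise there, whereas your construction handles that corner case explicitly.
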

\begin{proof}
  For each $i$ such that $z_i \neq 0, z_{-i}^0 \neq 0$, subtract $z_i$
  from both $z_i$ and $z_{-i}$.  The result will be both equivalent to
  $\vz'$, and form a linearly concise set with $\vz^0$.
\end{proof}

\begin{lemma}
  \label{lem-equiv_basic}
  $\vz$ is a basic solution if and only if $\vz$ is concise, and each
  $\vz'$ that is concise and equivalent to $\vz$ is a basic solution.
\end{lemma}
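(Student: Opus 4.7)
The backward direction is trivial: $\vz$ itself is concise and equivalent to $\vz$, so the right-hand side of the biconditional immediately yields that $\vz$ is basic. For the forward direction, Lemma~\ref{lem-basic_sol_concise} hands me conciseness of $\vz$, and the substantive task is to show that any other concise $\vz'$ equivalent to $\vz$ is also basic. My plan is to argue by contrapositive via Theorem~\ref{thm-vk_active}: assuming $\vz'$ is not basic, I will build a $\vz^{K_0} \in \Ker(A) \setminus \{\vzero\}$ satisfying $z^{K_0}_i \neq 0 \implies z_i \neq 0$ for every $i$, contradicting basicity of $\vz$.

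Theorem~\ref{thm-vk_active} applied to $\vz'$ supplies some $\vz^K \in \Ker(A) \setminus \{\vzero\}$ with $z^K_i \neq 0 \implies z'_i \neq 0$. Since $\vz$ and $\vz'$ are concise with identical coefficients, they agree on exactly which pairs $\{i, -i\}$ carry a zero coefficient; for each pair with nonzero coefficient, each of $\vz, \vz'$ has exactly one of the two opposite entries nonzero, possibly on different sides. The block structure $A = [\,I\; -I\; -B\; B\,]$ makes the $i$-th and $(-i)$-th columns of $A$ exact negatives of one another, so each $e_i + e_{-i}$ lies in $\Ker(A)$. The strategy is to use these pair-swap kernel elements to realign $\vz^K$ with $\vz$: for every nonzero-coefficient pair where $\vz'$ and $\vz$ place their nonzero entry at opposite indices, I add the scalar multiple of $e_i + e_{-i}$ needed to cancel whatever $\vz^K$ carries at the $\vz'$-side and deposit it on the $\vz$-side. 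Pairs where $\vz$ and $\vz'$ agree on the active side, and pairs with zero coefficient (on which $\vz^K$ is forced to vanish), require no correction. The resulting $\vz^{K_0}$ lies in $\Ker(A)$ and its nonzero entries sit only at positions where $\vz$ is nonzero, as required.

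The main obstacle I anticipate is verifying $\vz^{K_0} \neq \vzero$: a priori the pair-swap corrections could wipe out the only nonzero entries of $\vz^K$. This does not happen because the $\vz'$-active and $\vz$-active indices in an opposite-sides pair are disjoint, and $\vz^K$ is zero on the $\vz$-active side before the correction (being supported in $\vz'$), so adding the correction installs a new nonzero entry there without cancellation. For same-side pairs, the entry is simply preserved. Since $\vz^K \neq \vzero$, at least one $z^K_j$ is nonzero at a $\vz'$-active position, and this translates into a nonzero entry of $\vz^{K_0}$ at the corresponding $\vz$-active position. Feeding $\vz^{K_0}$ back into Theorem~\ref{thm-vk_active} then contradicts $\vz$ being basic, completing the proof.
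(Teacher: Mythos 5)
Your proof is correct, but it runs along a different track than the paper's. The paper argues directly from the definition of a basic solution: it transforms $\vz$ into $\vz'$ pair by pair (subtracting $z_i$ from both $z_i$ and $z_{-i}$), observes that each such step deactivates the constraint $z_{-i}\ge 0$ and activates $z_i\ge 0$, and asserts that swapping $z_i\ge 0$ for $z_{-i}\ge 0$ in any linearly independent active set preserves linear independence; the converse is dispatched with ``similar logic.'' You instead route everything through Theorem~\ref{thm-vk_active}: you transport a kernel certificate of non-basicity from $\vz'$ to $\vz$ by adding multiples of the pair-swap kernel vectors $\ve_i+\ve_{-i}$ (which lie in $\Ker(A)$ because columns $i$ and $-i$ of $A=[\,I\;{-I}\;{-B}\;B\,]$ are negatives), and you verify explicitly that the transported element is nonzero and supported inside the support of $\vz$. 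These are dual views of the same structural fact, but your version is the more fully checked one: the non-cancellation argument you give is exactly the verification the paper leaves implicit in its linear-independence claim, and your observation that the backward direction is trivial (take $\vz'=\vz$) is cleaner than the paper's appeal to symmetry. The only small point worth adding is the (one-line) remark that $\vz'\in P_A$ whenever $\vz\in P_A$ and $\vz'$ is equivalent to $\vz$ --- since $A\vz$ depends only on the $p$- and $q$-coefficients, or equivalently since $\vz-\vz'$ is itself a combination of the vectors $\ve_i+\ve_{-i}$ --- which you need in order to invoke Theorem~\ref{thm-vk_active} at $\vz'$; you use this implicitly and it is immediate, so it is not a gap of substance.
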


\begin{proof}
  Assume $\vz$ is a basic solution.  Then by
  Lemma~\ref{lem-basic_sol_concise}, it is concise.  For any $\vz'$
  that is also concise and equivalent to $\vz$, we transform $\vz$ to
  $\vz'$ by taking each $z_i$ where $z_i \neq z_i'$, and subtracting
  $z_i$ from both $z_i$ and $z_{-i}$.  For each pair of subtractions,
  we make exactly one inequality constraint active, namely $z_i \geq
  0$, and exactly one inactive, $z_{-i} \geq 0$.  For any linearly
  independent set of constraints containing $z_i \geq 0$, if we
  replace this constraint with $z_{-i} \geq 0$, the result must again
  be a linearly independent set.

  If each $\vz'$ that is concise and equivalent to $\vz$ is a basic
  solution, and $\vz$ is concise, then a similar logic holds to show
  $\vz$ is a basic solution.
\end{proof}

\begin{corollary}
  \label{cor-equiv_vert}
  If $\vz$ is a basic solution, then there is a unique vertex $\vz'$
  that is equivalent to $\vz$.
\end{corollary}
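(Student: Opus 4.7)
The plan is to construct an explicit equivalent vertex from $\vz$ by fixing the sign choice at each coordinate pair, and then invoke Lemmas~\ref{lem-basic_sol_concise} and~\ref{lem-equiv_basic} to see that it must be basic, and that no other vertex can be equivalent to $\vz$.

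First, I would read off the $p$- and $q$-coefficients of $\vz$ and build $\vz'$ coordinate-pair by coordinate-pair. For each opposite pair $(z_i, z_{-i})$ of $\vz$, let $c$ denote the corresponding $p$- or $q$-coefficient, i.e., $c = z_i - z_{-i}$ (using Definition~\ref{def-p-coeffs}). Define $\vz'$ by setting $z'_i = c, z'_{-i} = 0$ if $c \geq 0$, and $z'_i = 0, z'_{-i} = -c$ if $c < 0$. By construction $\vz'$ is concise, satisfies $\vz' \geq \vzero$, and has the same $p$- and $q$-coefficients as $\vz$; in particular $\vz'$ is equivalent to $\vz$. Since the constraint $A\vz = \vc$ depends only on differences $z_i - z_{-i}$ through the structure of $A = \begin{bmatrix} I & -I & -B & B \end{bmatrix}$, $\vz'$ also lies in $P_A$, and combined with $\vz' \geq \vzero$ we get $\vz' \in P$.

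Next I would apply Lemma~\ref{lem-equiv_basic}: since $\vz$ is basic and $\vz'$ is concise and equivalent to $\vz$, $\vz'$ is also basic. Combined with $\vz' \in P$, this makes $\vz'$ a basic feasible solution, hence a vertex of $P$.

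For uniqueness, suppose $\vz''$ is any vertex equivalent to $\vz$. A vertex is basic, so by Lemma~\ref{lem-basic_sol_concise}, $\vz''$ is concise. Being a vertex also forces $\vz'' \geq \vzero$. But for each pair $(z''_i, z''_{-i})$, conciseness demands that one of the two is $0$, nonnegativity forces the surviving entry to have the same sign as the fixed $p$- (or $q$-)coefficient $c$, and equivalence to $\vz$ pins down its magnitude as $|c|$. Thus the coordinates of $\vz''$ are determined entry-by-entry exactly as those of $\vz'$, so $\vz'' = \vz'$.

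I do not anticipate a substantial obstacle: the real content has already been packaged into Lemmas~\ref{lem-basic_sol_concise} and~\ref{lem-equiv_basic}, and the only delicate point is verifying that the sign-forced concise representative is feasible and lies in $P_A$, which follows immediately from the block form of $A$.
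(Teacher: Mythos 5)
Your proposal is correct and follows essentially the same route as the paper: build the concise nonnegative representative of $\vz$'s equivalence class (your coefficient-by-coefficient construction coincides with the paper's ``subtract $z_i$ from both entries when $z_i<0$'' since $\vz$ is concise), use Lemma~\ref{lem-equiv_basic} to see it is basic, note feasibility to conclude it is a vertex, and pin down uniqueness from conciseness, nonnegativity, and equivalence. Your uniqueness argument just spells out more explicitly what the paper's last sentence asserts.
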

\begin{proof}
  For each $i$ with $z_i < 0$, subtract $z_i$ from both $z_i$ and
  $z_{-i}$.  The result will be both concise and equivalent to $\vz$,
  and so by Lemma~\ref{lem-equiv_basic} will be a basic solution.  It
  will also be nonnegative, and therefore a vertex.  Also for each
  $i$, there is one value we may add to both $z_i$ and $z_{-i}$ such
  that the result will be a vertex and equivalent to $\vz$.
\end{proof}

\begin{corollary}
  \label{cor-int_iff_int}
  Let $\vz$ be concise.  Then $\vz$ is integral if and only if each
  $\vz'$ that is concise and equivalent to $\vz$ is integral.
\end{corollary}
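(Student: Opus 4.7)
The reverse direction is immediate, since $\vz$ is concise and equivalent to itself, so if every concise $\vz'$ equivalent to $\vz$ is integral, then in particular $\vz$ is integral. The substantive direction is the forward one, and I expect it to follow directly from unpacking the definitions of concise and equivalent.

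The plan is to reason coordinate-pair by coordinate-pair. Fix an index $i$, and consider the pair of opposite entries $z_i, z_{-i}$ and the associated $p$-coefficient (or $q$-coefficient, by identical reasoning) $\gamma_i = z_i - z_{-i}$. Because $\vz$ is concise in entry $i$, at least one of $z_i, z_{-i}$ is zero, and therefore $\gamma_i$ is either $z_i$, $-z_{-i}$, or $0$. Assuming $\vz$ is integral, $\gamma_i \in \Z$ for every $i$.

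Now let $\vz'$ be any concise vector equivalent to $\vz$; by Definition~\ref{def-p-coeffs} and the paragraph following it, $\vz'$ has the same $p$- and $q$-coefficients $\gamma_i$ as $\vz$. Since $\vz'$ is concise in entry $i$, the pair $z_i', z_{-i}'$ falls into exactly one of three cases: either $z_{-i}' = 0$, which forces $z_i' = \gamma_i$; or $z_i' = 0$, which forces $z_{-i}' = -\gamma_i$; or both are $0$, which forces $\gamma_i = 0$. In each case both entries lie in $\{0, \gamma_i, -\gamma_i\} \subset \Z$, so $\vz'$ is integral at the pair $(i,-i)$. Since $i$ was arbitrary, $\vz'$ is integral.

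I do not anticipate any real obstacle: the whole content is that conciseness pins the entry values down to $0$ or $\pm\gamma_i$, and equivalence preserves the $\gamma_i$. The only care required is to handle the sign ambiguity (the zero entry of the pair may switch between $\vz$ and $\vz'$), which the three-case split above addresses.
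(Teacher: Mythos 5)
Your proof is correct and rests on the same elementary observation as the paper's: a concise vector is pinned down, pair by opposite-entry pair, to the values $0$ or $\pm$ the (integral) $p$- or $q$-coefficient, the paper phrasing this via the integer-shift transformation of Lemma~\ref{lem-equiv_basic} and closure of $\Z$ under addition rather than your explicit three-case split. No gap; nothing further needed.
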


\begin{proof}
  We transform $\vz$ to $\vz'$ using the same method as in
  Lemma~\ref{lem-equiv_basic}.  The result holds by the closure of
  integers under addition.
\end{proof}

\section{Fractional Solutions to the OHCP LP, and Elementary Chains}
\label{sec-Elemch}
Consider the special case of OHCP where the input chain $\vc$ is the
{\em elementary} $p$-chain $\ve_i$ for some $i \leq m$, i.e., the
$i\ord$ $p$-simplex has a coefficient of $1$ in the chain while all
other entries are zero.  We refer to this instance of the OHCP LP as
OHCP$_i$, and its feasible region as $P_i$. We analyze the
relationship between the existence of nonintegral values in basic
solutions of the OHCP LP, and the existence of nonintegral values in a
basic solution of OHCP$_i$ for some $i$.
\begin{lemma}
  \label{lem-scalar_mult_basic}
  Let $\alpha \in \left(\R \setminus \{0\}\right)$.  $\vz$ is a basic
  solution of the OHCP LP $O$ with input chain $\vc$ if and only if
  $\alpha\vz$ is a basic solution of the OHCP LP $O^\prime$ with input
  chain $\alpha\vc$.
\end{lemma}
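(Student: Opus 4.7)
The plan is to reduce the claim directly to Theorem~\ref{thm-vk_active}. The central observation is that the constraint matrix $A = [\,I\ -I\ -B\ B\,]$ of the OHCP LP depends only on the simplicial complex $K$, not on the input chain; thus $O$ and $O^\prime$ share the same $A$ (and hence the same $\Ker(A)$), differing only in their right-hand sides, $\vc$ and $\alpha\vc$. Since $A\vz = \vc$ holds if and only if $A(\alpha\vz) = \alpha\vc$ holds, the map $\vz \mapsto \alpha\vz$ is a bijection between the equality-hyperplane $P_A$ for $O$ and the analogous hyperplane for $O^\prime$, with inverse $\vz \mapsto \alpha^{-1}\vz$.

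Next I would apply Theorem~\ref{thm-vk_active}: $\vz \in P_A$ is a basic solution of $O$ if and only if for every $\vz^K \in \Ker(A) \setminus \{\vzero\}$ there exists an index $i$ with $z_i = 0$ and $z_i^K \neq 0$. Since $\alpha \neq 0$, the set of zero coordinates of $\alpha\vz$ coincides with that of $\vz$, so the same index $i$ witnesses the characterization for $\alpha\vz$ with respect to $O^\prime$ (noting that $\Ker(A)$ is common to both). This gives the forward implication, and the reverse follows by repeating the argument with $\alpha$ replaced by $\alpha^{-1}$ (using $\vz = \alpha^{-1}(\alpha\vz)$ and $\vc = \alpha^{-1}(\alpha\vc)$).

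There is essentially no substantive obstacle here; the only point worth flagging is that in this paper a \emph{basic solution} of the OHCP LP is an element of $P_A$ satisfying the zero-pattern condition of Theorem~\ref{thm-vk_active}, and does \emph{not} require $\vz \ge \vzero$ (nonnegativity is what separates basic solutions from vertices, cf.~Corollary~\ref{cor-equiv_vert}). Hence the scaling argument is valid for every nonzero $\alpha$, positive or negative, even when $\alpha\vz$ fails the sign constraints that $\vz$ satisfies.
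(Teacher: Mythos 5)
Your proof is correct and takes essentially the same route as the paper: the decisive observation in both is that multiplying by a nonzero $\alpha$ leaves the zero pattern (hence the active inequality constraints) unchanged, while scaling the right-hand side along with the solution keeps the equality constraints satisfied. The paper phrases this directly in terms of the set of active constraints from the definition of a basic solution, whereas you route the identical observation through the kernel characterization of Theorem~\ref{thm-vk_active}; the two are interchangeable here, and your closing remark that basic solutions need not be nonnegative (feasibility being what distinguishes vertices) is consistent with the paper's usage.
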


\begin{proof}
  If $\alpha \neq 0$, then multiplying $\vz$ by $\alpha$ does not
  change which entries are nonzero.  Therefore the set of active
  inequality constraints are the same at $\vz$ and $\alpha\vz$.  And
  if we multiply both a solution and the input chain by the same
  scalar, the set of active equality constraints cannot change.
  Therefore the set of active constraints is the same for $\vz$ in $O$
  and for $\alpha\vz$ in $O^\prime$.  Therefore $\vz$ is a basic
  solution to $O$ if and only if $\alpha\vz$ is a basic solution to
  $O^\prime$.
\end{proof}

\begin{theorem}
  \label{thm-if_v_then_V}
  Let $\vz \in P_A$ be a basic solution to the OHCP LP, with $\{\vz,
  \vz^I\}$ linearly concise. There exists some matrix $Z$ such that
  the columns of $Z$ form a linearly concise set, each column $Z_i$ of
  $Z$ is a basic solution in $(P_i)_A$, and $Z\vc = \vz$.
\end{theorem}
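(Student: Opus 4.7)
The plan is to build $Z$ by direct linear algebra from the support structure of $\vz$. Since $\vz$ is a basic solution, Lemma~\ref{lem-basic_sol_concise} gives that $\vz$ is concise, so its support $J_1 := \{j : z_j \neq 0\}$ contains no opposite pair of indices. Applying Theorem~\ref{thm-vk_active} to kernel vectors supported in $J_1$, the columns of $A$ indexed by $J_1$ must be linearly independent, and hence $|J_1| \leq m$.

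The central step is to extend $J_1$ to an index set $J^\ast \supseteq J_1$ with $|J^\ast| = m$ such that (a) the submatrix $A_{J^\ast}$ formed by the columns of $A$ indexed by $J^\ast$ is an invertible $m \times m$ matrix, and (b) $J^\ast$ still contains no opposite pair. Both conditions can be met because $A$ has full row rank $m$ (its leading $I, -I$ blocks guarantee this), while any two opposite columns $A_k$ and $A_{-k}$ differ only by sign and therefore span the same line. Consequently, any row basis of $A$ can be chosen in ``compressed'' form, taking at most one column from each opposite pair, and $J_1$ can be extended greedily within that restriction.

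Given such a $J^\ast$, define the $i$-th column of $Z$ by $(Z_i)_{J^\ast} := A_{J^\ast}^{-1} \ve_i$ and $(Z_i)_k := 0$ for $k \notin J^\ast$. Then $A Z_i = \ve_i$, placing $Z_i$ in $(P_i)_A$. Invertibility of $A_{J^\ast}$ forces every nonzero element of $\Ker(A)$ to have a nonzero entry outside $J^\ast$, where $Z_i$ vanishes, so Theorem~\ref{thm-vk_active} shows $Z_i$ is a basic solution. For the identity $Z\vc = \vz$: both sides are supported in $J^\ast$ (the right side because $J_1 \subseteq J^\ast$), and both evaluate to $\vc$ under the invertible $A_{J^\ast}$, so they coincide. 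Linear conciseness of the columns $Z_1, \ldots, Z_m$ is then automatic, since every linear combination of the $Z_i$ is supported in $J^\ast$, which contains no opposite pair, so the combination is concise.

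The main obstacle is arranging both (a) and (b) for $J^\ast$ simultaneously. The structural observation that opposite columns of $A$ share a span makes this obstacle mild --- it essentially reduces to choosing a basis among ``merged'' columns that extends those already selected by $J_1$. Without this symmetry, preserving conciseness of arbitrary linear combinations of the $Z_i$'s would require a more delicate argument tracking sign choices on each opposite pair.
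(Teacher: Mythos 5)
Your construction is correct, and it proves the stated theorem by a genuinely different route than the paper. You use the classical LP basis argument: by Theorem~\ref{thm-vk_active}, the columns of $A$ on the support of the basic solution $\vz$ are linearly independent; since opposite columns of $A = \begin{bmatrix} I & -I & -B & B\end{bmatrix}$ are negatives of each other, a greedy extension to a nonsingular $m\times m$ basis $A_{J^\ast}$ automatically avoids opposite pairs, and then $Z_i := A_{J^\ast}^{-1}\ve_i$ (padded with zeros) gives basic solutions of each OHCP$_i$ in $(P_i)_A$, with $Z\vc=\vz$ by uniqueness of the solution supported in $J^\ast$, and linear conciseness for free since all columns live on $J^\ast$. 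The paper instead builds $Z$ algorithmically: it starts each column at (a scaled copy of) the identity solution of OHCP$_i$ and repeatedly peels pieces of $\vz-\vz^I$ off using the decomposition of Theorem~\ref{thm-v_plus_vk_basic} guided by Lemma~\ref{lem-last_OHCP_term_cancels}. Your argument is shorter and more transparent, but the paper's construction delivers extra structure that the statement does not record yet later results quote from the proof rather than from the theorem: in Lemma~\ref{lem-frac_basic} and Lemma~\ref{lem-all_y_fract} each column is taken to be of the form $\left(\vz^I\right)_{\pm i} + \frac{1}{c_i}\vz^C$ with $\vz^C \neq \vzero \implies c_i \neq 0$, and in Theorem~\ref{thm_frac_vert_X} Property~\ref{cnd-v_contains_vr} of Theorem~\ref{thm-v_plus_vk_basic} is invoked to conclude that nonzero $p$-coefficients of a column are nonzero in $\vz$. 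Your $Z$ need not satisfy any of this (e.g., when neither $x$-column of simplex $i$ lands in $J^\ast$, your $Z_i$ is not the identity solution, and its $p$-support can leave the support of $\vz$), which is also reflected in the fact that you never use the hypothesis that $\{\vz,\vz^I\}$ is linearly concise. So as a proof of the literal statement your argument stands; as a replacement for the paper's proof it would force the downstream lemmas to be reworked or the theorem statement to be strengthened.
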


\begin{proof}
  Assume $\vz$ is a basic solution in $P_A$.  Begin constructing $Z$
  by first setting each column $Z_i$ to the vector that is linearly
  concise with $\vz^I$, and equivalent to the identity solution of
  OHCP$_i$.  Then $Z\vc = \vz^I$.  By Theorem~\ref{thm-vk_active} and
  Lemma~\ref{lem-vk_equiv_bi}, the identity solution to any OHCP LP is
  a basic solution.  So by Lemma~\ref{lem-equiv_basic}, each $Z_i$ is
  a basic solution to OHCP$_i$ in $(P_i)_A$.  Also, by Lemma
  \ref{lem-scalar_mult_basic}, $c_iZ_i$ is a basic solution to the
  OHCP LP with input chain $c_i\ve_i$.

  Let $\vz^K = \vz - \vz^I$. If $\vz^K \neq \vzero$, distribute it
  among the columns of $Z$ according to the following algorithm.
  \begin{enumerate}
  \item Let $\vz^0$ = $\vz^I$.
  \item \label{stp-choose_i} Since $\{\vz, \vz^0\}$ is linearly
    concise, $\vz^K$ is concise. By
    Lemma~\ref{lem-last_OHCP_term_cancels}, there is some
    $x$-coordinate $i$ such that $z_i^0 \neq 0, z_i= 0$.  It must also
    be true that $c_{i'} \neq 0$ where $i'$ is either $i$ or $-i$.
    Note that $Z_{i'}$ is equal to $\begin{bmatrix} \ve_{i'}^T &
      \vzero^T & \vzero^T & \vzero^T
    \end{bmatrix}^T$.
  \item IF $c_{i'}Z_{i'} + \vz^K$ is not a solution to the OHCP LP
    with input chain $c_{i'}\ve_{i'}$, THEN
    \begin{enumerate}
    \item Construct $\vz^C$ and $\vz^D$ according to the algorithm in
      Theorem~\ref{thm-v_plus_vk_basic}.  Note that $\vz^0$ and
      $\vz^K$ represent the same vectors in
      Theorem~\ref{thm-v_plus_vk_basic} as they do here.  However, the
      difference between $\vz$ of this Lemma and $\vz$ of
      Theorem~\ \ref{thm-v_plus_vk_basic} is $\vz^I - c_{i'}Z_{i'}$.
    \item $\vz^C$ cannot be $\vzero$.  Otherwise, $\vz^D$ would be
      $\vz^K$, and so by our choice of $i$ in step~\ref{stp-choose_i},
      would not satisfy Property~\ref{cnd-v_contains_vr} of
      Theorem~\ref{thm-v_plus_vk_basic}.  Also recall from
      Theorem~\ref{thm-v_plus_vk_basic} that all vectors referred to
      in this algorithm form a linearly concise set.  Add
      $\frac{1}{c_{i'}}\vz^C$ to $Z_{i'}$.
    \item Set $\vz^K := \vz^D$.  By Property~\ref{cnd-vr_nz} of
      Theorem~\ref{thm-v_plus_vk_basic}, we still have $\vz^K \neq 0$.
    \item Set $\vz^0 := \vz^0 + \vz^C$.  $\{\vz, \vz^0\}$ is
      still linearly concise.  And because $\vz^C$ satisfies
      Property~\ref{cnd-no_y_elim} of
      Theorem~\ref{thm-v_plus_vk_basic}, we still have that for each
      $y$-coordinate $j, z_j^0 \neq 0 \implies z_j \neq 0$.  Also, by
      Property~\ref{cnd-v_contains_vr}, for any $x$-coordinate $i$
      where $z_i^0 \neq 0$ and $z_i = 0, c_{i'} \neq 0$ where $i'$ is
      either $i$ or $-i$.  This is because the only $x$-coefficients
      that are not zero in the $\vz$ of
      Theorem~\ref{thm-v_plus_vk_basic}, but are zero in the $\vz$ of
      this theorem must be nonzero in $\vz^I$.
    \item LOOP to Step~\ref{stp-choose_i}. Note that the next $i$
      chosen in Step~\ref{stp-choose_i} cannot be the same as any
      previous $i$s chosen, since by
      Lemma~\ref{lem-last_OHCP_term_cancels}, and our previous choices
      of $i$, $z_i^0$ must be 0 for each previous $i$.
    \end{enumerate}
  \item Add $\frac{1}{c_{i'}}\vz^K$ to $Z_{i'}$.
  \item STOP.
  \end{enumerate}
  Because each $\vz^C$ is nonzero, and $K$ is finite, this algorithm
  must terminate, giving us the desired $Z$.
\end{proof}

Consider the case of $p = 1$. Because a basic solution $\vz$ cannot be
decomposed as in Theorem~\ref{thm-v_plus_vk_basic} the set of
triangles of $\vz$ form a union of 2D spaces each of which must be
connected to at least one edge $\tau$ of the input chain $\vc$. Then
each of these 2D spaces must be a basic solution to the OHCP LP where
$\tau$ has the only nonzero coefficient in the input chain $\vc_\tau$,
and the coefficient of $\tau$ is the same in $\vc$ and $\vc_\tau$.  By
Lemma~\ref{lem-scalar_mult_basic}, we may scale each of these basic
solutions as necessary to be basic solutions of elementary
chains. These basic solutions of elementary chains, adjusted as
necessary to form a linearly concise set, are columns of $Z$.  Note
that the construction of $Z$ may not be unique, and many of the
columns of $Z$ may be equivalent to identity solutions.

We may refer to Figure~\ref{fig-Basic_Sol_Ex_1} and think of the edges
of the last input chain as three different input chains consisting
individually of $ab, bc$, and $ca$. We may then decompose the basic
solution shown into solutions equivalent to identity solutions for
$bc$ and $ca$, and a basic solution to $ab$.
\begin{lemma}
  \label{lem-frac_basic}
  For a given complex $K$, there is an OHCP LP with integral input
  chain $\vc$ that has a nonintegral basic solution if and only if
  there is some $i$ such that the $i\ord$ coefficient is nonzero in
  $\vc$, and OHCP$_i$ has a nonintegral basic solution.
\end{lemma}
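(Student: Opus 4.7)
The plan is to prove the two directions of the biconditional separately, with the reverse essentially immediate and the forward reducing to Theorem~\ref{thm-if_v_then_V} plus an integrality pigeonhole. For the reverse direction, given an index $i$ for which OHCP$_i$ admits a nonintegral basic solution, I take $\vc := \ve_i$: this is an integral input chain with $c_i = 1 \neq 0$, and the OHCP LP for $\vc$ is precisely OHCP$_i$, which by hypothesis has a nonintegral basic solution.

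For the forward direction, suppose the OHCP LP for an integral input chain $\vc$ has a nonintegral basic solution $\vz$. To apply Theorem~\ref{thm-if_v_then_V}, I first need $\{\vz, \vz^I\}$ to be linearly concise. Lemma~\ref{lem-equiv_coll_conc} produces an equivalent $\vz'$ with $\{\vz', \vz^I\}$ linearly concise (in particular $\vz'$ itself is concise, and $\vz$ is concise by Lemma~\ref{lem-basic_sol_concise}). Lemma~\ref{lem-equiv_basic} then promotes $\vz'$ to a basic solution, and Corollary~\ref{cor-int_iff_int} ensures $\vz'$ remains nonintegral; since $\vz'$ has the same $p$- and $q$-coefficients as $\vz$, we have $\vz' \in P_A$.

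Now I apply Theorem~\ref{thm-if_v_then_V} to produce a matrix $Z$ whose columns form a linearly concise set, each column $Z_i$ is a basic solution in $(P_i)_A$, and $Z\vc = \vz'$. The crux is a short integrality pigeonhole: if every column $Z_j$ with $c_j \neq 0$ were integral, then $Z\vc = \sum_j c_j Z_j$ would be an integer combination of integer vectors, hence integral, contradicting the nonintegrality of $\vz'$. So some column $Z_j$ with $c_j \neq 0$ must be nonintegral, and being a basic solution of OHCP$_j$, it witnesses the desired conclusion. I expect the main work to be in the preliminary adjustment for linear conciseness while preserving both basicness and nonintegrality; once those hypotheses are lined up, the integrality argument itself is essentially a one-line computation.
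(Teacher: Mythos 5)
Your proof is correct and follows essentially the same route as the paper: apply Theorem~\ref{thm-if_v_then_V} to write the nonintegral basic solution as $Z\vc$ with each column a basic solution of some OHCP$_i$, then use an integrality pigeonhole to find a nonintegral column $Z_j$ with $c_j \neq 0$, and take $\vc = \ve_i$ for the reverse direction. Your preliminary adjustment via Lemma~\ref{lem-equiv_coll_conc}, Lemma~\ref{lem-equiv_basic}, and Corollary~\ref{cor-int_iff_int} to secure the linear-conciseness hypothesis of Theorem~\ref{thm-if_v_then_V} is a sound (and slightly more careful) handling of a step the paper's proof leaves implicit.
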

\begin{proof}
  If there is some $i$ such that OHCP$_i$ has a nonintegral basic
  solution, then $\ve_i$ is an integral input chain that has a
  nonintegral basic solution.  If $\vz$ is a nonintegral basic
  solution to the OHCP LP with integral input chain $\vc$, then by
  Theorem~\ref{thm-if_v_then_V}, $\vz$ is the sum of $n$ vectors each
  of the form $c_i\left(\vz^I\right)_{\pm i} + \vz^C$, where $\vz^C
  \neq \vzero \implies c_i \neq 0$.  Since integers are closed under
  addition, one of these terms must be nonintegral.  Since $c_i$ and
  $\left(\vz^I\right)_{\pm i}$ both must be integral, there must be
  some $\vz^C$ that is nonintegral.  Therefore some
  $\frac{1}{c_i}\vz^C$ is also nonintegral, and so
  $\left(\vz^I\right)_{\pm i} + \frac{1}{c_i}\vz^C$ is nonintegral.
  By Theorem~\ref{thm-if_v_then_V}, each $\left(\vz^I\right)_{\pm i} +
  \frac{1}{c_i}\vz^C$ is a basic solution to OHCP$_i$.  Therefore some
  $\left(\vz^I\right)_{\pm i} + \frac{1}{c_i}\vz^C$ is a nonintegral
  basic solution to OHCP$_i$.  Furthermore, the $i\ord$ coefficient of
  $\vc$ must be nonzero, otherwise this sum is undefined.
\end{proof}
\begin{lemma}
  \label{lem-comm_sol}
  Let $\vz^0$ be concise and be in the hyperplane $P_A$ of the OHCP
  $O$ with input chain $\vc$, with $\vz$ a basic solution to the same
  OHCP in $P_A$ where for each $y$-coordinate $j, z_j = 0 \implies
  z_j^0 = 0$.  Let $\vy^0$ be the $2(m+n)$-vector with all
  $y$-coefficients equal to those of $\mathbf{z}^0$, and all
  $x$-coefficients 0.  Then $\vz - \vy^0$ is a basic solution of the
  OHCP $O^0$ with input chain $\vc^0$ in $(P_A)^0$ where
  $[\left(\vc^0\right)^T \vzero^T]^T$ is equivalent in all
  $x$-coordinates to $\vz^0$.
\end{lemma}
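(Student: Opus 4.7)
The plan is to verify two separate claims: that $\vz - \vy^0$ lies in the hyperplane $(P_A)^0$ of $O^0$, and that it is a basic solution there, via the kernel characterization of Theorem~\ref{thm-vk_active}.

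For the hyperplane claim, I would split $\vz^0 = \vx^0 + \vy^0$, where $\vx^0$ is the vector that agrees with $\vz^0$ on its $x$-entries and is zero on its $y$-entries. Using the block form $A = [I, -I, -B, B]$, the product $A\vx^0$ is exactly the $p$-coefficient vector of $\vz^0$, which by definition equals $\vc^0$. From $A\vz^0 = \vc$ this yields $A\vy^0 = \vc - \vc^0$, and combining with $A\vz = \vc$ gives $A(\vz - \vy^0) = \vc^0$, placing $\vz - \vy^0$ in $(P_A)^0$.

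For the basic solution claim, the essential observation is that $O$ and $O^0$ share the same constraint matrix $A$, so $\Ker(A)$ is the same for both problems. Given any nonzero $\vz^K \in \Ker(A)$, Theorem~\ref{thm-vk_active} applied to the basic solution $\vz$ in $P_A$ produces an index $i$ with $z_i = 0$ and $z_i^K \neq 0$. I claim this same index witnesses basicness of $\vz - \vy^0$ in $(P_A)^0$. If $i$ is an $x$-entry, this is immediate because $\vy^0$ is zero on $x$-entries. If $i$ is a $y$-entry with $z_i = 0$, the hypothesis forces $z_i^0 = 0$, and since $\vy^0$ agrees with $\vz^0$ on $y$-entries, $y_i^0 = 0$ as well; hence $(\vz - \vy^0)_i = 0$. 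A second application of Theorem~\ref{thm-vk_active}, this time to $O^0$, completes the argument.

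I do not anticipate any serious obstacle; the proof is essentially algebraic bookkeeping combined with the kernel characterization. The one delicate point is ensuring that the witnessing index carried over from $\vz$ remains a zero coordinate of $\vz - \vy^0$, and this is precisely where the hypothesis that zeros among the $y$-coordinates of $\vz$ are inherited by $\vz^0$ is indispensable; without it, subtracting $\vy^0$ could introduce a nonzero value at the very coordinate we rely on.
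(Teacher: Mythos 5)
Your proof is correct and follows essentially the same route as the paper's: the membership claim is the same computation the paper expresses via equivalence with the identity solution of $O^0$, and the basicness claim is exactly the paper's argument (same kernel $\Ker(A)$, agreement of $\vz$ and $\vz-\vy^0$ on $x$-entries, and the implication $z_j=0\implies z_j^0=0\implies y_j^0=0$ on $y$-entries), merely stated directly via Theorem~\ref{thm-vk_active} rather than by contrapositive.
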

\begin{proof}
  Since $\vz$ and $\vz^0$ are both in $P_A$, the difference between
  them is in $\Ker(A)$.  Since $[\left(\mathbf{c}^0\right)^T
    \vzero^T]^T$ and $\vz^0$ are equivalent in all $x$-coordinates,
  $\vz^0 - \vy^0$ is equivalent to the identity solution for $O^0$.
  Therefore $\vz^0 - \vy^0$ and $\vz - \vy^0$ are both in $(P_A)^0$.

  We show that $\vz - \vy^0$ is a basic solution of $O^0$ by
  contrapositive.  Let $\vz^*$ represent $\vz - \vy^0$.  Suppose
  $\vz^*$ is not a basic solution of $O^0$.  Then by
  Theorem~\ref{thm-vk_active}, $\exists \vz^K \in \left(\Ker(A)
  \setminus \{\vzero\}\right)$ where $z_i^* = 0 \implies z_i^K =
  0.\ \vz$ and $\vz^*$ agree in all $x$-coordinates.  We also have for
  each $y$-coordinate $j, z_j = 0 \implies z_j^0 = 0 \implies
  y_j^0 = 0 \implies z_j^* = 0 \implies z_j^K = 0$.  Therefore we
  have $z_i = 0 \implies z_i^K = 0$ for all $i$.  Therefore by
  Theorem~\ref{thm-vk_active}, $\vz$ is not a basic solution of $O$.
\end{proof}

For $p = 1$, due to Corollary~\ref{cor-equiv_vert},
Lemma~\ref{lem-comm_sol} is saying that if we have a set of edges
$\vx$ that is a vertex for some OHCP LP with input chain $\vc$, and we
transform $\vc$ to $\vx$ by adding triangles one at a time, or at
least not eliminating triangles previously added, then $\vx$ will be a
vertex to any OHCP LP that has as input any of these intermediate edge
sets we get at each step of this transformation.
\begin{lemma}
  \label{lem-all_y_fract}
  For a given complex $K$, there is an OHCP LP with integral input
  chain $\vc$ that has a nonintegral basic solution if and only if
  there is some $i$ such that OHCP$_i$ has a basic solution where all
  $y$-coordinates that are nonzero are nonintegral.
\end{lemma}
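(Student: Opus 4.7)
The backward direction is immediate. Suppose some OHCP$_i$ has a basic solution $\vz$ in which every nonzero $y$-coordinate is nonintegral. At least one $y$-coordinate must be nonzero (otherwise $\vz$ would be the identity solution, which is integral), so $\vz$ itself is nonintegral, and OHCP$_i$ is an OHCP LP with integral input chain $\ve_i$.

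For the forward direction, I would first apply Lemma~\ref{lem-frac_basic} to obtain some index $i$ for which OHCP$_i$ admits a nonintegral basic solution $\vz$. Let $I$ be the set of $y$-coordinate indices $j$ where $z_j$ is a nonzero integer. If $I=\emptyset$, then $\vz$ already satisfies the claim. Otherwise, I would use Lemma~\ref{lem-comm_sol} to extract the integer part of the $y$-coordinates: let $\vz^0$ be the unique concise element of $P_A$ whose $y$-coordinates equal those of $\vz$ on $I$ and vanish elsewhere, and let $\vy^0$ be its $y$-only part. The hypotheses of Lemma~\ref{lem-comm_sol} are satisfied (in particular $z_j=0 \implies z_j^0=0$ for each $y$-coordinate $j$, since the only positions where $z_j^0\neq 0$ lie in $I$), so $\vz-\vy^0$ is a basic solution of an OHCP $O^0$ whose input chain $\vc^0$ is equivalent in $x$-coordinates to $\vz^0$. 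Because $\vy_0$ is integral and $B$ is integer, $\vc^0$ is an integer chain; by construction $\vz-\vy^0$ has no integer nonzero $y$-coordinate and is still nonintegral (it inherits the fractional entries from $\vz$).

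If $\vc^0$ has a single nonzero component, then after the obvious scaling and invocation of Lemma~\ref{lem-scalar_mult_basic} we obtain a basic solution of some OHCP$_j$ with the desired property. Otherwise I would apply Theorem~\ref{thm-if_v_then_V} to the basic solution $\vz-\vy^0$ and decompose it as $\vz-\vy^0 = \sum_j c^0_j Z_j$, where the columns $Z_j$ form a linearly concise collection of basic solutions of the various OHCP$_j$. Since $\vz-\vy^0$ is nonintegral and the $c^0_j$ are integers, at least one column $Z_j$ must be nonintegral.

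The main obstacle is showing that in this decomposition one may select a nonintegral $Z_j$ that retains the ``no integer nonzero $y$-coordinate'' property. For any $y$-index $m$, the relation $\sum_j c^0_j (Z_j)_m=(\vz-\vy^0)_m$ forces at least one $(Z_j)_m$ to be fractional whenever the right-hand side is fractional; the trouble is that at indices $m$ where $(\vz-\vy^0)_m=0$, the values $(Z_j)_m$ could be nonzero integers that cancel across $j$. To resolve this, I would iterate: among all pairs $(j,Z_j)$ produced by this process in which $Z_j$ is a nonintegral basic solution of OHCP$_j$, choose one minimizing the number of integer nonzero $y$-coordinates, and re-apply the extraction of Lemma~\ref{lem-comm_sol} followed by the decomposition of Theorem~\ref{thm-if_v_then_V}. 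Each iteration strictly decreases the lexicographic measure (number of integer nonzero $y$-coordinates, number of nonzero $y$-coordinates) of the selected basic solution, because the immediate output of extraction has strictly fewer integer $y$-coordinates and any subsequent column $Z_j$ that matched or exceeded the previous count would contradict the minimality used to make the selection. Since the set of basic solutions across all OHCP$_j$ is finite, the iteration must terminate in a basic solution of some OHCP$_j$ with no integer nonzero $y$-coordinate, completing the forward direction.
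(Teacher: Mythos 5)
Your overall route is the same as the paper's: reduce to an elementary input chain via Lemma~\ref{lem-frac_basic}, strip the integral part of the $y$-coordinates with Lemma~\ref{lem-comm_sol}, re-decompose with Theorem~\ref{thm-if_v_then_V}, and iterate. The genuine gap is exactly at the point you identify as the main obstacle: the termination argument does not hold as stated. Your measure is the pair (number of nonzero integer $y$-coordinates, number of nonzero $y$-coordinates) of the currently selected basic solution, and you justify its lexicographic decrease by saying that a new column matching or exceeding the previous count ``would contradict the minimality used to make the selection.'' But that minimality is only over the columns of the \emph{current} decomposition; it gives no comparison between the new minimum and the previous iteration's count. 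As you yourself observe, cancellation across columns (with integer multipliers $c^0_j$) can create nonzero integer $y$-entries in an individual $Z_j$ at positions where $\vz-\vy^0$ vanishes; hence the first component of your measure can increase after a full round, and, using only the stated conclusions of Theorem~\ref{thm-if_v_then_V} (linear conciseness, each column basic, $Z\vc=\vz$), so can the second. No strictly decreasing quantity has actually been exhibited, so the iteration is not shown to terminate.

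The missing ingredient --- and what the paper's proof relies on --- is a support-containment property of the particular decomposition constructed in Theorem~\ref{thm-if_v_then_V}: because the construction accumulates pieces $\vz^C$ satisfying Property~\ref{cnd-no_y_elim} of Theorem~\ref{thm-v_plus_vk_basic}, every nonzero $y$-coordinate of each column is nonzero in the solution being decomposed, so the decomposition never enlarges the nonzero $y$-support. With that in hand the single measure ``number of nonzero $y$-coordinates of the current basic solution'' suffices: each stripping step removes a nonempty set $\mathscr{J}$ of integer nonzero $y$-coordinates from that support while the subsequent decomposition does not add to it, so the count strictly drops each round and the process terminates; this is precisely the paper's argument, with your strip/decompose steps performed in the opposite order. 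The remainder of your write-up --- the backward direction (under the intended reading that the solution has at least one nonzero, hence nonintegral, $y$-coordinate), the integrality of $\vc^0$, and the preservation of nonintegrality under stripping and under the scaling of Lemma~\ref{lem-scalar_mult_basic} --- is fine.
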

\begin{proof}
  Let $\vz$ be a basic solution with nonintegral coefficients for an
  OHCP LP with integral input chain $\vc$.  By
  Theorem~\ref{thm-if_v_then_V}, there is some matrix $Z$ such that
  the columns of $Z$ form a linearly concise set, each column $Z_i$ of
  $Z$ is a basic solution in $(P_i)_A$, and $Z\vc = \vz$.  By a
  similar logic as Lemma~\ref{lem-frac_basic}, one of these columns is
  nonintegral, and this column is of the form $\left(\vz^I\right)_{\pm
    i} + \frac{1}{c_i}\vz^C$.  For this $\vz^C$, let $\mathscr{J}$ be
  the set of $y$-coordinates with nonzero integral coefficients in
  $\frac{1}{c_i}\vz^C$.

  If $\mathscr{J}$ is empty, then we have our desired result.  If
  $\mathscr{J}$ is nonempty, decompose $\vz^C$ into its linear
  combination of basis vectors of $\Ker(A)$ that are equivalent to the
  last $n$ columns of the matrix $N$ of Lemma~\ref{lem-vk_equiv_bi}.
  Let $\vz^0$ be the sum of $\ve_{\pm i}$ and the components of this
  linear combination with nonzero element in $\mathscr{J}$.  Note that
  $\vz^0$ is integral.  Let $\vc^0$ be the input chain where
  $[\left(\mathbf{c}^0\right)^T \vzero^T]^T$ is equivalent in all
  $x$-coordinates to $\vz^0$. Let $\vy^0$ be the $2(m+n)$-vector such
  that $j \in \mathscr{J} \implies y_j^0 = (1/c_{i})z_j^C, j \notin
  \mathscr{J} \implies y_j^0 = 0$.  Then by Lemma~\ref{lem-comm_sol},
  $\left(\vz^I\right)_{\pm i} + (1/c_i)\vz^C - \vy^0$ is a basic
  solution to the OHCP LP $O^0$ with input chain $\vc^0$, and all of
  its nonzero $y$-coefficients are nonintegral.

  Now let $\vz = \left(\vz^I\right)_{\pm i} + (1/c_i)\vz^C - \vy^0$,
  let $\vc = \vc^0$, and apply the same logic as above.  Note that
  with each iteration of this process, we lessen the number of nonzero
  $y$-coefficients; the set of nonzero $y$-coefficients of $\vz$
  contains the set of nonzero $y$-coefficients in $(1/c_i)\vz^C$, and
  this set decreases in size each round.  Because $K$ is finite, this
  process must eventually terminate.
\end{proof}

\begin{remark} \label{rem-elemchn}
  Lemmas \ref{lem-frac_basic} and \ref{lem-all_y_fract} allow us to
  concentrate on the easier to analyze case of elementary input chains
  for the OHCP LP in order to arrive at our main results.
\end{remark}


\section{Projections Onto the Space of $p$-Simplex Coefficients}
\label{sec-Projection_to_X}
Call the space of $x$-variables $\mathscr{X} \left(= \R^{2m}\right)$.
We study the projections of the OHCP LP, $P, P_A, \Ker(A),$ basic
solutions, and vertices onto $\mathscr{X}$. For any of these objects
$\Omega$, let $\Omega\toX$ represent the projection of $\Omega$ onto
$\mathscr{X}$. Since the $y$-variables do not appear in the OHCP LP
objective function, a vertex of $P\toX$ must be optimal. We establish
correspondences between the basic solutions in the projection space
and those in the original space. We then prove equivalent
relationships between OHCP and OHCP$_i$ in the projection space. 

First, we extend the definition of concise to $(2m)$-vectors in the
natural way.  We also rephrase the definition of a basic solution
below.
\begin{definition}
  \label{def-basic_sol_X}
  Let $\mathscr{C}$ be the set of constraints of an OHCP LP $O$ that
  are not orthogonal to $\mathscr{X}$, i.e., the set of constraints
  with a nonzero $x$-coefficient.  For any $\vx \in \mathscr{X}$, let
  $\mathscr{C}_{\vx}$ be the set of elements of $\mathscr{C}$ active
  at $\vx$. Then $\vx$ is a \emph{basic solution of $O\toX$} if and
  only if there is no other point in $\mathscr{X}$ where all elements
  of $\mathscr{C}_{\vx}$ are active.
\end{definition}

Note that if $\vx$ fails Definition~\ref{def-basic_sol_X}, then
$\mathscr{C}_{\vx}$ must be active for an entire affine space of some
dimension $d \geq 1$.  To say $\vx \in X$ is feasible in $O\toX$ is
equivalent to saying $\vx \in P\toX$. We justify
Definition~\ref{def-basic_sol_X} and how it relates to vertices of
$P\toX$ with the following lemma.
\begin{lemma}
  \label{lem-bas_feas_sol_vert}
  $\vx \in \mathscr{X}$ is a basic feasible solution of $O\toX$ if and
  only if $\vx$ is feasible, and not a convex combination of any two
  distinct elements of $P\toX$.
\end{lemma}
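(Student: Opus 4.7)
The plan is to prove the lemma by unpacking the definition of a basic solution of $O\toX$ in terms of two clean geometric objects in $\mathscr{X}$: the affine set cut out by the equality constraints, and the nonnegativity cone. First I would observe that every equality constraint of $O$ lies in $\mathscr{C}$ (because the $I$ and $-I$ blocks of $A$ contribute nonzero $x$-coefficients), and that the only other elements of $\mathscr{C}$ are the bounds $z_i \ge 0$ for $i \le 2m$. I would then note the key structural fact $P\toX = P_A\toX \cap \{\vx \ge \vzero\}$: the inclusion $\subseteq$ is immediate, and for the reverse, given any $\vx \in P_A\toX$ with $\vx \ge \vzero$, the equality constraint reads $\vx^+ - \vx^- - \vc \in \image B$, so there exists a real vector $\vy$ with $B\vy = \vx^+ - \vx^- - \vc$, and splitting $\vy = \vy^+ - \vy^-$ into nonnegative and nonpositive parts yields $\vy^\pm \ge \vzero$ and $(\vx,\vy^\pm) \in P$. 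This identification makes the rest of the argument standard polyhedral reasoning.

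For the forward direction, suppose $\vx$ is a basic feasible solution of $O\toX$ and, toward a contradiction, that $\vx = \lambda \vx_1 + (1-\lambda)\vx_2$ with $\vx_1 \ne \vx_2$ in $P\toX$ and $\lambda \in (0,1)$. Every equality constraint in $\mathscr{C}_{\vx}$ is active at $\vx_1$ and $\vx_2$ because both lie in $P_A\toX$. For each bound $x_i \ge 0$ in $\mathscr{C}_{\vx}$ we have $(\vx_1)_i,(\vx_2)_i \ge 0$ by feasibility, while $\lambda(\vx_1)_i + (1-\lambda)(\vx_2)_i = 0$ forces $(\vx_1)_i = (\vx_2)_i = 0$, so that bound is active at both points. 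Hence every element of $\mathscr{C}_{\vx}$ is active at $\vx_1$ (and at $\vx_2$), but at least one of these differs from $\vx$, contradicting Definition~\ref{def-basic_sol_X}.

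For the reverse direction, suppose $\vx \in P\toX$ is not basic in $O\toX$. Then there is $\vx' \ne \vx$ in $\mathscr{X}$ at which every element of $\mathscr{C}_{\vx}$ is active; in particular $\vx' \in P_A\toX$ and $x'_i = 0$ whenever $x_i = 0$. Consider the line $\vx(\epsilon) = \vx + \epsilon(\vx' - \vx)$. Every equality constraint and every active bound remains active along this entire line by linearity, while each bound $x_i \ge 0$ that is inactive at $\vx$ satisfies $x_i > 0$ at $\vx$, so by continuity there is a $\delta > 0$ with $x_i(\epsilon) > 0$ for all $|\epsilon| < \delta$ and all such $i$. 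Hence $\vx(\pm \delta/2) \in P_A\toX \cap \{\vx \ge \vzero\} = P\toX$, they are distinct, and their midpoint is $\vx$, so $\vx$ is a convex combination of two distinct points of $P\toX$.

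The only conceptual obstacle is making precise what it means for an equality constraint involving $\vy$-variables to be ``active at $\vx \in \mathscr{X}$''; the identification $P\toX = P_A\toX \cap \{\vx \ge \vzero\}$ collapses all such equalities into the single affine condition $\vx \in P_A\toX$ and removes any ambiguity. After that, the argument is essentially the textbook equivalence between basic feasible solutions and extreme points, applied in the projected space.
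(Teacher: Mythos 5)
Your proof is correct and follows essentially the same route as the paper: the forward direction uses that an active bound at a convex combination of two nonnegative feasible points must be active at both points, and the reverse direction moves symmetrically along the line through $\vx$ and the witness point $\vx'$ to exhibit $\vx$ as a midpoint of two distinct points of $P\toX$. Your explicit identification $P\toX = P_A\toX \cap \{\vx \ge \vzero\}$ just makes precise the activity of equality constraints in the projection, something the paper's proof uses implicitly, so the arguments are essentially identical.
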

\begin{proof}
  We prove both directions by contrapositive. If $\vx$ is infeasible,
  then clearly it is not a basic feasible solution.  Suppose $\vx$ is
  a convex combination of two distinct elements $\vx^1$ and $\vx^2$ of
  $P\toX$. Then $\vx = \lambda \vx^1 + (1 - \lambda) \vx^2$ for some
  $\lambda \in (0, 1)$.  Since $\vx^1, \vx^2 \in P\toX$, they are
  nonnegative, and so the set of nonzero coefficients of $\vx$ is the
  union of the sets nonzero coefficients of $\vx^1$ and $\vx^2$.
  Therefore any inequality constraints in $\mathscr{C}$ active at
  $\vx$ must also be active at $\vx^1$ and $\vx^2$. Since $P$ is
  convex, $P\toX$ is convex.  Since $\vx^1, \vx^2 \in P\toX, \vx \in
  P\toX$.  Therefore all equality constraints are active at all three
  points.  Therefore $\vx$ does not satisfy Definition
  \ref{def-basic_sol_X} as a basic solution of $O\toX$.

  Now suppose $\vx$ is feasible, but does not satisfy the conditions
  specified in Definition \ref{def-basic_sol_X}.  Let $\vx^\prime$ be
  another point in $\mathscr{X}$ where all elements of
  $\mathscr{C}_{\vx}$ are active.  All equality constraints are active
  at $\vx$, and so are also active at $\vx^\prime$.  Therefore
  $\vx^\prime \in \left(P\toX\right)_A$.  Let $L$ be the line in
  $\mathscr{X}$ containing $\vx$ and $\vx^\prime$.  Then $L \subset
  \left(P\toX\right)_A$.  Any point in $L$ may be expressed as $\alpha
  \vx^\prime + (1 - \alpha) \vx$ for some $\alpha \in \R$.  Choosing a
  value for $\alpha$ defines a point.  Because $P\toX$ is convex,
  there is at most two values for $\alpha$ such that $\alpha
  \vx^\prime + (1 - \alpha) \vx$ is on the boundary of $P\toX$. Since
  $\vx$ is defined by $\alpha = 0$, and $\vx$ is feasible, at most one
  such value is positive, and at most one such value is negative.

  If there is no such value for $\alpha$, then $L \subset P\toX$.
  Then choose values 1 and $-1$ for $\alpha$, to define $\vx^1$ and
  $\vx^2$.  If there is only one such value $\alpha$, then choose
  $\alpha$ and $-\alpha$ to define $\vx^1$ and $\vx^2$.  If there are
  two such values $\alpha_1, \alpha_2$, then choose $\min
  \{\abs{\alpha_1}, \abs{\alpha_2}\}$ and $-\left(\min
  \{\abs{\alpha_1}, \abs{\alpha_2}\}\right)$ to define $\vx^1$ and
  $\vx^2$.  In any case, both $\vx^1$ and $\vx^2$ are in $P\toX$, and
  $\vx = (1/2)\vx^1 + (1/2)\vx^2$.
\end{proof}

Lemma~\ref{lem-bas_feas_sol_vert} shows we may define vertices of
$P\toX$ the same way as in $P$. We now show results for basic
solutions of $O\toX$ and vertices of $P\toX$ that parallel many of our
previous results. Proofs are omitted where the logic is a natural
parallel of these previous results.

\begin{corollary}
  \label{cor-vk_active_X}
  Let $\vx \in \left(P_A\right)\toX$.  $\vx$ is a basic solution of
  $O\toX$ if and only if $\,\forall \vx^K \in \left(\Ker(A)\toX
  \setminus \{\vzero\}\right), \ \exists i: x_i = 0, x_i^K \neq 0.$
\end{corollary}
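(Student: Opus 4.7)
The plan is to mirror the proof of Theorem~\ref{thm-vk_active} in the projected setting, working by contrapositive in both directions. First I would unpack Definition~\ref{def-basic_sol_X} for the OHCP LP concretely: the set $\mathscr{C}$ consists of (i) all equality constraints of $O$, each of which has a nonzero $x$-coefficient (in fact $\pm 1$) coming from the $I$ and $-I$ blocks of $A = [\,I\;-I\;-B\;B\,]$, and (ii) the nonnegativity constraints $z_i \ge 0$ for $i \le 2m$, which are precisely the bounds $x_i \ge 0$. The constraints $z_j \ge 0$ for $j > 2m$ are orthogonal to $\mathscr{X}$ and lie outside $\mathscr{C}$. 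An equality constraint is ``active at $\vx$'' exactly when $\vx$ admits some lift $(\vx,\vy)$ satisfying it, i.e.\ when $\vx \in (P_A)\toX$; an $x$-nonnegativity constraint is active at $\vx$ iff $x_i = 0$.

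For the ($\Leftarrow$ by contrapositive) direction, assume there exists $\vx^K \in \Ker(A)\toX \setminus \{\vzero\}$ with $x_i^K \neq 0 \Rightarrow x_i \neq 0$, equivalently $x_i = 0 \Rightarrow x_i^K = 0$. Pick any lift $\vz^K \in \Ker(A)$ of $\vx^K$ and any lift $\vz \in P_A$ of $\vx$. Then $\vz + \vz^K \in P_A$, so $\vx + \vx^K \in (P_A)\toX$, which means every equality constraint is still active at $\vx + \vx^K$. For every $i \le 2m$ with $x_i = 0$, the hypothesis gives $x_i + x_i^K = 0$, so the corresponding $x$-nonnegativity constraint is also active at $\vx + \vx^K$. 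Since $\vx^K \neq \vzero$, the point $\vx + \vx^K$ is distinct from $\vx$ in $\mathscr{X}$, so $\vx$ fails Definition~\ref{def-basic_sol_X}.

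For the ($\Rightarrow$ by contrapositive) direction, assume $\vx$ is not a basic solution of $O\toX$, so there exists a distinct $\vx' \in \mathscr{X}$ with every element of $\mathscr{C}_{\vx}$ active at $\vx'$. Since the equality constraints of $O$ lie in $\mathscr{C}_{\vx}$ (as $\vx \in (P_A)\toX$), their being active at $\vx'$ forces $\vx' \in (P_A)\toX$. Lift $\vx, \vx'$ to $\vz, \vz' \in P_A$; then $\vz' - \vz \in \Ker(A)$ and its $x$-projection is $\vx^K := \vx' - \vx \in \Ker(A)\toX \setminus \{\vzero\}$. For every $i \le 2m$ with $x_i = 0$, the constraint $x_i \ge 0$ is in $\mathscr{C}_{\vx}$ and hence active at $\vx'$, giving $x_i' = 0$ and therefore $x_i^K = 0$. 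This is precisely the negation of the right-hand condition.

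The one delicate point is interpreting ``active at $\vx$'' for equality constraints that formally involve $\vy$-variables, and confirming that the constraints with nonzero $x$-coefficient are exactly the equality constraints together with the $x$-nonnegativity constraints. Once this bookkeeping is done, the rest of the argument is a direct translation of Theorem~\ref{thm-vk_active}, using only that the projection maps $P_A \to (P_A)\toX$ and $\Ker(A) \to \Ker(A)\toX$ are surjective by definition, so lifts of the required form always exist.
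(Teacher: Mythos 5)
Your proof is correct and is exactly the argument the paper intends: the paper omits this proof as a ``natural parallel'' of Theorem~\ref{thm-vk_active}, and your lifting argument (using surjectivity of the projections $P_A \to (P_A)\toX$ and $\Ker(A) \to \Ker(A)\toX$, with the collective reading of ``equality constraints active at $\vx$'' meaning $\vx \in (P_A)\toX$, which matches the paper's own usage in Lemma~\ref{lem-bas_feas_sol_vert}) is precisely that translation.
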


\begin{lemma}
  \label{lem-basic_sol_concise_X}
  Any basic solution $\vx$ of $O\toX$ is concise.
\end{lemma}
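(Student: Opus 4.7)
The plan is to prove the lemma by contrapositive. Suppose $\vx \in (P_A)\toX$ is not concise, so there is some index $i$ with both $x_i \neq 0$ and $x_{-i} \neq 0$. I will exhibit a second point $\vx' \neq \vx$ in $\mathscr{X}$ at which every constraint of $\mathscr{C}_{\vx}$ is active, contradicting Definition~\ref{def-basic_sol_X}.

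The key structural observation is the one used implicitly in Lemma~\ref{lem-basic_sol_concise}: the opposite entries $x_i$ and $x_{-i}$ are the $x^+$- and $x^-$-coordinates for the same $p$-simplex, and every equality constraint of $O$ involves these two variables only through their difference $x^+_i - x^-_i$ (because the constraint matrix is $[\,I\,-I\,-B\,B\,]$). Hence adding the same small $\epsilon > 0$ to both $x_i$ and $x_{-i}$ preserves every equality of $O$ without perturbing any $y$-value.

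Concretely, I would proceed as follows. First, choose $\epsilon > 0$ small enough that $x_i + \epsilon$ and $x_{-i} + \epsilon$ retain the same signs as $x_i, x_{-i}$, and set $\vx' = \vx + \epsilon\,(\ve_i + \ve_{-i}) \in \mathscr{X}$. Next, pick any $\vy$ with $(\vx,\vy) \in P_A$ (such a $\vy$ exists because $\vx \in (P_A)\toX$), and verify $(\vx',\vy) \in P_A$ by checking row-by-row that the substitution leaves $A\vz$ unchanged. Thus $\vx' \in (P_A)\toX$, which is exactly the statement that the equality constraints of $\mathscr{C}_{\vx}$ remain active at $\vx'$. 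Finally, observe that every inequality constraint in $\mathscr{C}$ active at $\vx$ has the form $x_j \geq 0$ for some $x$-entry $j$ with $x_j = 0$; since $x_i, x_{-i} \neq 0$, the index $j$ must be distinct from $i$ and $-i$, so $x'_j = x_j = 0$ and the constraint remains active at $\vx'$.

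The only real subtlety is clarifying what it means for an equality constraint of $O$ to be ``active at'' a point of $\mathscr{X}$: the natural reading is that it is active at $\vx$ precisely when $\vx \in (P_A)\toX$, so that membership in the projected hyperplane encodes the equality part of $\mathscr{C}_{\vx}$. Once this is settled, the proof is essentially the direct analogue of the omitted argument for Lemma~\ref{lem-basic_sol_concise}, and no further obstacle arises.
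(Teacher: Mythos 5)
Your proof is correct and is essentially the argument the paper intends (it omits the proof as a ``natural parallel'' of Lemma~\ref{lem-basic_sol_concise}): your perturbation direction $\ve_i + \ve_{-i}$ is exactly one of the first $m$ columns of $N$, i.e.\ an element of $\Ker(A)$ whose projection is nonzero and supported only on the coordinates $i,-i$ where $\vx$ is nonzero, so the same conclusion follows in one line from Corollary~\ref{cor-vk_active_X}. Working directly from Definition~\ref{def-basic_sol_X} as you do is fine, and your reading of when an equality constraint is active at a point of $\mathscr{X}$ (membership of the point in $\left(P_A\right)\toX$) matches the paper's own usage in the proof of Lemma~\ref{lem-bas_feas_sol_vert}.
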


A significant difference between the basic solutions in the projection
space and those in the original space is that in the projection,
because they are determined only by $p$-coefficients, the choice of
input chain on a complex has no impact on whether or not a solution in
$\left(P_A\right)\toX$ is basic in the projection.  Referring back to
Figure~\ref{fig-Basic_Sol_Ex_1}, the last solution is not basic in the
projection. Lemma~\ref{lem-comm_sol_X} formalizes this idea of the
input chain not mattering, with some added context useful for our main
result.  It is a parallel to Lemma~\ref{lem-comm_sol}, but becomes
simpler in the projection space.
\begin{lemma}
  \label{lem-comm_sol_X}
  Let $\vz^0$ be concise and be in the hyperplane $P_A$ of the OHCP
  $O$ with input chain $\vc$, with $\vz$ a basic solution to the same
  OHCP in $P_A$ and $\vz\toX$ a basic solution to $O\toX$. Then
  $\vz\toX$ is a basic solution of the OHCP $O^0\toX$ with input chain
  $\vc^0$ in $(P_A)^0\toX$ where $[\left(\vc^0\right)^T \vzero^T]^T$
  is equivalent to $\vz^0\toX$.
\end{lemma}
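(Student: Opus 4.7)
The plan is to reduce the lemma to Corollary~\ref{cor-vk_active_X}, whose characterization of a basic solution in the projected space depends only on the constraint matrix $A$ and the vanishing pattern of the $x$-coordinates of the candidate point---\emph{not} on the input chain. Since swapping the input chain from $\vc$ to $\vc^0$ leaves $A$ untouched, the only substantive thing to verify is that $\vz\toX$ remains in the projected feasible affine set $(P_A)^0\toX$ of $O^0$; once this is in hand, basicness transfers automatically.

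For the feasibility check I would unpack the equivalence hypothesis. Because $[(\vc^0)^T,\vzero^T]^T$ is equivalent in $x$-coordinates to $\vz^0\toX$, Definition~\ref{def-p-coeffs} gives $\vc^0 = (\vz^0)^+ - (\vz^0)^-$. Since $\vz^0 \in P_A$, there exists $\vy$ with $(\vz^0)^+ - (\vz^0)^- = \vc + B\vy$, so $\vc^0 - \vc \in \image B$. Similarly, $\vz \in P_A$ gives $\vz^+ - \vz^- - \vc \in \image B$, and subtracting yields $\vz^+ - \vz^- - \vc^0 \in \image B$. Hence any lift of $\vz\toX$ obtained by adjusting its $y$-part by a witness of this containment lies in $(P_A)^0$, so $\vz\toX \in (P_A)^0\toX$.

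With feasibility established, I would finish by applying Corollary~\ref{cor-vk_active_X} twice. The assumption that $\vz\toX$ is basic in $O\toX$ gives, for every $\vx^K \in \Ker(A)\toX \setminus \{\vzero\}$, an index $i$ with $(\vz\toX)_i = 0$ and $x_i^K \neq 0$. The projected kernel $\Ker(A)\toX$ and the vanishing pattern of $\vz\toX$ are both independent of the right-hand side, so this very condition, applied to the point $\vz\toX$ now viewed as sitting in $(P_A)^0\toX$, lets Corollary~\ref{cor-vk_active_X} conclude that $\vz\toX$ is also basic in $O^0\toX$.

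I do not expect a serious obstacle: the lemma is essentially a statement that \emph{basicness in the projection is a property of $A$ and the support of the point}, a fact made transparent by the projected-kernel characterization of Corollary~\ref{cor-vk_active_X}. The only place that needs care is translating the hypothesis that $[(\vc^0)^T,\vzero^T]^T$ is $x$-equivalent to $\vz^0\toX$ into the concrete identity $\vc^0 = (\vz^0)^+-(\vz^0)^-$, which then makes the cosets $\vc + \image B$ and $\vc^0 + \image B$ coincide and hence the projected feasibility check immediate.
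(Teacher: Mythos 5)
Your proposal is correct and follows essentially the same route as the paper: first show $\vz\toX$ lies in $(P_A)^0\toX$ (the paper does this by noting $\vz-\vz^0\in\Ker(A)$ and that $\vz^0\toX$ is equivalent to the identity solution of $O^0$, which is your $\image B$ coset computation in slightly less explicit form), then transfer basicness via Corollary~\ref{cor-vk_active_X}, whose kernel criterion is independent of the input chain. The only cosmetic difference is that the paper phrases the second step as a contrapositive.
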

\begin{proof}
  Since $\vz$ an $\vz^0$ are both in $P_A$, the difference between
  them is in $\Ker(A)$.  Since $[\left(\mathbf{c}^0\right)^T
    \vzero^T]^T$ and $\vz^0$ are equivalent in all $x$-coordinates,
  $\vz^0\toX$ is equivalent to the identity solution for $O^0$.
  Therefore $\vz^0\toX$ and $\vz\toX$ are both in $(P_A)^0\toX$.

  We show that $\vz\toX$ is a basic solution of $O^0\toX$ by
  contrapositive.  Let $\vx = \vz\toX$. Suppose $\vx$ is not a basic
  solution of $O^0\toX$.  Then by Corollary~\ref{cor-vk_active_X},
  $\exists \vx^K \in \left(\Ker(A)\toX \setminus \{\vzero\}\right)$
  where $x_i = 0 \implies x_i^K = 0$.  But then by
  Corollary~\ref{cor-vk_active_X}, $\vz\toX$ is not a basic solution
  of $O\toX$.
\end{proof}
\begin{lemma}
  \label{lem-pre-im_vert_has_vert}
  For any basic solution $\vx \in \left(P_A\right)\toX$ of $O\toX$,
  there is a basic solution $\vz \in P_A$ of $O$ where $\vz\toX =
  \vx$.
\end{lemma}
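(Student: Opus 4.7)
The plan is to take any preimage $\vz \in P_A$ of $\vx$ (which exists because $\vx \in (P_A)\toX$) and iteratively modify its $y$-coordinates so the result stays in $P_A$, still projects to $\vx$, and has strictly smaller support, until it is basic. A preliminary cleanup replaces $\vz$ by a concise representative: for each pair $(j,-j)$ of opposite $y$-indices nonzero in both coordinates, subtracting a common amount from both leaves $A\vz$ invariant (the columns of $A$ for $y_j$ and $y_{-j}$ are negatives of each other) and does not change $\vz\toX$, which is already concise by Lemma~\ref{lem-basic_sol_concise_X}. So I may assume $\vz$ is concise.

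The heart of the argument is the following. If $\vz$ is not basic, Theorem~\ref{thm-vk_active} supplies a witness $\vz^K \in \Ker(A) \setminus \{\vzero\}$ with $z_i = 0 \Rightarrow z_i^K = 0$. I claim $\vz^K\toX = \vzero$: by linearity $\vz^K\toX \in \Ker(A)\toX$, and if it were nonzero then Corollary~\ref{cor-vk_active_X} applied to the basic $\vx$ would produce an $x$-index $i$ with $x_i = 0$ and $(\vz^K\toX)_i \neq 0$, i.e.\ $z_i = 0$ and $z_i^K \neq 0$, contradicting the witness property. So $\vz^K$ has zero $x$-part, hence is concise in $x$-entries trivially; and its $y$-entries are concise too, because $z_j^K$ and $z_{-j}^K$ both nonzero would force, via the witness, both $z_j$ and $z_{-j}$ nonzero, contradicting conciseness of $\vz$. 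Corollary~\ref{cor-vk_conc_then_y_nz} then yields a $y$-index $j$ with $z_j^K \neq 0$, and the update $\vz' := \vz - (z_j/z_j^K)\vz^K$ lies in $P_A$, still projects to $\vx$ (since $\vz^K\toX = \vzero$), zeroes out coordinate $j$, and preserves every other zero of $\vz$ by the witness property; a short case split on whether $z_i^K$ or $z_{-i}^K$ is nonzero, combined with the conciseness of $\vz^K$ and $\vz$, verifies that $\vz'$ is again concise. Since each iteration strictly reduces the finite count of nonzero $y$-coordinates, the procedure terminates at a basic solution lying over $\vx$.

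The main obstacle is securing that the witness $\vz^K$ has exactly the structure needed to drive the reduction, namely a concise element of $\Ker(A)$ with vanishing $x$-part and a nonzero $y$-entry, so that Corollary~\ref{cor-vk_conc_then_y_nz} actually applies. Both of these properties, happily, follow automatically from basicness of $\vx$ in the projection together with conciseness of $\vz$, which is why maintaining conciseness of the iterate at each step is an essential invariant rather than a cosmetic convenience.
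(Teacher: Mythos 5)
Your proposal is correct and follows essentially the same route as the paper: take any preimage of $\vx$ in $P_A$, observe that any Theorem~\ref{thm-vk_active} witness $\vz^K$ must satisfy $\vz^K\toX = \vzero$ because $\vx$ is basic in the projection, and then eliminate such kernel directions one $y$-coordinate at a time (the paper simply cites the algorithm of Theorem~\ref{thm-v_plus_vk_basic} for this reduction, which your explicit loop reproduces). Your conciseness bookkeeping and the appeal to Corollary~\ref{cor-vk_conc_then_y_nz} are harmless but not needed, since a nonzero kernel element with vanishing $x$-part automatically has a nonzero $y$-entry.
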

\begin{proof}
  Since $\vx \in \left(P_A\right)\toX$, there is some $\vz'$ where
  $\vz'\toX = \vx$, and $\vz' \in P_A$.  Since $\vz'$ and $\vx$ agree
  in all $x$-variables and $\vx$ is a basic solution, for any $\vz^K$
  where $z_i = 0 \implies z_i^K = 0, \vz^K\toX$ must be $\vzero$.  If
  such a $\vz^K$ exists, we may use the algorithm of
  Theorem~\ref{thm-v_plus_vk_basic} to arrive at a basic solution
  $\vz$ of $O$ in $P_A$ where $\vz\toX = \vx$.
\end{proof}

\begin{corollary}
  \label{cor-noint-impl-noint}
  If for a given complex $K$, all vertices of any OHCP LP are
  integral, then all vertices of any projection of an OHCP LP onto
  $\mathscr{X}$ must be integral.
\end{corollary}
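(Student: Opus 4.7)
The plan is to prove the contrapositive. Suppose that for some OHCP LP $O$ on $K$, the projection $O\toX$ has a nonintegral vertex $\vx$. I will construct a nonintegral vertex of $O$ itself, contradicting the hypothesis.

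First, since $\vx$ is a vertex of $O\toX$, Lemma \ref{lem-bas_feas_sol_vert} gives that $\vx$ is a basic feasible solution of $O\toX$, so $\vx \in (P_A)\toX$. Applying Lemma \ref{lem-pre-im_vert_has_vert} lifts this to a basic solution $\vz \in P_A$ of $O$ with $\vz\toX = \vx$. By Corollary \ref{cor-equiv_vert}, there is a unique vertex $\vz^*$ of $P$ equivalent to $\vz$ (i.e., with the same $p$- and $q$-coefficients). This $\vz^*$ is a vertex of the original OHCP LP $O$, and by hypothesis it must be integral.

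The key step is now to derive a contradiction from $\vz^*$ being integral while $\vx = \vz\toX$ is nonintegral. Both $\vz$ and $\vz^*$ are basic solutions, hence concise by Lemma \ref{lem-basic_sol_concise}. Since they are also equivalent, Corollary \ref{cor-int_iff_int} guarantees that $\vz$ is integral if and only if $\vz^*$ is integral. But $\vz$ contains a nonintegral $x$-entry (it agrees with $\vx$ in the $x$-coordinates), so $\vz$ is nonintegral, forcing $\vz^*$ to be nonintegral as well, a contradiction.

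I do not expect any serious obstacle here; the result is essentially a direct chaining of Lemma \ref{lem-pre-im_vert_has_vert}, Corollary \ref{cor-equiv_vert}, Lemma \ref{lem-basic_sol_concise}, and Corollary \ref{cor-int_iff_int}. The only point requiring care is recognizing that the lifted basic solution $\vz$ need not itself be a vertex of $P$ (it may fail nonnegativity), which is exactly why Corollary \ref{cor-equiv_vert} is needed to pass to the actual vertex $\vz^*$ while preserving integrality via conciseness and equivalence.
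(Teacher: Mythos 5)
Your proof is correct and follows essentially the same route as the paper: prove the contrapositive, lift the nonintegral vertex of $O\toX$ to a basic solution of $O$ via Lemma~\ref{lem-pre-im_vert_has_vert}, and pass to the equivalent vertex via Corollary~\ref{cor-equiv_vert}. The paper concludes directly that this vertex shares the (nonintegral) $x$-coordinates of $\vx$, while you transfer nonintegrality through Corollary~\ref{cor-int_iff_int}; this is only a slightly more explicit way of making the same final step.
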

\begin{proof}
  We prove by contrapositive.  If there exists some $O\toX$ with a
  nonintegral vertex $\vx$, then by
  Lemma~\ref{lem-pre-im_vert_has_vert} and
  Corollary~\ref{cor-equiv_vert}, there is a vertex $\vz$ of $O$ where
  all $x$-coordinates of $\vz$ and $\vx$ agree.  Therefore $\vz$ is
  nonintegral.
\end{proof}

\begin{corollary}
  \label{cor-last_OHCP_term_cancels_X}
  Let $\vx \in \left(P_A\right)\toX$ be a basic solution of
  $O\toX$. Let $\vx^0 \in \left(P_A\right)\toX$ with $\vx^0$ concise.
  Let $\vx = \vx^0 + \vx^K$, with $\vx^K$ being concise. Then $\vx^K
  \neq \vzero$ if and only if there exists a $p$-coefficient that is
  zero in $\vx$, but nonzero in $\vx^0$.
\end{corollary}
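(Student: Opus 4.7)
The reverse direction (the existence of a canceled $p$-coefficient implying $\vx^K \neq \vzero$) is immediate, since if $\vx = \vx^0 + \vx^K$ and they differ in any coordinate then $\vx^K \neq \vzero$. So the content is in the forward direction.

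My plan is to reduce to Corollary \ref{cor-vk_active_X} and then do bookkeeping with conciseness. First I would establish that $\vx^K \in \Ker(A)\toX$: since $\vx, \vx^0 \in (P_A)\toX$, lift them to $\vz, \vz^0 \in P_A$ with $\vz\toX = \vx$, $\vz^0\toX = \vx^0$; then $\vz - \vz^0 \in \Ker(A)$ and its projection is $\vx - \vx^0 = \vx^K$. Assuming $\vx^K \neq \vzero$, Corollary \ref{cor-vk_active_X} applied to the basic solution $\vx$ yields an index $i$ with $x_i = 0$ and $x_i^K \neq 0$. Without loss of generality we may take $i \le m$ by replacing $i$ with $-i$ if needed (this does not change any of the three conciseness conditions).

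The main thing to verify is that the $p$-coefficient $x_i - x_{-i}$ at this position is indeed $0$ in $\vx$ but nonzero in $\vx^0$. For $\vx^0$ this is straightforward: from $\vx = \vx^0 + \vx^K$ and $x_i = 0$ we get $x_i^0 = -x_i^K \neq 0$, and since $\vx^0$ is concise, $x_{-i}^0 = 0$, so the $p$-coefficient in $\vx^0$ equals $x_i^0 \neq 0$.

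The trickier step, and the one I view as the main obstacle, is ruling out $x_{-i} \neq 0$ in $\vx$. Here is where the joint use of all three conciseness hypotheses enters. Suppose for contradiction that $x_{-i} \neq 0$. Since $\vx^0$ is concise and $x_i^0 \neq 0$, we still have $x_{-i}^0 = 0$, so $x_{-i}^K = x_{-i} - x_{-i}^0 = x_{-i} \neq 0$. But then $\vx^K$ has both $x_i^K \neq 0$ and $x_{-i}^K \neq 0$, contradicting the conciseness of $\vx^K$. Hence $x_{-i} = 0$, the $p$-coefficient of $\vx$ at index $i$ is $0$, and we are done.
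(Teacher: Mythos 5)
Your proof is correct and matches the argument the paper intends: this corollary's proof is omitted as a ``natural parallel'' of Lemma~\ref{lem-last_OHCP_term_cancels}, and your argument is exactly that parallel, using the projection-space kernel characterization (Corollary~\ref{cor-vk_active_X}) to obtain the index $i$ and then the conciseness of $\vx^0$ and $\vx^K$ to force $x_{-i}=0$, so the $p$-coefficient vanishes in $\vx$ while remaining nonzero in $\vx^0$. Your extra care in checking $\vx^K\in\Ker(A)\toX$ via lifting is a sensible addition but does not change the approach.
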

\begin{corollary}
  \label{cor-v_plus_vk_basic_X}
  Let $\vx^0 \in \left(P_A\right)\toX$ be a basic solution of $O\toX$.
  Let $\vx^K \in \Ker(A)\toX$ with $\{\vx^0, \vx^K\}$ linearly
  concise.  Let $\vx = \vx^0 + \vx^K$.  Then $\vx$ is a basic solution
  if and only if there do not exist $\vx^C, \vx^D$ satisfying the
  following properties:
  \begin{enumerate}
  \item \label{cnd-sum_xk} $\vx^C + \vx^D = \vx^K$.
  \item \label{cnd-in_KerAX} $\vx^C, \vx^D \in \Ker(A)\toX$.
  \item \label{cnd-xd_nz} $\vx^D \neq \vzero$.
  \item \label{cnd-lin_conc_X} $\{\vx^0, \vx^K, \vx^D\}$ is linearly concise.
  \item \label{cnd-x'basic} $\vx^0 + \vx^C = \vx^1$ is a basic
    solution to $O\toX$.
  \item \label{cnd-x_doms_xr}$x_i^D \neq 0 \implies x_i \neq 0\ \forall i$.
  \end{enumerate}
\end{corollary}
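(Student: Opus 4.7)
The plan is to mirror the proof of Theorem~\ref{thm-v_plus_vk_basic}, with the projection-space tools Corollary~\ref{cor-vk_active_X} and Corollary~\ref{cor-last_OHCP_term_cancels_X} replacing their original-space counterparts. Since there are no $y$-coordinates in $\mathscr{X}$, the condition on $y$-entries from the original theorem disappears, and the termination argument can be driven directly by the support of a running $\vx^1$. For the forward direction, assume such $\vx^C, \vx^D$ exist. Then $\vx^D \in \Ker(A)\toX \setminus \{\vzero\}$, and property~\ref{cnd-x_doms_xr} is the contrapositive of ``$x_i = 0 \Rightarrow x_i^D = 0$'', so Corollary~\ref{cor-vk_active_X} applied to $\vx^D$ certifies that $\vx$ is not a basic solution of $O\toX$.

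For the reverse direction, suppose $\vx$ is not basic. I would construct $\vx^D$ by an iterative algorithm, initializing $\vx^D := \vzero$ and $\vx^1 := \vx$, and maintaining as loop invariants that $\vx^1 \in \left(P_A\right)\toX$ is concise and that $\{\vx^0, \vx^K, \vx^D, \vx^1\}$ is linearly concise. While $\vx^1$ is not a basic solution of $O\toX$, Corollary~\ref{cor-vk_active_X} provides $\vy^K \in \Ker(A)\toX \setminus \{\vzero\}$ with $x_i^1 = 0 \Rightarrow y_i^K = 0$. I pick $i$ minimizing $|x_i^1/y_i^K|$ over $y_i^K \neq 0$, set $\alpha = x_i^1/y_i^K$, and update $\vx^1 := \vx^1 - \alpha\vy^K$ and $\vx^D := \vx^D + \alpha\vy^K$. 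Termination follows because coordinate $i$ becomes zero and no previously zero coordinate can become nonzero (since the support of $\vy^K$ lies in the support of $\vx^1$), so the support of $\vx^1$ strictly shrinks in each iteration and the loop exits in at most $2m$ steps with $\vx^1$ basic. Setting $\vx^C := \vx^K - \vx^D$, properties \ref{cnd-sum_xk}, \ref{cnd-in_KerAX}, and \ref{cnd-x'basic} are immediate; \ref{cnd-xd_nz} holds because at least one iteration executed with $\alpha \neq 0$; \ref{cnd-x_doms_xr} follows by induction because the support of each $\vy^K$ used sits inside the support of the then-current $\vx^1$, which is in turn inside the support of $\vx$; and \ref{cnd-lin_conc_X} is the loop invariant.

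The main obstacle I anticipate is keeping the linear-conciseness invariant intact when new $\vy^K$ vectors are introduced. My plan is to rely on the observation that since $\vx^1$ is concise throughout and Corollary~\ref{cor-vk_active_X} forces the support of $\vy^K$ to lie inside that of $\vx^1$, the vector $\vy^K$ is itself automatically concise and uses the same ``side'' of each opposite coordinate pair as $\vx^1$, hence the same side as the whole linearly concise set by the invariant. If a degenerate case ever produced a $\vy^K$ not already aligned, I would repair it by the Lemma~\ref{lem-equiv_coll_conc}-style trick of replacing $\vy^K$ by an equivalent representative in $\Ker(A)\toX$ obtained by adding $(\ve_i + \ve_{-i})$-type kernel elements, which preserve both $p$-coefficients and kernel membership.
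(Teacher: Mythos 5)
Your proof is correct and follows essentially the same route as the paper's: the identical peeling algorithm (initialize $\vx^D=\vzero$, $\vx^1=\vx$, extract kernel elements supported inside the support of $\vx^1$ with the min-ratio step, terminate by strictly shrinking support), with conciseness maintained exactly as the paper asserts. Your forward direction is a slightly more direct application of Corollary~\ref{cor-vk_active_X} to $\vx^D$ instead of routing through Corollary~\ref{cor-last_OHCP_term_cancels_X}, but this is only a cosmetic shortcut, not a different argument.
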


\begin{proof}
  We prove both directions again by contrapositive.  The first
  direction follows in the same way as
  Theorem~\ref{thm-v_plus_vk_basic}, replacing $\vz$ with $\vx$ and
  Lemma~\ref{lem-last_OHCP_term_cancels} with Corollary
  \ref{cor-last_OHCP_term_cancels_X}.  Then suppose $\vx$ is not a
  basic solution. $\{\vx^0, \vx^K, \vx\}$ is still linearly concise.
  Construct $\vx^D$ and find $\vx^1$ using the following algorithm.
  \begin{enumerate}
  \item Let $\vx^D = \vzero, \vx^1 = \vx$.  Then $\{\vx^0, \vx^K,
    \vx^D, \vx^1\}$ is linearly concise.
  \item 
    \label{stp-vprime_not_basic_X}
    $\vx^1$ must be in $\left(P_A\right)\toX$, and is not a basic
    solution.  By Corollary~\ref{cor-vk_active_X}, $\exists \, \vz^N
    \in (\Ker(A)\toX \setminus \{\vzero\})$ where $x_i^N \neq 0
    \implies x_i^1 \neq 0$.  Because $\{\vx^0, \vx^K, \vx^D, \vx^1\}$
    is linearly concise, $\{\vx^0, \vx^K, \vx^D, \vx^1,\vx^N\}$ is
    linearly concise.
  \item Find $i$ such that $x_j^N \neq 0 \implies \abs{x_i^1 / x_i^N}
    \leq \abs{x_j^1 / x_j^N}$.
  \item Let $\alpha = x_i^1 / x_i^N$.
  \item Let $\vx^D = \vx^D + \alpha\vx^N, \vx^1 = \vx^1 -
    \alpha\vx^N$.  Because we may add any linear combination of a set
    of linearly concise vectors to the linearly concise set, $\{\vx^0,
    \vx^K, \vx^D, \vx^1,\vx^N\}$ is still linearly concise.
  \item IF $\vx^1$ is not a basic solution of $O\toX$ THEN LOOP to
    Step~\ref{stp-vprime_not_basic_X}.
  \item STOP.
  \end{enumerate}
  Because $K$ is finite, and we make at least one entry zero that was
  nonzero in $\vx^1$ in each loop, and do not make any zero entries in
  $\vx^1$ nonzero, then by Corollary~\ref{cor-vk_active_X}, this
  algorithm must eventually terminate after at most $m$ iterations.
  And as in Theorem \ref{thm-v_plus_vk_basic}, $\vx^D, \vx^1$, and
  $\vx^C = \vx^K - \vx^D$ satisfy all criteria of the Corollary.
\end{proof}

\begin{corollary}
  \label{cor-must_cancel_two_X}
  Let $\vx^0 \in \left(P_A\right)\toX$ be concise.  Let $\vx = \vx^0 +
  \vx^1$.  If $\vx$ is a basic solution in $\left(P_A\right)\toX$,
  then for each $\vx^K \in \left(\Ker(A)\toX \setminus
  \{\alpha\vx^1\}\right)$ for $\alpha \in \R$, where $x_i^K \neq 0
  \implies x_i^0 \neq 0$, there must be two coordinates $r$ and $s$
  where $x_r = x_s = 0$, $x_r^K, x_s^K \neq 0$, and $\displaystyle
  (x_r^K/x_r^1) \neq (x_s^K/x_s^1)$.  Furthermore, if
  $\left(O_r\right)\toX$ and $\left(O_s\right)\toX$ are the
  projections of OHCP LPs with input chains where the only nonzero
  coefficients are $r$ and $s$ respectively, with these coefficients
  equaling those in $\vx^0$, then $\vx^1 + \vx^{I\prime}$ is a basic
  solution to $\left(O_r\right)\toX$ and $\left(O_s\right)\toX$ where
  $\vx^{I\prime}$ is the solution with $\{\vx^1, \vx^{I\prime}\}$
  linearly concise and equivalent to the identity solution for
  $\left(O_r\right)\toX$ and $\left(O_s\right)\toX$, respectively.
\end{corollary}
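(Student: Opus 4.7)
The plan is to mirror the proof of Lemma~\ref{lem-must_cancel_two} but work entirely in the projection. Since $\mathscr{X}$ carries no $y$-coordinates, the side condition ``$z_j^0 \neq 0 \implies z_j^1 = 0$ for $j > 2m$'' from the original lemma is automatically vacuous, so the argument becomes slightly simpler. Wherever the original proof invoked Theorem~\ref{thm-vk_active}, I would invoke Corollary~\ref{cor-vk_active_X}, and wherever it invoked Theorem~\ref{thm-v_plus_vk_basic}, I would invoke Corollary~\ref{cor-v_plus_vk_basic_X}. The conciseness discipline is maintained throughout by Lemma~\ref{lem-basic_sol_concise_X} and Lemma~\ref{lem-equiv_coll_conc} (the latter transfers to the projection without change).

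For the first assertion I would argue by contrapositive. Since $\vx \in (P_A)\toX$ and $\vx^0 \in (P_A)\toX$, the difference $\vx^1 = \vx - \vx^0$ lies in $\Ker(A)\toX$. Assume there is a $\vx^K \in \Ker(A)\toX \setminus \{\alpha\vx^1 : \alpha \in \R\}$ with $x_i^K \neq 0 \implies x_i^0 \neq 0$ but admitting no pair $r, s$ meeting the stated conditions. Let $\mathscr{R} = \{r : x_r^K \neq 0,\ x_r = 0\}$; Corollary~\ref{cor-vk_active_X} applied to $\vx$ forces $\mathscr{R} \neq \emptyset$. The failure of the pair $r, s$ means there is a single scalar $\alpha \in \R$ with $x_r^K/x_r^1 = \alpha$ for every $r \in \mathscr{R}$. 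Form $\vx^{K\prime} = \vx^K - \alpha \vx^1 \in \Ker(A)\toX$. By the exclusion of $\alpha \vx^1$, this is nonzero, and by the common ratio together with the conciseness of $\vx^0$ (so $x_i = 0$ with $x_i^0 \neq 0$ forces $i \in \mathscr{R}$), we get $x_i = 0 \implies x_i^{K\prime} = 0$. Corollary~\ref{cor-vk_active_X} then contradicts the basicness of $\vx$.

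For the second assertion, let $\vx^{I\prime}$ be the solution linearly concise with $\vx^1$ that is equivalent to the identity for $(O_r)\toX$. Suppose $\vx^1 + \vx^{I\prime}$ is not basic in $(O_r)\toX$. By Corollary~\ref{cor-v_plus_vk_basic_X} applied with $\vx^{I\prime}$ in the role of the base basic solution, there is a splitting $\vx^1 = \vx^C + \vx^D$ with $\vx^D \neq \vzero$, $\vx^D \in \Ker(A)\toX$, and $\{\vx^0, \vx^1, \vx^D\}$ linearly concise. If $\vx^D$ fails to zero out the $s$-coordinate of $\vx^0$ in $O\toX$, then every coordinate nonzero in $\vx^D$ remains nonzero in $\vx = \vx^0 + \vx^C + \vx^D$, so Corollary~\ref{cor-vk_active_X} denies basicness of $\vx$. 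Otherwise, $\vx^0 + \vx^C \in (P_A)\toX$ still admits the kernel element $\vx^D$ with all its nonzero coordinates inside its nonzero support, so the first part of the present corollary produces a fresh pair of coordinates that must also cancel, and the argument iterates. Finiteness of $K$ terminates the iteration and yields the claimed basicness of $\vx^1 + \vx^{I\prime}$ in $(O_r)\toX$. The argument for $(O_s)\toX$ is fully symmetric in $r$ and $s$.

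The main obstacle is the second assertion: not the logic itself, but the bookkeeping of conciseness across three ambient OHCP LPs ($O$, $O_r$, $O_s$) while invoking Corollary~\ref{cor-v_plus_vk_basic_X} to peel off fractional kernel components. Care is required to verify that the decomposition $\vx^1 = \vx^C + \vx^D$ obtained for $(O_r)\toX$ really forces a contradiction in $O\toX$ via the first part of the corollary, and that the iteration strictly decreases some nonnegative integer (for instance, the count of nonzero coordinates of successive $\vx^1$'s), so that termination is guaranteed.
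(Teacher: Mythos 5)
Your overall route is exactly the one the paper intends: the proof of Corollary~\ref{cor-must_cancel_two_X} is omitted in the paper as a ``natural parallel'' of Lemma~\ref{lem-must_cancel_two}, and you reproduce that proof in the projection, substituting Corollary~\ref{cor-vk_active_X} for Theorem~\ref{thm-vk_active} and Corollary~\ref{cor-v_plus_vk_basic_X} for Theorem~\ref{thm-v_plus_vk_basic}, with the $y$-coordinate side condition correctly noted as vacuous. Your second half (decompose $\vx^1$ via Corollary~\ref{cor-v_plus_vk_basic_X}, argue that $\vx^D$ must cancel the $s$-coordinate or contradict basicness of $\vx$, then iterate and terminate by finiteness) matches the paper's second paragraph for Lemma~\ref{lem-must_cancel_two} and is at the same level of rigor.

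The one point to repair is in the first half. The parenthetical ``conciseness of $\vx^0$ (so $x_i = 0$ with $x_i^0 \neq 0$ forces $i \in \mathscr{R}$)'' is not correct: membership in $\mathscr{R}$ requires $x_i^K \neq 0$, and the hypotheses only give that the support of $\vx^K$ lies inside that of $\vx^0$, not the converse, so a coordinate cancelled by $\vx^1$ need not be in the support of $\vx^K$. As a result, the implication you need, namely $x_i = 0 \implies x_i^{K\prime} = 0$ for $\vx^{K\prime} = \vx^K - \alpha\vx^1$, is not established in the case $x_i = 0$, $x_i^K = 0$, $x_i^1 \neq 0$ (there $x_i^0 = -x_i^1 \neq 0$ and $x_i^{K\prime} = -\alpha x_i^1$, which can be nonzero when $\alpha \neq 0$). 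To be fair, the paper's own proof of Lemma~\ref{lem-must_cancel_two} asserts the corresponding implication at exactly this spot without argument, so your proposal is faithful to the paper's template; but the reason you give for the step is wrong as written, and should either be dropped (matching the paper's bare assertion) or replaced by an argument that actually handles that remaining case.
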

\begin{corollary}
  \label{cor-equiv_basic_X}
  $\vx$ is a basic solution of $O\toX$ if and only if $\vx$ is
  concise, and each $\vx'$ that is concise and equivalent to $\vx$ is
  a basic solution.
\end{corollary}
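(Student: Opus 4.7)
The plan is to adapt the proof of Lemma~\ref{lem-equiv_basic} to the projection space, using Lemma~\ref{lem-basic_sol_concise_X} and Corollary~\ref{cor-vk_active_X} in place of their ambient counterparts Lemma~\ref{lem-basic_sol_concise} and Theorem~\ref{thm-vk_active}. At its core, the statement is a bookkeeping claim: being basic in $O\toX$ depends only on which pattern of zero/nonzero entries a concise representative of the equivalence class exhibits, and all concise representatives differ only by a simple swap within each opposite pair $(x_i,x_{-i})$.

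For the forward direction, I would first invoke Lemma~\ref{lem-basic_sol_concise_X} to conclude that $\vx$ is concise. Given any other concise $\vx'$ equivalent to $\vx$, the equivalence combined with conciseness of both vectors forces, at each opposite pair $(x_i,x_{-i})$ where they disagree, that $\vx'$ is obtained from $\vx$ by adding the scalar $-x_i$ to both coordinates: this flips which of the two entries is zero while leaving the $p$-coefficient $x_i-x_{-i}$ unchanged. Hence the modified vector lies in $(P_A)\toX$, and the set of active constraints in $\mathscr{C}$ is altered only by swapping $x_{-i}\ge 0$ out for $x_i\ge 0$. Because these two constraints correspond to distinct standard basis vectors of $\mathscr{X}$, replacing one by the other inside any linearly independent family preserves linear independence; therefore $\mathscr{C}_{\vx'}$ still isolates a unique point in $\mathscr{X}$, so $\vx'$ satisfies Definition~\ref{def-basic_sol_X}.

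The backward direction is immediate: since $\vx$ is concise by hypothesis and equivalent to itself, specializing the assumption to $\vx'=\vx$ yields that $\vx$ is basic. The main point requiring care is checking that each pairwise swap is genuinely well defined, i.e., that the shared $p$-coefficient together with joint conciseness pins the two possible representatives down to $(x_i,0)$ and $(0,-x_i)$. This follows directly from Definition~\ref{def-concise}, so I expect no substantive obstacle beyond re-stating the ambient argument of Lemma~\ref{lem-equiv_basic} one notation layer below, where the absence of $y$-variables actually makes the constraint-counting cleaner than in the original space.
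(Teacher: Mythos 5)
Your proposal is correct and follows exactly the route the paper intends: the paper omits this proof as a ``natural parallel'' of Lemma~\ref{lem-equiv_basic}, and you reproduce that argument in the projection space, using Lemma~\ref{lem-basic_sol_concise_X} for conciseness and the same pairwise swap of $x_i\ge 0$ for $x_{-i}\ge 0$ at each opposite pair where the concise representatives differ. Your observation that the backward direction follows by specializing to $\vx'=\vx$ is a harmless (slightly cleaner) shortcut relative to the paper's ``similar logic'' remark, not a different approach.
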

\begin{corollary}
  \label{cor-equiv_vert_X}
  For each basic solution $\vx$ of $O\toX$, there is a unique vertex
  $\vx'$ of $O\toX$ that is equivalent to $\vx$.
\end{corollary}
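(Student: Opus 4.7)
The plan is to mirror the proof of Corollary \ref{cor-equiv_vert} using the projected analogs established earlier in this section. Starting from the given basic solution $\vx$ of $O\toX$, Lemma \ref{lem-basic_sol_concise_X} guarantees that $\vx$ is concise. I would construct the candidate vertex $\vx'$ by shifting each pair of opposite entries: for every $i$ with $x_i < 0$, subtract $x_i$ from both $x_i$ and $x_{-i}$ (equivalently, add $|x_i|$ to both). This shift preserves the $p$-coefficient $x_i - x_{-i}$, so $\vx'$ is equivalent to $\vx$; it is still concise, since exactly one entry of the shifted pair becomes $0$ while the other remains nonnegative; and it is globally nonnegative by construction. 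Corollary \ref{cor-equiv_basic_X} then promotes $\vx'$, which is concise and equivalent to a basic solution, to a basic solution in its own right. Since $\vx'$ is also feasible (nonnegative and in $(P_A)\toX$), Lemma \ref{lem-bas_feas_sol_vert} identifies it with a vertex of $P\toX$.

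For uniqueness, I would argue that the combined conditions "concise, nonnegative, and equivalent to $\vx$" determine each opposite pair $(x_i', x_{-i}')$ uniquely. Writing $c_i := x_i - x_{-i}$ for the fixed $p$-coefficient, the only nonnegative concise pair with difference $c_i$ is $(\max(c_i, 0), \max(-c_i, 0))$: nonnegativity forbids the "wrong" sign, and conciseness forces the other entry to $0$. Hence any vertex equivalent to $\vx$ must coincide entrywise with the $\vx'$ constructed above, giving uniqueness.

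I do not expect any serious obstacle here, as the projection-space infrastructure (Lemma \ref{lem-basic_sol_concise_X}, Lemma \ref{lem-bas_feas_sol_vert}, and Corollary \ref{cor-equiv_basic_X}) already packages all the nontrivial content of the unprojected proof. The only point requiring a moment of care is that in $\mathscr{X}$ the notion of equivalence collapses to equality of $p$-coefficients, so that concise equivalent representatives really are unique once nonnegativity is imposed; with this observation, the argument reduces to the same coordinate-by-coordinate shift used in Corollary \ref{cor-equiv_vert}.
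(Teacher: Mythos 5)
Your proposal is correct and follows exactly the route the paper intends: the paper omits the proof of Corollary~\ref{cor-equiv_vert_X} as a ``natural parallel'' of Corollary~\ref{cor-equiv_vert}, and your argument is precisely that parallel, performing the same opposite-pair shift and invoking the projected analogues (Lemma~\ref{lem-basic_sol_concise_X}, Corollary~\ref{cor-equiv_basic_X}, Lemma~\ref{lem-bas_feas_sol_vert}). Your explicit uniqueness argument via the pair $(\max(c_i,0),\max(-c_i,0))$ is a slightly more careful rendering of the same idea, with no substantive difference.
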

\begin{corollary}
  \label{cor-int_iff_int_X}
  Let $\vx$ be a concise solution of $O\toX$.  Then $\vx$ is integral
  if and only if each solution $\vx'$ of $O\toX$ that is concise and
  equivalent to $\vx$ is integral.
\end{corollary}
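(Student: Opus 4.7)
The plan is to follow exactly the strategy of the proof of Corollary \ref{cor-int_iff_int}, adapted from the full space to the projection $\mathscr{X}$. One direction is trivial: if every concise $\vx'$ equivalent to $\vx$ is integral, then taking $\vx' = \vx$ shows $\vx$ is integral. So the real content is the forward direction.

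For the forward direction, I would start by observing how two concise equivalent vectors in $\mathscr{X}$ can differ. For each coordinate pair $(i,-i)$ with $p$-coefficient $x_i - x_{-i} = c$, conciseness forces either $(x_i, x_{-i}) = (c, 0)$ or $(x_i, x_{-i}) = (0, -c)$. Hence two concise equivalent vectors differ at coordinate $i$ only by swapping between these two presentations. To transform $\vx$ into a concise equivalent $\vx'$, I would use the same coordinate-wise procedure as in Lemma \ref{lem-equiv_basic}: for each $i$ with $x_i \neq x_i'$, subtract the current value of $x_i$ from both $x_i$ and $x_{-i}$. This sends $(x_i, 0) \mapsto (0, -x_i)$, preserves the $p$-coefficient, and produces a vector that is still concise at $i$ (and unchanged elsewhere).

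If $\vx$ is integral, each such subtraction subtracts an integer from an integer, so by closure of integers under addition every intermediate vector, and in particular the final vector $\vx'$, remains integral. This gives the forward implication.

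I do not expect any serious obstacle: the argument is the natural projection analog of Corollary \ref{cor-int_iff_int}, and the only thing one must verify is that the elementary swap $(c,0) \leftrightarrow (0,-c)$ still makes sense in $\mathscr{X} = \R^{2m}$ (where coordinates may be negative) and that it preserves both conciseness and integrality. Both are immediate from the definitions.
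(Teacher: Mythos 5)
Your proposal is correct and matches the paper's intended argument: the paper proves Corollary~\ref{cor-int_iff_int} by applying the coordinate-wise transformation of Lemma~\ref{lem-equiv_basic} and invoking closure of the integers under addition, and it explicitly omits the proof of the projected version as a ``natural parallel,'' which is exactly what you carry out. The only addition you make is spelling out the swap $(c,0)\leftrightarrow(0,-c)$ in $\mathscr{X}$, which is a harmless elaboration of the same method.
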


\begin{theorem}
  \label{thm_frac_vert_X}
  For a given complex $K$, there is a nonintegral vertex $\vz$ of some
  OHCP LP $O$ with integral input chain $\vc$ 
  where $\vz\toX$ is a vertex of $P\toX$ if and only if there is some
  $i$ such that OHCP$_i$ has a nonintegral vertex $\vz'$ such that
  $\vz'\toX$ is a vertex of $P_i\toX$ where the $i\ord$ coefficient of
  $\vc$ is nonzero.
\end{theorem}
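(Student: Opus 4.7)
The reverse direction is immediate: given OHCP$_i$ with a nonintegral vertex $\vz'$ having $\vz'\toX$ a vertex of $P_i\toX$, taking $O = \mathrm{OHCP}_i$ and $\vc = \ve_i$ yields $c_i = 1 \neq 0$, and $\vz = \vz'$ fulfills all required properties of the left-hand side.

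For the forward direction, the plan is to first establish a projection-space analog of Theorem~\ref{thm-if_v_then_V}: for any basic solution $\vx \in (P_A)\toX$ of $O\toX$ with $\{\vx, \vx^I\toX\}$ linearly concise, there exists a matrix $X$ whose columns form a linearly concise set, each column $X_i$ is a basic solution of $(P_i)_A\toX$, and $X\vc = \vx$. The construction mirrors Theorem~\ref{thm-if_v_then_V}: initialize each column $X_i$ to the vector equivalent to the identity solution of $\mathrm{OHCP}_i\toX$ and linearly concise with $\vx^I\toX$, then iteratively distribute the residual $\vx^K = \vx - \vx^I\toX$ across the columns, invoking Corollary~\ref{cor-v_plus_vk_basic_X} (in place of Theorem~\ref{thm-v_plus_vk_basic}) and Corollary~\ref{cor-last_OHCP_term_cancels_X} (in place of Lemma~\ref{lem-last_OHCP_term_cancels}) at each step.

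With this tool in hand, choose $\vx$ equivalent to $\vz\toX$ with $\{\vx, \vx^I\toX\}$ linearly concise (via Lemma~\ref{lem-equiv_coll_conc}); by Corollary~\ref{cor-equiv_basic_X}, $\vx$ is a basic solution of $O\toX$, and by Corollary~\ref{cor-int_iff_int_X} it inherits nonintegrality from $\vz\toX$. Writing $\vx = \sum_i c_i X_i$ from the projection analog, since $\vc$ is integral and $\vx$ is nonintegral, at least one $X_i$ must be nonintegral with $c_i \neq 0$. Corollary~\ref{cor-equiv_vert_X} produces a unique vertex $\vx'$ of $P_i\toX$ equivalent to $X_i$, which is nonintegral by Corollary~\ref{cor-int_iff_int_X}. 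Lemma~\ref{lem-pre-im_vert_has_vert} yields a basic solution $\vz''$ of OHCP$_i$ with $\vz''\toX = \vx'$, and Corollary~\ref{cor-equiv_vert} upgrades $\vz''$ to an equivalent vertex $\vz'$ of OHCP$_i$; since $\vx'$ has nonnegative entries, the vertex-conversion touches only $y$-entries of $\vz''$, leaving $\vz'\toX = \vx'$ as a nonintegral vertex of $P_i\toX$, so $\vz'$ itself is a nonintegral vertex as required.

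The main obstacle is establishing the projection-space analog of Theorem~\ref{thm-if_v_then_V}. In the original proof, termination of the iterative distribution relied on Corollary~\ref{cor-vk_conc_then_y_nz}, guaranteeing a nonzero $y$-coordinate in any concise nontrivial element of $\Ker(A)$. After projecting out the $y$-coordinates, termination must instead be driven by the monotonic elimination of $x$-entries in the residual, using Property~\ref{cnd-x_doms_xr} of Corollary~\ref{cor-v_plus_vk_basic_X} to ensure no previously-zeroed $x$-entry reappears in subsequent iterations.
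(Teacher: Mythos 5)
Your reverse direction matches the paper's (it is the trivial direction), but your forward direction has a genuine gap: you extract a nonintegral column $X_i$ from the projected decomposition by asserting that $\vx$ ``inherits nonintegrality from $\vz\toX$.'' The hypothesis, however, only says that $\vz$ is a nonintegral vertex; its nonintegrality may sit entirely in the $y$-coordinates, in which case $\vz\toX$ is integral and the identity $\vx=\sum_i c_i X_i$ need not produce any nonintegral column, so the argument stalls. This case is not vacuous: take $K$ a triangulated projective plane and $\vc$ its core circle, so that $2\vc = \Bdmat\,\vw$ with $\vw$ the all-ones vector on the triangles, whose columns are linearly independent; the point with zero $x$-part and $y$-part $\pm 1/2$ is then a nonintegral vertex of $P$, while its projection $\vzero$ is an (integral) vertex of $P\toX$. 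The paper's proof is built precisely to avoid needing nonintegrality of any projection: it argues the contrapositive using the \emph{full-space} decomposition of Theorem~\ref{thm-if_v_then_V}, where a nonintegral column $\vz'$ is guaranteed because the whole vector $\vz$ is nonintegral, and what is transferred from $\vz'$ to $\vz$ is non-basicness of the projections, not nonintegrality: Property~\ref{cnd-v_contains_vr} of Theorem~\ref{thm-v_plus_vk_basic} ensures every nonzero $p$-coefficient of $\vz'$ is nonzero in $\vz$, so a kernel element $\vx^D$ witnessing via Corollary~\ref{cor-v_plus_vk_basic_X} that $\vz'\toX$ is not basic for OHCP$_i\toX$ also witnesses that $\vz\toX$ is not basic for $O\toX$, hence not a vertex of $P\toX$.

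A secondary issue is that your projection-space analog of Theorem~\ref{thm-if_v_then_V} is asserted rather than proved; the termination argument you sketch is plausible given Corollaries~\ref{cor-last_OHCP_term_cancels_X} and~\ref{cor-v_plus_vk_basic_X}, but the paper never needs such an analog, precisely because it decomposes in the full space and projects only at the final step. To salvage your route you would have to treat the integral-projection case separately (for instance via Lemma~\ref{lem-all_y_fract} together with Lemma~\ref{lem-comm_sol_X}, which is the role played by Corollary~\ref{cor-frac_vert_X} in the paper); as written, the forward direction does not establish the theorem.
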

\begin{proof}
  One direction of the if and only if is trivially true: if there is
  some $i$ and OHCP$_i$, then the more general case of OHCP LP follows
  immediately.  We prove the other direction by contrapositive.  If
  there is no such $i$ where OHCP$_i$ has a nonintegral vertex $\vz'$,
  then by Lemma~\ref{lem-frac_basic} and
  Corollary~\ref{cor-equiv_vert}, there can be no nonintegral vertex
  $\vz$ of any OHCP LP.

  Now suppose there is an $i$ where OHCP$_i$ has a nonintegral vertex
  $\vz'$, but no such $i$ where $\vz'\toX$ is a vertex of $P_i\toX$.
  Then for any nonintegral vertex $\vz$ of an OHCP LP, by
  Theorem~\ref{thm-if_v_then_V} and Lemma~\ref{lem-frac_basic}, some
  column $\vz'$ of $Z$ with $Z\vc = \vz$ is a nonintegral basic
  solution of OHCP$_i$ for some $i$. If we construct $Z$ using the
  algorithms of Theorem~\ref{thm-if_v_then_V} and
  Theorem~\ref{thm-v_plus_vk_basic}, then by
  Condition~\ref{cnd-v_contains_vr} of
  Theorem~\ref{thm-v_plus_vk_basic}, any nonzero $p$-coefficient of
  $\vz'$ is nonzero in $\vz$.

  Since $\vz'\toX$ cannot be a vertex, it cannot be a basic solution
  of OHCP$_i\toX$.  So by Corollary~\ref{cor-v_plus_vk_basic_X}, there
  is some $\vx^D \in \Ker(A)\toX$ where every nonzero coefficient of
  $\vx^D$ is nonzero in $\vz'\toX$.  Since all these coefficients are
  $p$-coefficients, all the nonzero coefficients of $\vx^D$ are
  nonzero in $\vz\toX$.  So by Corollary~\ref{cor-v_plus_vk_basic_X},
  $\vz\toX$ cannot be a basic solution to $O\toX$, and so is not a
  vertex of $P\toX$.
\end{proof}

\begin{corollary}
  \label{cor-frac_vert_X}
  For a given complex $K$, there is an OHCP LP with integral input
  chain $\vc$ 
  that has a nonintegral vertex $\vz$ where $\vz\toX$ is a vertex of
  $P\toX$ if and only if there is some $i$ such that OHCP$_i$ has a
  nonintegral vertex $\vz'$ with all nonzero $y$-coefficients
  nonintegral where $\vz'\toX$ is a vertex of $P_i\toX$.
\end{corollary}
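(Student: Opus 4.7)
The plan is to parallel the proof of Lemma \ref{lem-all_y_fract}, additionally tracking that the projection onto $\mathscr{X}$ remains a vertex throughout, by invoking Theorem \ref{thm_frac_vert_X} and Lemma \ref{lem-comm_sol_X} in place of Lemma \ref{lem-frac_basic} and Lemma \ref{lem-comm_sol}. The reverse direction is immediate, since any OHCP$_i$ is itself an OHCP LP with integral input chain. For the forward direction, begin with a nonintegral vertex $\vz$ of some $O$ such that $\vz\toX$ is a vertex of $P\toX$, and apply Theorem \ref{thm_frac_vert_X} to obtain an index $i$ together with a nonintegral vertex $\vz'$ of OHCP$_i$ whose projection $\vz'\toX$ is a vertex of $P_i\toX$.

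If every nonzero $y$-coordinate of $\vz'$ is nonintegral, we are done. Otherwise, write $\vz' = (\vz^I)_{\pm i} + (1/c_i)\vz^C$ as in Lemma \ref{lem-all_y_fract}, expand $\vz^C$ in the last $n$ columns of $N$ via Lemma \ref{lem-vk_equiv_bi}, and let $\mathscr{J}$ be the set of $y$-coordinates whose coefficient in that expansion is nonzero and integral. Form the integral vector $\vz^0$ (the sum of $\ve_{\pm i}$ and the kernel components indexed by $\mathscr{J}$) and the corresponding $\vy^0$ exactly as in the proof of that lemma. Since $A\vz^0 = \pm\ve_i$, we have $\vz^0 \in (P_i)_A$. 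By Lemma \ref{lem-comm_sol}, $\vz' - \vy^0$ is a nonintegral basic solution of a new OHCP $O^0$ whose input chain $\vc^0$ is integral and equivalent in $x$-coordinates to $\vz^0$. Because $\vy^0$ has zero $x$-entries, $(\vz' - \vy^0)\toX = \vz'\toX$, so Lemma \ref{lem-comm_sol_X} applied with $\vz'$ as the basic solution of OHCP$_i$ and $\vz^0$ as the auxiliary concise element yields that $\vz'\toX$ is a basic solution of $O^0\toX$, hence a vertex of the corresponding projection by Corollary \ref{cor-equiv_vert_X}.

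Now invoke Theorem \ref{thm_frac_vert_X} on this new instance to extract a fresh index $i''$ and a nonintegral vertex of OHCP$_{i''}$ whose projection is a vertex of $P_{i''}\toX$, and iterate. The set of nonzero integral $y$-coordinates strictly shrinks with each round, because the coordinates in $\mathscr{J}$ are absorbed by $\vy^0$ while no new integral nonzero $y$-coordinates are introduced. Since $K$ is finite, the procedure terminates with the desired $i$ and $\vz'$.

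The main obstacle is the bookkeeping around the elementary-chain reduction: after each stripping step we land on a generally non-elementary integral input chain $\vc^0$, so we must reinvoke Theorem \ref{thm_frac_vert_X} to restore the elementary case while preserving both the projection-is-vertex property and the decrease in the count of integral nonzero $y$-coordinates. Verifying that the column of the decomposition matrix $Z$ (from Theorem \ref{thm-if_v_then_V}) picked out by Theorem \ref{thm_frac_vert_X} at each stage continues to carry the nonintegral content, so that the induction measure does not backtrack, is the delicate step, mirroring the corresponding argument in the proof of Lemma \ref{lem-all_y_fract}.
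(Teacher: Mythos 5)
Your proposal is correct and follows essentially the same route as the paper: the paper's proof is a one-line combination of Lemma~\ref{lem-all_y_fract} (the $y$-stripping argument you reproduce), Lemma~\ref{lem-comm_sol_X} (to keep the projection a basic solution when the input chain is replaced by $\vc^0$), and Corollaries~\ref{cor-equiv_vert}, \ref{cor-int_iff_int}, \ref{cor-equiv_vert_X}, \ref{cor-int_iff_int_X}, which is exactly the combination you spell out, with Theorem~\ref{thm_frac_vert_X} standing in for the elementary-chain reduction. The ``delicate step'' you flag (that the extracted column's nonzero $y$-support stays inside that of the current solution, so the measure decreases) is handled at the same level of detail inside the paper's proof of Lemma~\ref{lem-all_y_fract}, so your account matches the paper's intended argument.
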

\begin{proof}
  The result follows from Lemmas~\ref{lem-all_y_fract}
  and~\ref{lem-comm_sol_X}, and
  Corollaries~\ref{cor-equiv_vert},~\ref{cor-int_iff_int},
  \ref{cor-equiv_vert_X}, and~\ref{cor-int_iff_int_X}.
\end{proof}

\vspace*{-0.15in}
\section{Minimally Non Totally-Unimodular Submatrices of $\Bdmat$}
\label{sec-BGs_of_MNTUs}

We study minimal violations of total unimodularity, and describe a
stricter version of a minimal violation submatrix of the boundary
matrix. Intuitively, we study M\"obius strips that do not contain
smaller M\"obius strips within their triangles. A \emph{minimally non
  totally-unimodular} (MNTU) matrix is a matrix $M$ that is not
totally unimodular, but every proper submatrix of $M$ is totally
unimodular (also referred to as {\em almost totally unimodular}
matrices \cite{Ca1965}). Several properties of an MNTU matrix $M$ are
known previously \cite{Ca1965,Truemper1992VII}.
\begin{enumerate}
\setlength{\itemsep}{-0.017in}
\item \label{prp-NTU_has_MNTU} A matrix is not totally unimodular if
  and only if has an MNTU submatrix $M$.
\item \label{prp-MNTU_det} $\det(M) = \pm 2$.
\item \label{prp-MNTU_Eulr}Every column and every row of $M$ has an
  even number of nonzero entries, i.e., $M$ is \emph{Eulerian}.
\item \label{prp-MNTU_sum_2} The sum of the entries of $M$ is $2 \bmod
  4$.
\item \label{prp-MNTU_bodd_circuit} The bipartite graph representation
  of $M$ is a chordless (i.e., induced) circuit \cite{CoRa1987}.
\end{enumerate}
The bipartite graph representation \cite{Ca1965,CoRa1987} a submatrix
$M$ of $\Bdmat$ has a vertex for each row and for each column of $M$,
and an undirected edge for each nonzero entry $M_{ij}$ connecting the
vertices for row $i$ and column $j$. Notice that each edge connects a
row vertex, or $p$-vertex, with a column vertex, or $q$-vertex. A
circuit $C$ in a weighted graph is {\em b-odd} ({\em b-even}) if the
sum of the weights of the edges in $C$ is $2 \bmod 4 $ ($0 \bmod
4$). The quality of $C$ being b-even, b-odd, or neither is called the
b-parity of $C$. The following theorem characterizes this bipartite
graph as a circuit.
\begin{theorem}
  \label{thm-induced_flag_trav}
  Given a circuit $C$ that is the bipartite graph representation of an
  MNTU submatrix $M$ of $\Bdmat$, and a set of flags placed on an
  arbitrary subset of the $q$-vertices of $C$, there exists a
  traversal of $C$ such that each portion of the traversal of $C$
  between two consecutive flags is induced.
\end{theorem}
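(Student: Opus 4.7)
The plan is to leverage property 5 of MNTU matrices: the bipartite graph representation $C$ of $M$ is a chordless (induced) circuit. Under this condition, any subpath of $C$ is automatically an induced subgraph of $C$, since an induced cycle admits no edges between non-adjacent vertices. The theorem then reduces to exhibiting a traversal that divides the flags into successive arcs.

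My construction of the traversal is direct: fix any vertex $v_0$ of $C$, pick a direction around the cycle, and record the sequence $v_0, v_1, \dots, v_{\ell-1}$ of vertices encountered, where $\ell$ is the length of $C$ and consecutive vertices are joined by an edge of $C$. The flagged $q$-vertices occur at some positions $0 \le i_1 < i_2 < \cdots < i_t < \ell$ in this ordering. The portion of the traversal between the $j\ord$ and $(j+1)\ord$ flag is then the subpath $v_{i_j}, v_{i_j+1}, \dots, v_{i_{j+1}}$, while the portion from the last flag back to the first is the wrap-around path $v_{i_t}, \dots, v_{\ell-1}, v_0, \dots, v_{i_1}$. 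Each of these portions is a connected arc of $C$ and has vertex set equal to a cyclic interval.

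To verify the induced property I would argue by contradiction: if a portion were not induced, there would be an edge of $C$ between two non-consecutive vertices of the arc. Such an edge would be a chord of $C$, contradicting property 5. Hence every portion between consecutive flags is an induced subgraph, and the traversal just constructed meets the requirement of the theorem regardless of which subset of $q$-vertices carries the flags.

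The main obstacle is not combinatorial but interpretive, namely pinning down the precise scope in which ``induced'' is meant, be it as a subgraph of $C$, of the bipartite graph of $\Bdmat$, or of an enriched structure that records orientations from the boundary operator. In the first reading it is essentially a corollary of property 5; in the second it is equally direct because the chordless property in property 5 is precisely with respect to the bipartite graph of the larger matrix restricted to the vertex set of $C$. If a richer notion is intended in later sections, the same cyclic traversal should still work after a finer check that no additional incidences among the $p$- and $q$-simplices of a portion create a shortcut, again using minimality of $M$.
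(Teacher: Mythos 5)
There is a genuine gap: you have conflated ``circuit'' with ``simple cycle.'' Property~5 says the bipartite graph representation of an MNTU submatrix is a chordless \emph{circuit}, i.e.\ a closed edge-traversal that may revisit vertices; in general $C$ has vertices of degree $4$ or more (this is exactly the situation of an orientation-reversing chain with repeated simplices, such as a M\"obius strip self-intersecting at a vertex, and the paper explicitly notes that repeated simplices mean the bipartite graph is \emph{not} a cycle and that a choice of ordering corresponds to a choice of traversal). Your construction ``fix $v_0$, pick a direction, record the cyclic sequence'' presupposes that the traversal is essentially unique and that every portion between flags is an arc of consecutive vertices; neither holds once a vertex is visited twice. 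Moreover, the claim that chordlessness of $C$ makes every subpath induced fails in the circuit case: the problematic chords are edges of $C$ itself. If a portion of some traversal passes through both endpoints of an edge $h\in C$ but traverses $h$ elsewhere, then $h$ is a chord of that portion even though $C$ is induced in the bipartite graph of $\Bdmat$. This is precisely the difficulty the theorem is about, and it is why the statement is an existence claim about a \emph{well-chosen} traversal rather than a statement about all traversals.

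Consequently the real work, which your proposal omits, is to show that at every vertex of degree $\ge 4$ one can order the passes through that vertex so that all inter-flag portions avoid picking up such chords. The paper does this by a case analysis of how two potential chord edges and the flags can be arranged (its graphs $C_1$--$C_4$), and the only obstruction, configuration $C_4$, is ruled out by a b-parity argument: it would force an induced b-odd circuit strictly smaller than $C$, hence an MNTU submatrix properly inside $M$, contradicting minimality (together with the fact that two distinct $q$-simplices cannot share two distinct $p$-faces). None of this is reachable from Property~5 alone, so your argument only covers the degenerate case where every vertex of $C$ has degree $2$, in which the theorem is indeed immediate; in the general case the proposal as written does not prove the statement.
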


\begin{proof}
  Recall that for any circuit $C$, if an edge $h \in C$ is a potential
  chord of a subgraph of $C$, then both of its end points are of
  degree $4$ or more in $C$. If there is no path in $C$ between two
  flags, then every path between them must contain both end points of
  a chord, and we have a set of paths like one of the graphs in
  Figure~\ref{fig-flag_graphs}.  The graphs shown are abstractions of
  $C$ in the case where the end points of $h_1$ and $h_2$ all have
  degree 4, which is the minimum possible degree for these vertices.
  Graph $C_1$ is the case showing the four half-paths, and the other
  graphs show the possibilities of how these half-paths can connect.
  \begin{figure}[ht!]
    \centering
    \includegraphics[scale=1]{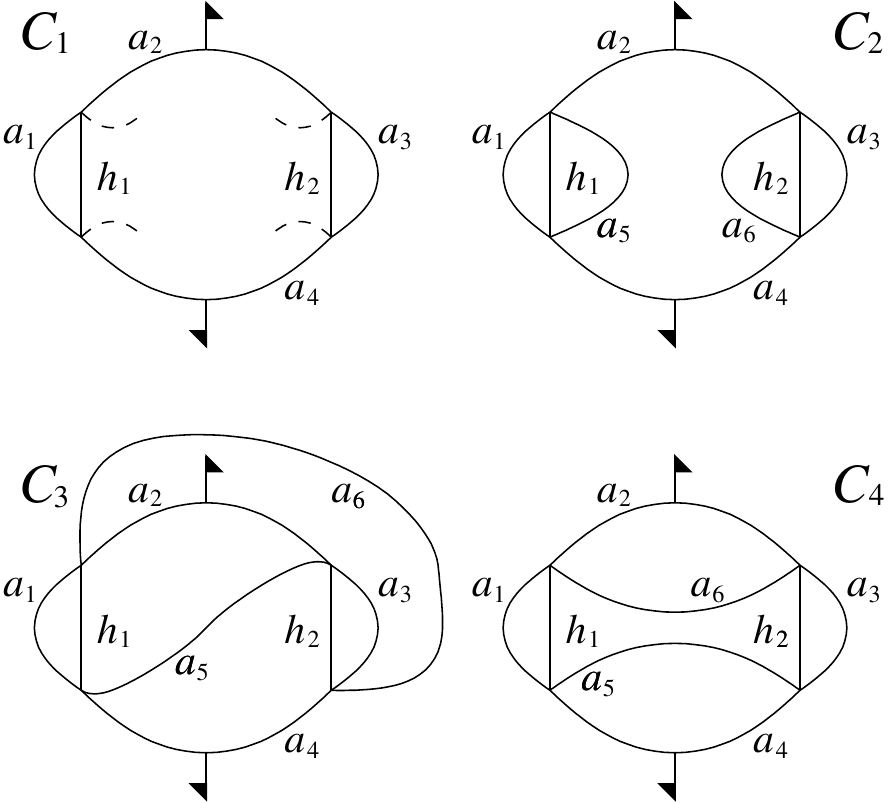}
    \caption{Abstract representations of $C$.}
    \label{fig-flag_graphs}
  \end{figure} 

  In graph $C_2$, though any path between the two flags contains both
  ends of a chord, we may still traverse the entire graph in such a
  way that each portion of the traversal between flags is induced.
  This can be done by traversing $a_1$ and $a_5$ immediately before or
  after $h_1$, and traversing $a_3$ and $a_6$ immediately before or
  after $h_2$.  In this way, all paths between end points of a
  specific chord that do not contain a flag are traversed
  consecutively.

  In graph $C_3$, we may also traverse the graph in such a way that
  each portion of the graph is induced.  If we traverse either $a_5$
  or $a_6$ as soon as possible, then we have a path between the flags
  that does not contain both end points of any chord.  We may then
  traverse that remainder of the graph, which is also induced.

  In graph $C_4$, we cannot traverse the graph without one of the
  portions between the flags not being induced.  This graph may be
  decomposed into four cycles: $a_1$ and $h_1$, $a_2$ and $a_6$, $a_3$
  and $h_2$, as well as $a_4$ and $a_5$.  Note that of these four
  pairs of paths, there must be at least one pair where neither path
  of the pair is a chord.  Otherwise, $C$ would have a cycle of four
  edges.  And because no two distinct $q$-simplices share two distinct
  $p$-faces, this is impossible in a submatrix of $\Bdmat$.

  Suppose without loss of generality that neither $a_4$ nor $a_5$ is a
  chord.  Then these two paths form a cycle.  If this cycle is not
  induced, that means it contains both end points of a potential chord
  not shown, but not the chord itself. Then we alter the two paths so
  that whenever we encounter an end point of a potential chord, not in
  either path, we always choose to cross the chord.  Each altered path
  must still have the same end points shown in $C_4$, or $C_4$ would
  not apply.

  This gives us a cycle $Y$ that is induced.  While $a_4$ or $a_5$ may
  contain a potential chord, it is still true that neither can be a
  chord.  Therefore $C \setminus Y$ is also induced.  Because $C$ is
  b-odd, either $Y$ or $C \setminus Y$ must be b-odd.  But this
  contradicts $M$ being minimal.  Therefore graph $C_4$ cannot occur.

  If we suppose that one or more of the end points of the chords is of
  degree more than 4, each of these points still must be of even
  degree.  If any added paths connect diagonally, then we have the
  same case as graph $C_3$.

  If no added paths connect diagonally, then divide the paths of the
  graph into four sets: those connecting the tops of two different
  chords, those connecting the bottoms of two different chords, and
  two sets connecting end points of the same chord. If any added path
  loops back to its starting vertex, ignore it for now.  Because all
  four end points must be of even degree, if any one of these sets
  contains an odd number of paths, then they all must.  This is
  equivalent to case $C_2$.  If all four sets contain an even number
  of paths, then this is equivalent to case $C_4$, and cannot occur.
  Note that any paths we have ignored that loop back to the vertex
  where they began do not affect the parity of these other four sets
  of paths.

  Also note that paths shown in these abstractions may cross each
  other or themselves in ways not shown, but this will not affect our
  results, as explained below.
  \begin{itemize}
  \item If two paths that both connect the same end points cross, then
    because we are only discussing the existence and parity of the
    number of paths between end points, our argument is unaffected.
  \item If a path connecting the two top points of the chords crosses
    a path connecting the two bottom points, then Case $C_3$ applies.
  \item Case $C_3$ also applies if a path connecting the ends of the
    same chord crosses a path connecting the ends of the other chord.
  \item If a vertical path crosses a horizontal path, or if a path
    that starts and ends at the same point crosses paths from two
    different sets, it is possible none of the graphs shown apply. But
    there still must be a path between the flags that does not contain
    both end points of any chord.
  \end{itemize}

  Also note that the flags may actually be placed at an end point of a
  chord.  However, because the flags may only be placed at
  $q$-vertices, and $C$ is bipartite, they cannot be placed at both
  ends of the same chord.  Therefore the placement of flags will also
  not affect our results, and we may abstract this placement as shown.

  As a final step to show our result that a full traversal exists,
  simply cut out a hole in either $a_2$ or $a_4$ in graph $C_1$ to cut
  out one of the flags.  This altered $C_1$ then represents the
  untraversed remainder of $C$ one must encounter, if there were no
  way to reach a flag without meeting both end points of a chord.
\end{proof}

\noindent We now introduce a generalization of nonorientable surfaces
to arbitrary dimensional chains. This concept allows us to describe
the structure of $q$-chains that the minimal violation submatrices of
$\Bdmat$ correspond to.

\begin{definition}
  \label{def-orient_rev_chain}
  An \emph{orientation-reversing $q$-chain} $Q$ in the simplicial
  complex $K$ is an ordered chain of $q$-simplices $\sigma_0,
  \sigma_1, \dots, \sigma_{k-1}$ where each $\sigma_i$ has common
  $p$-faces $\tau_i$ with $\sigma_{(i + 1) \bmod k}$ and $\tau_{(i -
    1) \bmod k}$ with $\sigma_{(i - 1) \bmod k}$, and the sum of the
  $2k$ entries of $\Bdmat$ indicating each $\tau_i$ is a face of
  $\sigma_i$ and $\sigma_{(i + 1) \bmod k}$ is $2 \bmod 4$.  Each
  $\tau_i$ is an \emph{interior} $p$-simplex of the chain.  We allow
  $q$-simplices or interior $p$-simplices of $Q$ to be repeated, as
  long as for any two instances of such a simplex, the simplices of
  the other dimension immediately before and after these two instances
  form a set of four distinct simplices.  Each $p$-face of any $\sigma
  \in Q$ that is not also the face of either $\sigma_{(i + 1)\bmod k}$
  or $\sigma_{(i - 1)\bmod k}$, for some $i$ indicating the order of
  an instance of $\sigma$ in $Q$, is an \emph{exterior} $p$-simplex of
  $Q$.
\end{definition} 

The restriction on repetition is equivalent to no entry of $\Bdmat$
being used twice in this chain. It is then immediate that each b-odd
circuit in the bipartite graph representation of $\Bdmat$ represents
an orientation-reversing chain, and vice versa.  There is an MNTU
submatrix (MNTUS) $M$ of $\Bdmat$ whose columns correspond to the
$q$-simplices of $Q$ if and only if this b-odd circuit is induced, and
does not properly contain another induced b-odd circuit.  If there is
such an MNTUS, we call the rows of $\Bdmat$ that intersect $M$, which
correspond to the interior $p$-simplices of the orientation-reversing
chain, \emph{interior} rows. We denote by $Q_M$ the columns of
$\Bdmat$ corresponding to the $q$-simplices in the
orientation-reversing chain, and also call the rows of $\Bdmat$ that
correspond to exterior $p$-simplices \emph{exterior} rows. These are
the rows of $\Bdmat$ that do not intersect $M$, but have nonzero
entries in $Q_M$.

Note that if there are repeated simplices in $Q$, the ordering given
is not unique, and two different orderings may differ by more than a
choice of a starting simplex $\sigma_0$.  The repeated simplices imply
that the bipartite graph representation is not a cycle, and a choice
of ordering the simplices in $Q$ corresponds to a choice of traversal
of its bipartite graph. We now define a submatrix that minimally
violates total unimodularity in a stricter sense.

\begin{definition}
  \label{def-CMNTUS}
  For a given matrix $A$, a \emph{columnwise minimally non
    totally-unimodular submatrix}, or CMNTUS, $M$ of $A$ is an MNTUS
  where no MNTU $M'$ that is also a submatrix of $A$ exists such that
  the set of columns of $M'$ is a subset of the set of columns of $M$.
  If there is such an $M'$, then $M'$ is \emph{columnwise contained}
  in $M$.
\end{definition}

\noindent We describe a useful property of a CMNTUS of $\Bdmat$, and
illustrate the distinction between a MNTUS and a CMNTUS on a
$2$-complex in Figure~\ref{fig-MNTUSnotCMNTUS}.
\begin{theorem}
  \label{thm-odd_ext_rows_CMNTUS}
  If $M$ is a CMNTUS of $\Bdmat$, then each exterior row for $M$ has
  an odd number of nonzero entries in $Q_M$.
\end{theorem}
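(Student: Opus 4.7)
My plan is to argue by contradiction: assume the exterior row $r$ has an even number $2t \geq 2$ of nonzero entries in $Q_M$, and then exhibit a non-TU submatrix of $\Bdmat$ whose column set is a proper subset of the columns of $M$. By Property~\ref{prp-NTU_has_MNTU} this submatrix contains an MNTUS, which is columnwise contained in $M$, contradicting $M$ being a CMNTUS.

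Label the $q$-vertices where $r$ has nonzero entries as $q_{j_1},\ldots,q_{j_{2t}}$ in cyclic order along the chordless bipartite circuit $C$ of $M$ (Property~\ref{prp-MNTU_bodd_circuit}). For each $i$, let $A_i$ be the arc of $C$ from $q_{j_i}$ to $q_{j_{i+1}}$ (indices mod $2t$) whose interior contains none of the other $q_{j_k}$, and form the candidate circuit $C_i := A_i \cup \{(r,q_{j_i}),(r,q_{j_{i+1}})\}$. Two routine checks are needed: (a)~$C_i$ is chordless in the bipartite graph of $\Bdmat$, because $C$ is chordless and the bipartite structure forces any chord of $C_i$ to be an edge from $r$ to an interior $q$-vertex of $A_i$, which the choice of $A_i$ rules out; (b)~each $A_i$ has at least four edges, since a two-edge arc $q_{j_i}\!-\!p\!-\!q_{j_{i+1}}$ together with $r$ would force $q_{j_i}$ and $q_{j_{i+1}}$ to share two distinct $p$-faces $p$ and $r$, which is forbidden in $\Bdmat$ (the same obstruction invoked in the proof of Theorem~\ref{thm-induced_flag_trav}). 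Together these imply that the column set of $C_i$ is a proper subset of $Q_M$: when $2t \geq 4$ this is immediate, and when $2t=2$ both arcs have length $\geq 4$, so each arc's interior must contain at least one $q$-vertex missed by the other.

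The crux is a b-parity count that locates an $i$ for which $C_i$ is b-odd. Letting $e_k \in \{\pm 1\}$ denote the entry of $\Bdmat$ in row $r$ and column $q_{j_k}$, each edge of $C$ lies in exactly one $A_i$ and each $e_k$ occurs in exactly two of the $C_i$, so
\[
\sum_{i=1}^{2t}\operatorname{sum}(C_i)\;\equiv\;\operatorname{sum}(C)+2\sum_{k=1}^{2t}e_k\;\equiv\;2 + 0 \;\equiv\; 2\pmod 4,
\]
using Property~\ref{prp-MNTU_sum_2} and the fact that $\sum_k e_k$ is a sum of $2t$ values in $\{\pm 1\}$, hence even. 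Each $\operatorname{sum}(C_i)$ is even (a bipartite $\pm 1$ cycle has even length), so each lies in $\{0,2\}\bmod 4$, and therefore an odd number of the $C_i$ must be b-odd. Picking any such $i^{\ast}$, the submatrix $M_1$ of $\Bdmat$ whose bipartite graph is $C_{i^{\ast}}$ is a chordless b-odd circuit matrix with $|\det M_1|=2$ and so is non-TU; by Property~\ref{prp-NTU_has_MNTU} it contains an MNTUS $M'$, whose columns form a proper subset of $Q_M$. This contradicts $M$ being a CMNTUS. The main obstacle is precisely this b-parity bookkeeping combined with excluding degenerate short arcs via the no-two-shared-$p$-faces property of $\Bdmat$; once those are in place the conclusion is immediate from the chordless-circuit characterization of MNTU submatrices.
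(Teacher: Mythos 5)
Your proof follows essentially the same route as the paper's: attach the ``spokes'' of the exterior row to the circuit $C$, cut $C$ into slices (your $C_i$) between consecutive spokes, use a mod-$4$ count to find a b-odd slice, and conclude that this slice yields a non-TU submatrix whose columns are properly contained in $Q_M$, contradicting columnwise minimality. Your parity bookkeeping ($\sum_i \operatorname{sum}(C_i) \equiv \operatorname{sum}(C) + 2\sum_k e_k \equiv 2 \pmod 4$, with each slice sum even) is in fact a cleaner version of the paper's slice-by-slice parity argument, and your exclusion of two-edge arcs via the fact that two distinct $q$-simplices cannot share two distinct $p$-faces matches the paper.

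There is, however, a genuine gap: you implicitly assume that the bipartite representation $C$ of the MNTUS is a \emph{simple cycle}, so that the $q$-vertices met by row $r$ have a canonical cyclic order, the arcs $A_i$ are uniquely determined, and chordlessness of each $C_i$ follows from chordlessness of $C$. The paper explicitly allows $C$ to be a circuit with repeated vertices --- rows or columns of an MNTUS may have four or more nonzeros, which is exactly the situation of repeated simplices in an orientation-reversing chain discussed after Definition~\ref{def-orient_rev_chain}. In that case your step (a) breaks down: an edge of $C$ traversed in a different portion of the circuit can join two vertices that both occur on $A_i$, and is then a chord of $C_i$ even though $C$ is induced; moreover your ``proper subset of columns'' argument can fail, since interior $q$-vertices of the other arcs may coincide with $q$-vertices already on $A_{i^\ast}$, and the arcs themselves depend on the chosen traversal. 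This is precisely why the paper proves Theorem~\ref{thm-induced_flag_trav}: placing flags at the spoke endpoints, one must show there is a traversal of $C$ (a way of splitting the high-degree vertices into a ``wheel'') for which every slice is induced, and that requires the nontrivial case analysis given there. To complete your argument you must either invoke Theorem~\ref{thm-induced_flag_trav} at the point where you define the arcs $A_i$, or prove separately that an MNTUS of a boundary matrix always has a simple-cycle bipartite representation --- neither of which your proposal does.
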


\begin{proof}
  Let $i$ be an exterior row of an arbitrary MNTU submatrix $M$ of
  $\Bdmat$ with an even number of nonzero entries of $i$ in $Q_M$. Let
  $C$ be the chordless b-odd circuit that is the bipartite graph
  representation of $M$.  If $C$ is not a cycle, split nodes as
  necessary to represent $C$ as a cycle, or ``wheel''.  Add the
  bipartite graph edges of $i$ in $Q_M$, and think of these edges of
  $i$ as spokes of the wheel $C$.  Call any portion of $C$ in between
  consecutive spokes, along with these spokes, a ``slice'' of the
  wheel.  Each of these slices then is a cycle, and there are an even
  number of these slices in the entire wheel.

  If $C$ is not a cycle, then we have a choice of ordering the wheel
  as we split nodes to create it.  To show our result, we choose this
  ordering in such a manner that there is no chord for any slice of
  the wheel (when thinking of any slice as a separate cycle).  If we
  think of the spoke ends as being flags,
  Theorem~\ref{thm-induced_flag_trav} shows this step can be
  performed.

  If we start with a single slice of the wheel, and re-build the wheel
  by adding adjacent slices, it must be true that at least one of
  these slices must be a b-odd cycle. After putting together an odd
  number of b-even slices, the cycle that is the portion of the wheel
  we have built plus the two boundary spokes must be b-even. Hence if
  we put all but one of the slices of the wheel together, the
  resulting boundary is a b-even cycle. The last slice, unlike all the
  other previous slices, must have two edges in common with the part
  of the wheel already built.  We know the entire wheel is b-odd, and
  so this last slice must be b-odd.  We may use a similar b-parity
  argument to show that the number of b-odd slices is odd.

  Now restore $C$ to its original form.  The slice of the wheel that
  was a b-odd cycle is now a b-odd circuit.  And by our choice of
  traversal, it is chordless.  The slice contains two edges not in
  $C$, and because no two $q$-simplices may have more than one common
  $p$-face, excludes more than two edges of $C$.  Therefore the
  submatrix $M'$ whose nonzero entries are the edges of $C$ contains
  fewer columns than $M$.  Also, all the columns of $M'$ are also
  columns of $M$.  And because $C$ is a chordless b-odd circuit, $M'$
  is an MNTU. But this result contradicts the assumption that $M$ is a
  CMNTUS.
\end{proof}

\begin{figure}[ht!]
  \centering
  \includegraphics[scale=0.8]{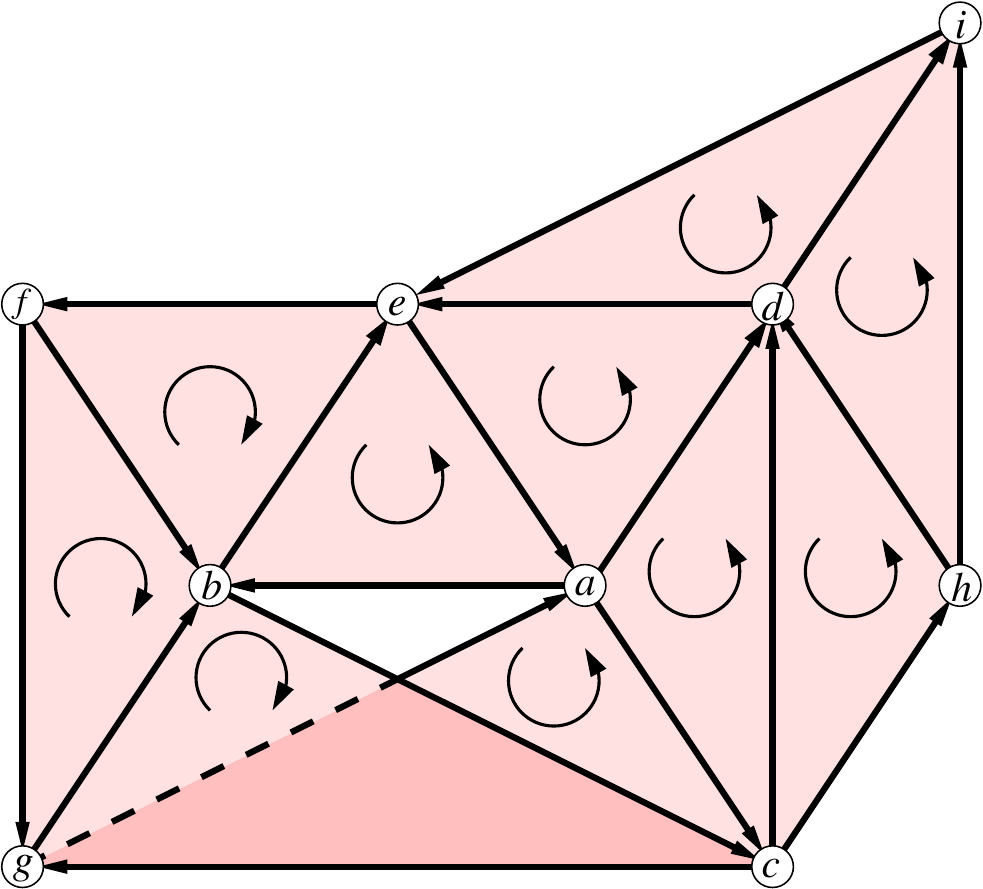}
  \caption{A $2$-complex illustrating an MNTUS and a CMNTUS. The
    M\"obius strip formed by all triangles represents an MNTUS $M$ of
    the $2$-boundary matrix of the complex, but $M$ is not a
    CMNTUS. Edge $ad$ is the face of two triangles in this M\"obius
    strip, and thus corresponds to an exterior row that has an even
    number of nonzeros in $Q_M$. The smaller M\"obius strip formed by
    the seven triangles leaving out $die, dhi$, and $dch$ represents a
    CMNTUS $M'$. Edge $ad$ is an interior row of $M'$.}
  \label{fig-MNTUSnotCMNTUS}
\end{figure} 

For any MNTUS $M$ of $\Bdmat$ with $r$ rows, let $\mathscr{M}$ be the
set of elements of $\Ker(A)$ whose nonzero $q$-coefficients are
contained in $Q_M$.  Because $\det(M)$ is nonzero, $\Ker(M)$ is
trivial.  This means there is a bijection between the set of linear
combinations of columns of $M$, and the set of possible row sums of
$M$.  This, along with Lemma~\ref{lem-vk_equiv_bi}, implies that for
any $\vm^1, \vm^2 \in \mathscr{M}$, the set of $p$-coefficients of
interior rows of $\vm^1$ and $\vm^2$ are equal if and only if all
$q$-coefficients of $\vm^1$ and $\vm^2$ are equal.

\begin{definition}
  For any row $i$ of $M$, let $[\vm^i]$ denote the equivalence class
  of elements of $\mathscr{M}$ whose $p$-coefficients of interior rows
  is the unit vector with its nonzero coefficient at row $i$.
\end{definition}
Letting $P_M$ represent the set of interior rows of $M$, another
consequence of the kernel of $M$ being trivial, and the bijection
between the set of linear combinations of columns of $M$ and the set
of possible row sums of $M$, is that the following equation holds for
any $\vm \in \mathscr{M}$ with $p$-coefficients $\vp$, and an
appropriate choice of $\vm^i$ from each $[\vm^i]$.
\begin{equation}
  \label{eqn-lin_cmb_unit_nulls}
  \vm = \sum_{i \in P_M}p_i\vm^i. 
\end{equation}

\noindent The following lemma characterizes the fractional elements of
$[\vm^i]$ for any interior row $i$ of $M$.
\begin{lemma}
  \label{lem-unit_nulls}
  For any MNTU submatrix $M$ of $\Bdmat$, and any interior row $i$ of
  $M$, each $q$-coefficient of any element of $[\vm^i]$ is nonzero if
  and only if it corresponds to a column of $Q_M$, and each such
  coefficient is $\pm (1/2)$.
\end{lemma}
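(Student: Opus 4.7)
The plan is to translate the statement into the linear system $M\vy = \ve_i$ and then read off the solution via Cramer's rule, using the structural properties of MNTU matrices already listed in this section.

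First, fix $\vm \in [\vm^i]$ and let $\vy$ denote its vector of $q$-coefficients and $\vx$ its vector of $p$-coefficients. Since $\vm \in \Ker(A)$ and $A = [\,I\,,\,-I\,,\,-B\,,\,B\,]$, the kernel relation reduces to $\vx = B\vy$. Because $\vm \in \mathscr{M}$, the support of $\vy$ lies inside $Q_M$, so for any interior row $r$ of $M$ the $r$th entry of $\vx$ equals $(M\vy_{|Q_M})_r$. By the definition of $[\vm^i]$, the restriction of $\vx$ to the interior rows $P_M$ is $\ve_i$. Hence $\vy_{|Q_M}$ is a solution of $M\vy_{|Q_M} = \ve_i$, and it is unique because $\det M = \pm 2 \neq 0$ by property \ref{prp-MNTU_det}.

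Apply Cramer's rule: for each column $j$ of $Q_M$,
\[
  y_j \;=\; \frac{\det M_j^{(i)}}{\det M},
\]
where $M_j^{(i)}$ is $M$ with its $j$th column replaced by $\ve_i$. Expanding $\det M_j^{(i)}$ along this replaced column, whose only nonzero entry sits in row $i$, yields $\det M_j^{(i)} = \pm\det M_{(i,j)}$, where $M_{(i,j)}$ is the proper submatrix of $M$ obtained by deleting row $i$ and column $j$. Since $M$ is MNTU, $M_{(i,j)}$ is TU, so $\det M_{(i,j)} \in \{-1,0,1\}$, which already forces $y_j \in \{-\tfrac12,0,\tfrac12\}$.

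The key remaining step, and the main obstacle, is to rule out $\det M_{(i,j)} = 0$. For this I would use properties \ref{prp-MNTU_Eulr} and \ref{prp-MNTU_bodd_circuit}: the bipartite graph of $M$ is an induced cycle $C$ in which every vertex has degree exactly two, with $k$ vertices of each type. Deleting the $p$-vertex $i$ and the $q$-vertex $j$ from $C$ leaves either a single bipartite path (when $i,j$ are adjacent in $C$) or a disjoint union of two bipartite paths (when they are not). In either case every residual path has endpoints of opposite type, hence equal numbers of $p$- and $q$-vertices, and after ordering its rows and columns consecutively along the path the corresponding block of $M_{(i,j)}$ becomes upper (or lower) triangular with $\pm 1$ entries on the diagonal. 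Therefore $|\det M_{(i,j)}| = 1$, so $y_j = \pm\tfrac12$ for every column $j$ in $Q_M$; combined with $y_j = 0$ for $j \notin Q_M$ (by membership in $\mathscr{M}$), this gives the claimed equivalence. The one technical step requiring care is verifying both subcases of the cycle deletion produce matrices that reorder into the triangular form — everything else is a direct computation from the listed MNTU properties.
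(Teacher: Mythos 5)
Your reduction to the square system $M\vy_{|Q_M}=\ve_i$ and the Cramer's-rule bound are sound: since every proper submatrix of $M$ is TU and $\det M=\pm2$, each cofactor lies in $\{0,\pm1\}$, so each $q$-coefficient lies in $\{0,\pm\tfrac12\}$, and coefficients outside $Q_M$ vanish by membership in $\mathscr{M}$. This is a genuinely different (and for that half, shorter) route than the paper's, which instead negates one nonzero entry of row $i$, uses Camion's characterization to conclude the resulting Eulerian matrix with entry sum $0 \bmod 4$ is TU, extracts a $\pm1$ column signing with all row sums zero, and restores the sign to exhibit the solution of $M\vy=\ve_i$ explicitly as one half of a $\pm1$ vector.

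The gap is in your final step, where you rule out zero cofactors by asserting that the bipartite graph of $M$ is a simple cycle in which every vertex has degree exactly two. None of the listed properties gives you this: property~\ref{prp-MNTU_Eulr} only says each row and column has an \emph{even} number of nonzeros, and property~\ref{prp-MNTU_bodd_circuit} says the bipartite graph is a chordless \emph{circuit}, i.e., a closed walk using each edge once, in which vertices of degree four or more are allowed. The paper explicitly works with this possibility: the proof of Theorem~\ref{thm-induced_flag_trav} analyzes end points of degree $4$ or more, and the discussion after Definition~\ref{def-orient_rev_chain} notes that repeated simplices make the bipartite graph representation fail to be a cycle (for $\Bdmat$ with $q\ge3$ a column can contribute four nonzeros, and an interior $p$-simplex visited twice by the orientation-reversing chain gives a row with four nonzeros). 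In such cases deleting the vertices for row $i$ and column $j$ need not leave one or two paths, your triangularization breaks down, and the nonvanishing of $\det M_{(i,j)}$ for \emph{every} column $j$ of $Q_M$ --- equivalently, that the unique solution has full support --- is precisely the nontrivial content still requiring proof. Either adopt the paper's signing argument for that step (it needs no degree assumption), or supply a separate proof that an MNTU submatrix of a boundary matrix has all degrees equal to two, which the paper does not claim and which you cannot take for granted.
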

\begin{proof}
  For an MNTUS $M$ of $\Bdmat$, let $i$ be an interior row.  Because
  $\det(M) \neq 0$, there is some entry $M_{ij}$ that is nonzero.  If
  we multiply $M_{ij}$ by $-1$, and call the result $M'$, then $M'$ is
  Eulerian, and the sum of its entries is $0 \bmod 4$.  Therefore $M'$
  is totally unimodular.  Hence there is some set $\mathscr{J}$ of
  columns of $M'$ that we may multiply by $-1$, and if we call the
  result $M_{-\mathscr{J}}'$, the sum of each row of
  $M_{-\mathscr{J}}'$ must be $0, 1$, or $-1$.  Because
  $M_{-\mathscr{J}}'$ is also Eulerian, each of these row sums must be
  $0$. If we now multiply $M_{ij}$ by $-1$ again and call the result
  $M_{-\mathscr{J}}$, the row sum of $i$ is $\pm 2$, and every other
  row sum is still 0.  If the row sum of $i$ is $-2$, multiply all
  columns of $M_{-\mathscr{J}}$ by $-1$. Now call this result, whether
  this final inversion is necessary or not, $M_i$.  The row sum for
  $i$ is 2, and all other row sums are 0, and $M_i$ is $M$ with some
  (perhaps empty) set of columns of $M$ scaled by $-1$. Therefore by
  Lemma~\ref{lem-vk_equiv_bi} and equation
  (\ref{eqn-lin_cmb_unit_nulls}), any element of $\mathscr{M}$ whose
  nonzero $q$-coefficients agree with the scalings of $M_i$ is twice
  some $\vm^i \in [\vm^i]$, and all these coefficients are either 1 or
  $-1$.  Since any elements of $\mathscr{M}$ are equal in all
  $q$-coefficients if and only if they are equivalent, any $\vm^i \in
  [\vm^i]$ must have all nonzero $q$-coefficients be $\pm (1/2)$.
\end{proof}

\noindent From this result, the following Lemma is almost immediate,
and its proof is omitted.
\begin{lemma}
  \label{lem-null_pairs}
  For a given MNTUS $M$, and any list of interior rows $i_1,
  i_2,... i_n$, and elements $\vm^{i_1}, \vm^{i_2},... \vm^{i_n}$ of
  $[\vm^{i_1}], [\vm^{i_2}],... [\vm^{i_n}]$, respectively:
  \begin{enumerate}
    \renewcommand{\theenumi}{{P\arabic{enumi}}}
    \renewcommand{\labelenumi}{{P\arabic{enumi}.}} 
  \item \label {prp-pair_coeffs_0-1} If $n = 2$, all $q$-coefficients
    of both $\vm^{i_1} + \vm^{i_2}$ and $\vm^{i_1} - \vm^{i_2}$ are in
    $\{0, \pm 1\}$.
  \item \label{prp-inv_sign_invb_mod_2} If $n = 2$, for any $j \in
    Q_M$, the $j\ord\ q$-coefficient of $\vm^{i_1} + \vm^{i_2}$ is
    zero if and only if the $j\ord\ q$-coefficient of $\vm^{i_1} -
    \vm^{i_2}$ is nonzero.
  \item \label{prp-even_sum_int} If $n$ is even, $\sum_{\alpha =
    1,...n}\vm^{i_\alpha}$ is integral.
  \item \label{prp-odd_sum_halves} If $n$ is odd, every nonzero
    $q$-coefficient of $\sum_{\alpha = 1,...n}\vm^{i_\alpha}$ is
    nonintegral, with each of these nonzero $q$-coefficients of the
    form $\frac{k}{2}$ with $k$ an odd integer
  \end{enumerate}
\end{lemma}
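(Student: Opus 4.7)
The plan is to derive all four properties directly from Lemma~\ref{lem-unit_nulls}, which pins down the form of each $\vm^{i_\alpha}$: its $q$-coefficient at column $j$ is $\pm(1/2)$ when $j \in Q_M$ and $0$ otherwise. The entire argument then reduces to a coordinate-by-coordinate parity analysis on the $\pm(1/2)$ contributions, so I would organize the proof as four short bullets sharing this common setup.

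For property~\ref{prp-pair_coeffs_0-1}, I would fix any column $j$ and observe that each of $\vm^{i_1}_j$ and $\vm^{i_2}_j$ is either $0$ (if $j \notin Q_M$) or $\pm(1/2)$ (if $j \in Q_M$); the sum and difference are therefore in $\{0, \pm 1\}$. For property~\ref{prp-inv_sign_invb_mod_2}, restrict to $j \in Q_M$, so both values are nonzero elements of $\{\pm(1/2)\}$; then $\vm^{i_1}_j + \vm^{i_2}_j = 0$ iff the two signs disagree, which is exactly the condition for $\vm^{i_1}_j - \vm^{i_2}_j \neq 0$. Both claims are symmetric in the choice of $j$, and coefficients outside $Q_M$ are trivially zero on both sides.

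For properties~\ref{prp-even_sum_int} and~\ref{prp-odd_sum_halves}, I would again look at one $j \in Q_M$ at a time. Writing each $\vm^{i_\alpha}_j = \varepsilon_\alpha/2$ with $\varepsilon_\alpha \in \{\pm 1\}$, the sum at $j$ equals $(\sum_\alpha \varepsilon_\alpha)/2 = k/2$, where $k = \sum_\alpha \varepsilon_\alpha$ has the same parity as $n$ and satisfies $|k| \leq n$. If $n$ is even then $k$ is even and $k/2 \in \Z$, proving~\ref{prp-even_sum_int}. If $n$ is odd then $k$ is odd, hence $k \neq 0$, so the coefficient is nonzero and has the form $k/2$ with $k$ odd, proving~\ref{prp-odd_sum_halves}. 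Coefficients outside $Q_M$ are identically zero in every term, contributing nothing to any of the sums.

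There is no substantive obstacle here; the content is entirely carried by Lemma~\ref{lem-unit_nulls}, which is why the paper declares the statement ``almost immediate.'' The only care needed is keeping track of the distinction between columns $j \in Q_M$ (where values are forced to be $\pm(1/2)$) and columns outside $Q_M$ (where values are forced to be $0$), so that in property~\ref{prp-odd_sum_halves} the phrase ``every nonzero $q$-coefficient'' is justified: it is shorthand for ``every coefficient indexed by $Q_M$,'' which turns out automatically to be nonzero when $n$ is odd.
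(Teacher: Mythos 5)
Your proof is correct and is exactly the route the paper intends: the paper omits the proof precisely because, as you note, everything reduces to the coordinatewise parity analysis of the $\pm(1/2)$ coefficients furnished by Lemma~\ref{lem-unit_nulls}. The only point left unstated in your treatment of Property~\ref{prp-even_sum_int} is that integrality of the summed $q$-coefficients also forces integrality of the $p$-coefficients, which is immediate because for any element of $\Ker(A)$ the vector of $p$-coefficients equals $B$ times the vector of $q$-coefficients and $B$ is integral.
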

\noindent Note that nowhere in Lemma~\ref{lem-null_pairs} is it
required that $i_\alpha \neq i_\beta$ for any $\alpha, \beta, \in 1,
2,... n$.

\begin{definition}
  For any MNTUS $M$ of $\Bdmat$, and any concise $\vz$, let $\vm(\vz)$
  be the unique element of $\mathscr{M}$ where $\{\vz, \vm(\vz)\}$ is
  linearly concise, $z_j = 0$ implies the $j\ord$ entry of $\vm(\vz)$
  is $\leq 0$ $~\forall j \leq 2(m+n)$, and for each interior row $i$
  of $M$, the $i\ord\ p$-coefficient of $\vz$ and $\vm(\vz)$ are
  equal.
\end{definition}
Equation~\ref{eqn-lin_cmb_unit_nulls} then restricts $\vm(\vz)$ to a
single equivalence class.  The requirement to be linearly concise with
$\vz$ fixes coefficients of $\vm(\vz)$ corresponding to nonzero $p$-
and $q$-coefficients of $\vz$, and also requires $\vm(\vz)$ to be
concise.  Finally, the requirement on zero entries of $\vz$ fixes the
remaining entries of $\vm(\vz)$ by dictating, in the case where a $p$-
or $q$-coefficient is nonzero in $\vm(\vz)$ but zero in $\vz$, which
entry among each pair of corresponding opposite entries is nonzero,
thus making $\vm(\vz)$ unique.

\begin{theorem}
  \label{thm-elem_vert_M}
  For any MNTUS $M$ of $\Bdmat$, and any interior row $i$, there is a
  unique vertex $\vz^i$ of $P_i$ whose nonzero $q$-coefficients are
  contained in $Q_M$ and whose $p$-coefficients of interior rows are
  all 0.  This vertex is $\vz^I - \vm(\vz^I)$ where $\vz^I$ is the
  identity solution to OHCP$_i$.  Furthermore, if $M$ is a CMNTUS of
  $\Bdmat$, $\vz^i$ is the only nonintegral vertex of $P_i$ whose
  nonzero $q$-coefficients are contained in $Q_M$.
\end{theorem}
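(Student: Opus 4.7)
The plan is to establish the theorem in three layers: first verify that $\vz^i := \vz^I - \vm(\vz^I)$ is a vertex of $P_i$ with the stated properties; second, show uniqueness in the first sentence by decomposing an arbitrary candidate through $\vz^I$; and third, in the CMNTUS setting, use the parity structure supplied by Theorem~\ref{thm-odd_ext_rows_CMNTUS} and Lemma~\ref{lem-null_pairs} to reduce the nonintegral claim to the first.

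For existence I would check that $\vz^i \in P_i$ as follows: since $\vm(\vz^I) \in \Ker(A)$ we get $A\vz^i = A\vz^I = \ve_i$, and nonnegativity is built into the sign convention defining $\vm(\vz^I)$, which makes $\vm(\vz^I)_j \leq 0$ at every entry with $z^I_j = 0$ and keeps things compatible via linearly conciseness at the unique nonzero entry of $\vz^I$. The interior $p$-coefficients of $\vz^i$ vanish because $\vm(\vz^I) \in [\vm^i]$ has interior $p$-coefficients $\ve_i$, matching those of $\vz^I$; and the nonzero $q$-coefficients of $\vz^i$ lie in $Q_M$ because $\vm(\vz^I) \in \mathscr{M}$ while $\vz^I$ contributes nothing in $y$-entries. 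To see $\vz^i$ is a vertex I would invoke Theorem~\ref{thm-vk_active}: for each $\vz^K \in \Ker(A) \setminus \{\vzero\}$ I must produce a coordinate $j$ with $z^i_j = 0$ and $z^K_j \neq 0$. If $\vz^K$ has a nonzero $q$-coefficient outside $Q_M$, the corresponding $y$-entry works; otherwise $\vz^K \in \mathscr{M}$ and~(\ref{eqn-lin_cmb_unit_nulls}) writes $\vz^K = \sum_{j \in P_M} q_j \vm^j$, in which case either some $q_{j_0} \neq 0$ with $j_0 \neq i$ (so the interior $p$-row $j_0$ witnesses, since $\vz^i$ vanishes on every interior $p$-row) or $\vz^K = q_i \vm^i$ is proportional to $\vm^i$, whereupon the entries at row $i$ witness by the sign structure.

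Uniqueness in the first sentence follows by a short decomposition: any vertex $\vz \in P_i$ with nonzero $q$-coefficients in $Q_M$ and all interior $p$-coefficients zero writes as $\vz^I + \vz^K$ with $\vz^K \in \mathscr{M}$ of interior $p$-coefficient $-\ve_i$, hence $\vz^K$ lies in the equivalence class of $-\vm(\vz^I)$, and nonnegativity together with the sign convention uniquely pins $\vz^K = -\vm(\vz^I)$.

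For the CMNTUS assertion I would show that every nonintegral vertex $\vz' \in P_i$ with nonzero $q$-coefficients in $Q_M$ necessarily has all interior $p$-coefficients zero, whence the previous paragraph forces $\vz' = \vz^i$. Writing $\vz' - \vz^I = \sum_{j \in P_M} q_j \vm^j$, Theorem~\ref{thm-odd_ext_rows_CMNTUS} makes every exterior $p$-coefficient of each $\vm^j$ an odd multiple of $1/2$, and Lemma~\ref{lem-null_pairs} then ties integrality of $\vz'$ to the parity of $\sum_j q_j$ when the $q_j$ are integers, with $\vz'$ integral exactly when $\sum_j q_j$ is even. Nonintegrality of $\vz'$ therefore requires either some noninteger $q_j$ or integer $q_j$ with $\sum_j q_j$ odd. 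The noninteger case is excluded because a noninteger interior $p$-coefficient at a row $j$ admits $\vm^j - \vm^{j'}$ as a witness against Theorem~\ref{thm-vk_active}. In the integer case, if some $q_{j_0} \neq 0$ with $j_0 \neq i$, using that $|P_M| \geq 3$ for any MNTUS in $\Bdmat$ (two $q$-simplices cannot share two distinct $p$-faces), I pick $j_1 \neq i$ with $q_{j_1} = 0$ and use $\vm^{j_1}$ as a witness at the zero interior row $j_1$ of $\vz'$; symmetrically, if $q_j = 0$ for all $j \neq i$ but $q_i \neq -1$, then $\vm^i - \vm^{j_0}$ for any $j_0 \neq i$ witnesses at row $j_0$. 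The only configuration left is $q_i = -1$ with $q_j = 0$ for $j \neq i$, forcing $\vz' = \vz^i$. The main obstacle is precisely this last case analysis: one must orchestrate the kernel witnesses so that, after accounting for linearly concise sign conventions between $\vz'$ and each $\vm^j$ or $\vm^j - \vm^{j'}$, the coordinate that Theorem~\ref{thm-vk_active} produces actually vanishes in $\vz'$, and so that the parity bookkeeping from Lemma~\ref{lem-null_pairs} is preserved through the exterior $p$-row contributions guaranteed by the CMNTUS property.
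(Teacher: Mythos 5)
Your treatment of the first two sentences is fine and close in spirit to the paper (you verify basicness of $\vz^i=\vz^I-\vm(\vz^I)$ directly through Theorem~\ref{thm-vk_active} and the triviality of $\Ker(M)$, where the paper instead shows that no decomposition as in Theorem~\ref{thm-v_plus_vk_basic} can exist; both work, modulo the usual glossing of entries versus coefficients). The genuine gap is in the ``furthermore'' part, which is the heart of the theorem. Your exclusion of the case of a noninteger $q_j$ claims that $\vm^j-\vm^{j'}$ is ``a witness against Theorem~\ref{thm-vk_active},'' i.e.\ a nonzero kernel element all of whose nonzero entries lie where $\vz'$ is nonzero. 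You never verify this: you would need $\vz'$ to be nonzero at the interior row $j'$, at every exterior row where the difference of the two half-odd coefficient vectors does not cancel, and at every column of $Q_M$ where $\vm^j$ and $\vm^{j'}$ disagree --- and none of these hold in general, since the $q$-support of $\vz'$ is only \emph{contained} in $Q_M$ and its exterior $p$-coefficients can vanish. More tellingly, this step makes no use of columnwise minimality, yet the statement it purports to prove is false for a mere MNTUS: if a CMNTUS $M'$ is columnwise contained in $M$ (as in Figure~\ref{fig-MNTUSnotCMNTUS}), then for an interior row $i$ common to both, the vertex supplied by the first part of the theorem applied to $M'$ is a nonintegral vertex of $P_i$ with $q$-support in $Q_M$ whose interior-row coefficients relative to $M$ include half-odd (noninteger) values. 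So an argument that rules out noninteger $q_j$ without invoking the CMNTUS hypothesis cannot be correct. The paper's mechanism here is exactly what your proposal is missing: split the kernel part of $\vz'$ into its nonintegrally- and integrally-weighted $q$-columns, show via Lemma~\ref{lem-must_cancel_two}, Lemma~\ref{lem-frac_basic} and Cramer's rule that the nonintegral block must contain a non-TU submatrix on a \emph{proper} subset of the columns of $Q_M$ (the odd exterior row from Theorem~\ref{thm-odd_ext_rows_CMNTUS} guarantees at least one integrally-weighted column drops out), contradicting Definition~\ref{def-CMNTUS}.

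There is also a directional error in your integer, odd-sum case. To refute basicness of $\vz'$ via Theorem~\ref{thm-vk_active} you must exhibit a kernel element with \emph{no} coordinate that is zero in $\vz'$ and nonzero in the kernel element; instead you point to $\vm^{j_1}$ (or $\vm^i-\vm^{j_0}$) ``witnessing at the zero interior row $j_1$'' (or $j_0$) of $\vz'$ --- but a kernel element that is nonzero at a coordinate where $\vz'$ vanishes is precisely one that is \emph{compatible} with $\vz'$ being basic, so it proves nothing. In addition, the existence of some $j_1\neq i$ with $q_{j_1}=0$ is assumed without justification (all interior coefficients other than $i$ could be nonzero). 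Your parity bookkeeping via Lemma~\ref{lem-null_pairs} and Theorem~\ref{thm-odd_ext_rows_CMNTUS} (integral iff $\sum_j q_j$ even, when the $q_j$ are integers) is correct and is a nice observation, but by itself it only separates integral from nonintegral candidates; the step that forces every nonintegral candidate to collapse to $q_i=-1$, $q_j=0$ for $j\neq i$ still requires the columnwise-minimality argument sketched above, and as written your proposal does not supply it.
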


\begin{proof}
  Let $\vz^i = \vz^I - \vm(\vz^I)$.  Then $\vz^i$ is feasible, and all
  $p$-coefficients of interior rows of $\vz^i$ are zero. Because
  $\Ker(M)$ is trivial, this is the only concise feasible solution
  whose nonzero $q$-coefficients are contained in $Q_M$, with the
  $p$-coefficients of interior rows all zero.  To show $\vz^i$ is a
  basic solution, we try to decompose $-\vm(\vz^I)$ into $\vz^C +
  \vz^D$ satisfying the properties specified in
  Theorem~\ref{thm-v_plus_vk_basic}. Because $\Ker(M)$ is trivial,
  there must be some $x$-coordinate that is nonzero in $\vz^I +
  \vz^C$, but zero in $\vz^i$, implying this coordinate is also
  nonzero in $\vz^D$.  Therefore Property \ref{cnd-v_contains_vr} in
  Theorem~\ref{thm-v_plus_vk_basic} cannot be satisfied, and so
  $\vz^i$ is a basic solution of OHCP$_i$.

  Now assume $M$ is a CMNTUS.  Let $\vz$ be a basic solution not
  equivalent to $\vz^i$ whose nonzero $q$-coefficients are contained
  in $Q_M$.  Then by Lemma~\ref{lem-last_OHCP_term_cancels}, the
  $p$-coefficient for some exterior row $i_0$ is nonzero in $\vz^i$
  but zero in $\vz$.

  Since $M$ is a CMNTUS, by Theorem~\ref{thm-odd_ext_rows_CMNTUS},
  $i_0$ has an odd number of nonzero entries in $Q_M$.  Then
  Lemma~\ref{lem-unit_nulls} implies that an odd number of the
  $q$-coefficients for the columns with these nonzero entries must be
  integral in $\vz$.

  Decompose $\vz$ into $\vz^I + \vz^R + \vz^Z$, where $\vz^R$ is the
  element of $\mathscr{M}$ whose nonzero $q$-coefficients are equal to
  the nonintegral $q$-coefficients of $\vz$, and $\vz^Z$ is the
  element of $\mathscr{M}$ whose nonzero $q$-coefficients are equal to
  the integral $q$-coefficients of $\vz$. Then $\vz^R$ and $\vz^Z$
  cannot both be $\vzero$.  If $\vz^I + \vz^R$ is a basic solution,
  then by Cramer's Rule, this implies there is a non-TU matrix
  contained in the columns with nonzero $q$-coefficients in $\vz^R$,
  contradicting $M$ being a CMNTUS.

  If $\vz^I + \vz^R$ is not a basic solution, then by
  Theorem~\ref{thm-vk_active}, there is some nonzero $\vz^K \in
  \Ker(A)$ where all $p$- and $q$-coefficients nonzero in $\vz^K$ are
  also nonzero in $\vz^I + \vz^R$.  Since $\vz$ is a basic solution,
  $\vz^Z$ must cancel one of these coefficients. And since no nonzero
  $q$-coefficients in $\vz^R$ are nonzero in $\vz^Z$, these canceled
  coefficients must all be $p$-coefficients.  Since all
  $q$-coefficients of $\vz^Z$ are integral, all $p$-coefficients of
  $\vz^Z$ are also integral.  Then by Lemma~\ref{lem-must_cancel_two},
  there must be at least two nonzero integral $p$-coefficients in
  $\vz^I + \vz^R$, and so there must be at least one in $\vz^R$.  If
  we construct the input chain whose nonzero coefficients are the
  values of each of the integral $p$-coefficients of $\vz^R$
  multiplied by $-1$, then $\vz^R$ added to the identity solution for
  the OHCP with this input chain must be a basic solution to this
  OHCP.  Then by Lemma~\ref{lem-frac_basic}, there must be some
  OHCP$_j$ where $\vz^R$ added to the identity solution for OHCP$_j$
  is basic. Hence, again by Cramer's Rule, there is a non-TU submatrix
  contained in the columns with nonzero $q$-coefficients in $\vz^R$,
  contradicting $M$ being columnwise minimal.
\end{proof}

\begin{remark}
  \label{rem-zplusminusi}
  The result in Theorem~\ref{thm-elem_vert_M} holds for the case of
  $-\ve_i$ as the elementary input chain (for OHCP$_{-i}$), instead of
  the standard case of $\ve_i$. But in this case, the unique vertex,
  which we call $\vz^{-i}$, will be distinct from $\vz^i$ as used in
  the original statement of the Theorem. In further discussion, it is
  understood that when we refer to $\vz^i$, we cover both these
  possibilities.
\end{remark}

\begin{lemma}
  \label{lem-basic_elem_coeffs}
  For a set of columns $Q$ of $\Bdmat$, let there be no $i$ such that
  OHCP$_i$ has a nonintegral basic solution in $\left(P_A\right)_i$
  whose nonzero $q$-coefficients are contained in $Q$. Then for any
  basic solution $\vz^Y$ of any $OHCP_i$ in $\left(P_A\right)_i$ whose
  nonzero $q$-coefficients are contained in $Q$, all $p$-coefficients
  of $\vz^Y$ are in $\{0, \pm1\}$.
\end{lemma}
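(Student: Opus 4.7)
The plan is to prove the lemma in two stages: first, deduce that the hypothesis forces the submatrix $B_Q$ of $\Bdmat$ (columns indexed by $Q$) to be totally unimodular; second, apply Cramer's rule to the basis of $\vz^Y$ to bound each $p$-coefficient. For the first stage, suppose $B_Q$ is not TU. Then $B_Q$ contains an MNTU submatrix, which columnwise contains some CMNTUS $M'$ of $\Bdmat$ with $Q_{M'}\subseteq Q$. By Theorem~\ref{thm-elem_vert_M}, for any interior row $j$ of $M'$, OHCP$_j$ admits a nonintegral vertex $\vz^j = \vz^I - \vm(\vz^I)$ with nonzero $q$-coefficients in $Q_{M'}\subseteq Q$. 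A vertex is in particular a basic solution in $(P_A)_j$, contradicting the hypothesis. Hence $B_Q$ is TU.

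For the second stage, write $\vz^Y$ in signed form $(\vx,\vy)$ with $\vx - B\vy = \ve_i$; by Lemma~\ref{lem-basic_sol_concise} $\vz^Y$ is concise, so its basis is specified by an index set $S_p \subseteq [m]$ of signed $\vx$-columns and $S_q \subseteq Q$ of signed $\vy$-columns, with $|S_p|+|S_q|=m$ and linear independence reducing to invertibility of the $|S_q|\times |S_q|$ submatrix $B_{\bar S_p, S_q}$. Since $B_Q$ is TU, $|\det(B_{\bar S_p, S_q})|=1$. If $i \in S_p$, then $\ve_i|_{\bar S_p}=\vzero$ forces $\vy|_{S_q}=\vzero$ and $\vx = \ve_i$, so the $p$-coefficients lie in $\{0,1\}$. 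If instead $i \in \bar S_p$, Cramer's rule gives $\vy|_{S_q} = -B_{\bar S_p, S_q}^{-1}\ve_i|_{\bar S_p}$ and, for $k \in S_p$, $p_k = -(B_{S_p, S_q} B_{\bar S_p, S_q}^{-1})_{k,i}$. A standard cofactor identity rewrites this as $\pm \det(M^{**})/\det(B_{\bar S_p, S_q})$, where $M^{**}$ is obtained from $B_{\bar S_p, S_q}$ by replacing the row indexed by $i$ with the row of $B_{S_p, S_q}$ indexed by $k$. Because $M^{**}$ is a square submatrix of $B_Q$, TU-ness yields $|\det(M^{**})|\le 1$, so $|p_k|\le 1$, and since $p_k$ is a ratio of integers with denominator $\pm 1$ it lies in $\{0,\pm 1\}$.

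The main obstacle is the sign bookkeeping arising from the $[I,-I,-B,B]$ form of the OHCP LP and the signed choice of columns in the basis; these signs only scale determinants by $\pm 1$ and leave the $\{0,\pm 1\}$ bound intact. Degenerate bases present no difficulty, as the Cramer's-rule formulas remain valid when some basic variables vanish. Should this direct route encounter an unforeseen subtlety, a backup is to combine Theorem~\ref{thm-if_v_then_V} with Theorem~\ref{thm-v_plus_vk_basic} to extract, from a hypothetical basic $\vz^Y$ with $|p_k|\ge 2$ and $q$-coefficients in $Q$, a fractional basic solution of some OHCP$_k$ with $q$-coefficients still contained in $Q$, again contradicting the hypothesis.
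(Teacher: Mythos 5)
Your argument is correct, but it is a genuinely different route from the paper's. The paper's proof is a short scaling argument entirely inside its solution-manipulation machinery: by the hypothesis any such basic solution $\vz^Y$ is integral; if some $p$-coefficient $\alpha$ at coordinate $i_0$ had $\abs{\alpha}>1$, then writing $\vz^K=\vz^Y-\vz^I$ and invoking Lemma~\ref{lem-scalar_mult_basic} together with Theorem~\ref{thm-v_plus_vk_basic}, the point $\vz^{I_0}+\frac{1}{\alpha}\vz^K$ is a basic solution of the elementary OHCP at $i_0$ (with sign flipped, cf.\ Remark~\ref{rem-zplusminusi}) whose $p$-coefficient at $i$ is the fraction $-1/\alpha$ and whose $q$-support is still in $Q$, contradicting the hypothesis --- essentially the backup you sketched in your last sentence. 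Your main route instead first upgrades the hypothesis to the structural statement that the column submatrix $B_Q$ is TU (any MNTU inside $B_Q$ would, via Theorem~\ref{thm-elem_vert_M} and Lemma~\ref{lem-unit_nulls}, produce a half-integral vertex of some OHCP$_j$ with $q$-support in $Q_M\subseteq Q$), and then bounds the coordinates of a basic solution by ratios of subdeterminants of $B_Q$ via Cramer's rule. This buys a cleaner intermediate fact ($B_Q$ TU) and delivers the bound and integrality in one standard LP computation; the paper's proof buys brevity and avoids determinant bookkeeping. Two details in your write-up deserve explicit care, though neither is a real gap: the detour through a CMNTUS is unnecessary (and the extraction of a CMNTUS with columns still in $Q$ is slightly delicate under the paper's definition) --- the first assertion of Theorem~\ref{thm-elem_vert_M} already applies to any MNTUS, so any MNTU submatrix of $B_Q$ suffices; and the assertion $\abs{S_p}+\abs{S_q}=m$ is false for degenerate basic solutions as stated --- you must pad the (linearly independent) support columns to a full basis using identity ($x$-) columns only, which is always possible and keeps the $q$-part of the basis inside $Q$, so that every determinant appearing in the Cramer's-rule ratio is a subdeterminant of $B_Q$.
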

\begin{proof}
  If $\vz^Y$ is a basic solution in $\left(P_A\right)_i$ to OHCP$_i$
  for some $i$ whose nonzero $q$-coefficients are contained in $Q$, it
  must be integral.  Let $\vz^K = \vz^Y - \vz^I$, where $\vz^I$ is the
  identity solution.  If $\vz^Y$ has a coefficient $\alpha$ for the
  $x$-coordinate $i_0$ where $\abs{\alpha} > 1$, then by Lemma
  \ref{lem-scalar_mult_basic} and Theorem~\ref{thm-v_plus_vk_basic},
  if we let $\vz^{I_0}$ be the identity solution to the OHCP LP $O_0$
  whose input chain has all zeros except for coordinate $i_0$, which
  has coefficient $\pm 1$ that is opposite in sign to the coefficient
  of $i_0$ in $\vz^Y$, then $\vz^{I_0} + \displaystyle
  \frac{1}{\alpha}\vz^K$ is a basic solution to $O_0$ with the
  $p$-coefficient of $i$ being the nonintegral value $-(1/\alpha)$.
  However, any basic solution of $O_0$ where the nonzero
  $q$-coefficients are contained in $Q$ must still be integral, giving
  us a contradiction.
\end{proof}

\section{NTU Neutralized Complexes}
\label{sec-neut_complexes}

We define the concept of NTU neutralization, and present results
characterizing this condition.
\begin{definition}
  \label{def-neutralized}
  For any interior row $i$ of an MNTUS $M$ of $\Bdmat$, let $\vk^i$
  represent a concise integral element of $\Ker(A)$ whose sum of
  $p$-coefficients of interior rows is odd, $\left( \vk^i - \vm(\vk^i)
  \right)\toX \neq \vzero$, and let the absolute value of each
  $p$-coefficient of $\vk^i - \vm(\vk^i)$ be less than or equal to the
  absolute value of this coefficient in $\vz^i$.  If each interior row
  $i$ of $M$ has such a $\vk^i$, then $M$ is \emph{neutralized}.  If
  all MNTU submatrices of $\Bdmat$ are neutralized, then $K$ is
  \emph{NTU neutralized in the $q\ord$ dimension}.
\end{definition}
\begin{theorem}
  \label{thm-MNTUS_vert_toX}
  For any MNTUS $M$ of $\Bdmat$, the projection $\vz^i\toX$ for each
  interior row $i$ is a convex combination of $\vz^1\toX$ and
  $\vz^2\toX$ where both $\vz^1$ and $\vz^2$ are integral elements of
  $P_i$ if and only if $M$ is neutralized.
\end{theorem}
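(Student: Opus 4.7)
My plan is to prove both directions by an explicit construction centered on the chain $\vk^i - \vm(\vk^i)$, which encodes the ``shortcut'' across the fractional vertex $\vz^i = \vz^I - \vm(\vz^I)$ from Theorem~\ref{thm-elem_vert_M}.

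\emph{Forward direction.} Assume $M$ is neutralized and fix an interior row $i$ with neutralizing chain $\vk^i$ from Definition~\ref{def-neutralized}. I will take
\[
\vz^1 \;=\; \vz^i + (\vk^i - \vm(\vk^i)),\qquad
\vz^2 \;=\; \vz^i - (\vk^i - \vm(\vk^i)),
\]
passed to concise form. Then $\vz^i\toX = \tfrac{1}{2}\vz^1\toX + \tfrac{1}{2}\vz^2\toX$ trivially, and $\vz^1\toX \neq \vz^2\toX$ follows from the hypothesis $(\vk^i - \vm(\vk^i))\toX \neq \vzero$. Feasibility $A\vz^j = \ve_i$ is immediate since $\vk^i, \vm(\vk^i) \in \Ker(A)$, and nonnegativity of the concise representatives follows from the coefficientwise bound $|(\vk^i - \vm(\vk^i))_p| \leq |(\vz^i)_p|$. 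The substantive step is integrality of $\vz^1, \vz^2$: by Lemma~\ref{lem-unit_nulls} the nonzero $q$-entries of $\vz^i$ on $Q_M$ are $\pm\tfrac{1}{2}$, and because the sum of interior $p$-coefficients of $\vk^i$ is odd, Lemma~\ref{lem-null_pairs} forces every nonzero $q$-entry of $\vm(\vk^i)$ on $Q_M$ (and hence of $\vk^i - \vm(\vk^i)$ on $Q_M$) to be a half-odd integer. Adding these cancels the half-integer parts. The same parity argument applied to exterior $p$-rows, together with both $\vz^i$ and $\vk^i - \vm(\vk^i)$ vanishing on interior $p$-rows, yields integer $p$-entries.

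\emph{Backward direction.} Suppose $\vz^i\toX = (1-\lambda)\vz^1\toX + \lambda\vz^2\toX$ with $\lambda \in (0,1)$ and $\vz^1, \vz^2$ integer in $P_i$. I first normalize to a \emph{tight bracketing} by adding an integer $\vw \in \Ker(A)$ to $\vz^1$ and subtracting it from $\vz^2$, chosen so that in the new pair $\lambda = \tfrac{1}{2}$ and $(\vz^2 - \vz^1)\toX$ equals $\pm 1$ at the coordinates where $\vz^i\toX$ is half-odd integer and $0$ at the coordinates where $\vz^i\toX$ is integer. The identity $(\vz^1)_r + (\vz^2)_r = 2(\vz^i)_r$ forces $(\vz^2 - \vz^1)_r$ to be even or odd according to whether $(\vz^i)_r$ is integer or half-odd, making such a $\vw$ consistent with integrality. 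After normalization $\vz^1$ has zero interior $p$-coefficients, so setting $\vk^i := \vz^1 - \vz^I$ gives an integer element of $\Ker(A)$ whose interior $p$-coefficients are $-\delta_{ir}$, summing to $-1$ (odd). Since $\vm(\vk^i)$ and $-\vm(\vz^I)$ have identical interior $p$-coefficients, they belong to the same equivalence class in $\mathscr{M}$, so $\vk^i - \vm(\vk^i)$ and $\vz^1 - \vz^i$ have the same $p$- and $q$-coefficients. From this, $(\vk^i - \vm(\vk^i))\toX = (\vz^1 - \vz^i)\toX \neq \vzero$ (since $\vz^1\toX$ is integer and $\vz^i\toX$ is not), and the coefficient bound reduces to $|(\vz^1 - \vz^i)_p| \leq |(\vz^i)_p|$, which holds because tight bracketing makes $(\vz^1 - \vz^i)_p$ equal to $\pm\tfrac{1}{2}$ or $0$ on each exterior row, never exceeding $|(\vz^i)_p|$.

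\emph{Main obstacle.} The chief technical difficulty is the normalization step in the backward direction, i.e.\ showing that an integer $\vw \in \Ker(A)$ achieving the tight-bracketing reduction of $(\vz^2 - \vz^1)\toX$ always exists. The $\mathscr{X}$-coordinates of $\vw$ are forced by the parity-respecting reduction; the task is to extend them to $y$-coordinates integrally so that $\vw \in \Ker(A)$, which amounts to verifying that the target $\mathscr{X}$-vector lies in the integer image of $B$. This should follow from the parity properties supplied by Lemmas~\ref{lem-unit_nulls} and~\ref{lem-null_pairs} on $\mathscr{M}$, combined with the integer structure of $\Ker(A)$ on columns outside $Q_M$. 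Once $\vw$ is produced, the remaining verifications are coefficientwise bookkeeping through those same lemmas and Theorem~\ref{thm-elem_vert_M}.
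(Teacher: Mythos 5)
Your forward direction is essentially the paper's: take $\vz^1,\vz^2 = \vz^i \pm (\vk^i - \vm(\vk^i))$, get integrality from the parity facts in Lemmas~\ref{lem-unit_nulls} and~\ref{lem-null_pairs}, and get feasibility of the projections from the coefficientwise bound in Definition~\ref{def-neutralized}; that part is fine.

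The backward direction, however, has a genuine gap, and it sits exactly where you placed your ``main obstacle.'' Your whole argument routes through the tight-bracketing normalization: an integral $\vw \in \Ker(A)$ that moves the given pair to endpoints $\vz^i \mp \tfrac{1}{2}\vd'$ with $(\vd')\toX$ equal to $\pm 1$ precisely on the fractional support of $\vz^i\toX$ and $0$ elsewhere. But an integral kernel element $\vd'$ with $\vz^i \pm \tfrac{1}{2}\vd'$ integral and feasible is, up to the bookkeeping with $\vm(\cdot)$, exactly the object $\vk^i - \vm(\vk^i)$ whose existence the neutralization hypothesis is supposed to deliver --- so the normalization step is essentially the conclusion being assumed. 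The tools you invoke cannot close it: Lemmas~\ref{lem-unit_nulls} and~\ref{lem-null_pairs} only control elements of $\mathscr{M}$, i.e.\ kernel elements whose nonzero $q$-coefficients lie in $Q_M$, whereas your $\vw$ must also cancel the (possibly nonzero, integral) discrepancies $(\vz^2-\vz^1)\toX$ on coordinates where $\vz^i\toX$ is integral, which involves columns outside $Q_M$ and is not a parity statement. Note also that replacing $(\vz^1,\vz^2)$ by $(\vz^1+\vw,\vz^2-\vw)$ does not preserve the convex-combination identity unless $\lambda = \tfrac{1}{2}$ already, so even the form of the normalization needs care.

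The paper avoids needing any such symmetric unit-difference pair. From the convex-combination hypothesis it extracts (via Corollary~\ref{cor-vk_active_X}) a direction $\vk_{\text{ex}}^i$ supported inside the support of $\vz^i$, takes the \emph{smallest} $\alpha>0$ with $\vz^i + \alpha\vk_{\text{ex}}^i$ integral, reads off from integrality that the $q$-coefficients of $\alpha\vk_{\text{ex}}^i$ on $Q_M$ are half-odd and integral elsewhere, gets the $p$-coefficient bound from feasibility of both $\left(\vz^i \pm \alpha\vk_{\text{ex}}^i\right)\toX$, and then sets $\vk^i = \alpha\vk_{\text{ex}}^i + \vm^j$ for a suitable $\vm^j \in [\vm^j]$ to make the interior $p$-coefficient sum odd. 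If you want to salvage your route, you should abandon the exact $\pm 1$/$0$ normalization and instead argue directly from a minimal rescaling of the difference direction, as the paper does; as written, the backward implication is not proved.
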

\begin{proof}
  First, we the ignore the restriction that $\vz^1$ and $\vz^2$ are
  integral, and say by Corollary~\ref{cor-vk_active_X} that each
  $\vz^i\toX$ is not a basic solution to OHCP$_i\toX$, and hence a
  convex combination of $\vz^1\toX$ and $\vz^2\toX$ with $\vz^1, \vz^2
  \in P_i$, if and only if there exists a $\vk_{\text{ex}}^i$ with
  $\vk_{\text{ex}}^i\toX \neq \vzero$, and where all nonzero
  coefficients of $\vk_{\text{ex}}^i\toX$ are nonzero in $\vz^i$.

  If for a given $\vz^i$ there is a $\vk^i$ satisfying
  Definition~\ref{def-neutralized}, by Lemma~\ref{lem-equiv_coll_conc}
  we may adjust $\vk^i$ and $\vm(\vk^i)$ if necessary so that
  $\{\vk^i, \vz^i, \vm(\vk^i)\}$ is linearly concise. Then $\vk^i -
  \vm(\vk^i)$ satisfies the conditions for $\vk_{\text{ex}}^i$.  So
  let $\vk_{\text{ex}}^i = \vk^i - \vm(\vk^i)$.  Since $\vk^i$ is
  integral, by Lemma~\ref{lem-null_pairs}, each $q$-coefficient in
  $Q_M$ of $\vk_{\text{ex}}^i$ is nonintegral with a denominator 2.
  By Lemma~\ref{lem-unit_nulls} and Theorem \ref{thm-elem_vert_M},
  this is also true of $\vz^i$.  Every other $q$-coefficient in both
  $\vk_{\text{ex}}^i$ and $\vz^i$ is integral. Hence if we let $\vz^1
  = \vz^i + \vk_{\text{ex}}^i$, and $\vz^2 = \vz^i -
  \vk_{\text{ex}}^i$, then both $\vz^1$ and $\vz^2$ are integral.
  Because the absolute value of each $p$-coefficient of
  $\vk_{\text{ex}}^i$ is less than or equal to the absolute of this
  coefficient in $\vz^i$, $\vz^1\toX$ and $\vz^2\toX$ are both in
  $P_i\toX$.  If either $\vz^1$ or $\vz^2$ is not in $P_i$, then by
  Corollary~\ref{cor-int_iff_int} we may use the same method as in
  Corollary~\ref{cor-equiv_vert} to transform either into some
  solution that is concise, integral, feasible, equivalent, and with
  all $x$-coefficients unchanged, keeping $\vz^i\toX$ a convex
  combination of $\vz^1\toX$ and $\vz^2\toX$.

  Now suppose that for some $i$, $\vz^i\toX$ is a convex combination
  of $\vz^1\toX$ and $\vz^2\toX$ with $\vz^1$ and $\vz^2$ feasible and
  integral.  Then there must be some $\vk_{\text{ex}}^i$ satisfying
  the same qualities as above, and some $\alpha >0$ and $\beta < 0$
  where $\vz^i + \alpha\vk_{\text{ex}}^i$ and $\vz^i +
  \beta\vk_{\text{ex}}^i$ are both integral with their projections in
  $P_i\toX$.

  Let $\alpha$ be of least absolute value such that $\vz^i +
  \alpha\vk_{\text{ex}}^i$ is integral. Because both $(\vz^i +
  \alpha\vk_{\text{ex}}^i)\toX$ and $(\vz^i -
  \alpha\vk_{\text{ex}}^i)\toX$ are in $P_i\toX$, the absolute value
  of each $p$-coefficient of $\alpha\vk_{\text{ex}}^i$ is less than
  the absolute value of this coefficient in $\vz^i$.

  All $q$-coefficients of $\alpha\vk_{\text{ex}}^i$ in $Q_M$ must be
  nonintegral with denominator 2, and all other $q$-coefficients
  integral.  Hence for any interior row $j$, there is some $\vm^j \in
  [\vm^j]$ where $\alpha\vk_{\text{ex}}^i + \vm^j$ is concise and
  integral.  Then any such $\alpha\vk_{\text{ex}}^i + \vm^j$ satisfies
  the conditions for $\vk^i$.
\end{proof}

\noindent We now present out main result, which states that the
complex being NTU neutralized is equivalent to none of the nonintegral
vertices of the OHCP LP projecting down to vertices in the projection
$P\toX$. Hence we cannot have a unique fractional optimal solution
when the complex is NTU neutralized.
\begin{theorem}
  \label{thm-K_neutralized}
  For a given complex $K$ with boundary matrix $\Bdmat$, the
  projection $\vz\toX$ of each nonintegral vertex $\vz$ of any OHCP LP
  over $K$ with integral input $p$-chain $\vc$ and polyhedron $P$ is
  not a vertex of $P\toX$ if and only if $K$ is NTU neutralized in the
  $q\ord$ dimension.
\end{theorem}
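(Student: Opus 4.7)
The plan is to combine three pillars already established in the paper: the reduction from arbitrary integral input chains to elementary chains via Corollary \ref{cor-frac_vert_X}, the identification of the relevant nonintegral vertices of OHCP$_i$ with the canonical vertices $\vz^i$ associated with a CMNTUS via Theorem \ref{thm-elem_vert_M}, and the projective characterization of neutralization provided by Theorem \ref{thm-MNTUS_vert_toX}. I would prove the contrapositive of both directions, showing that there exists an OHCP LP over $K$ with integral input chain and a nonintegral vertex $\vz$ such that $\vz\toX$ is a vertex of $P\toX$ if and only if some MNTUS (equivalently, CMNTUS) of $\Bdmat$ fails to be neutralized.

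First I would invoke Corollary \ref{cor-frac_vert_X} to replace the original existential statement with its elementary counterpart: a nonintegral vertex $\vz$ of some OHCP LP with $\vz\toX$ a vertex of $P\toX$ exists if and only if, for some $i$, OHCP$_i$ admits a nonintegral vertex $\vz'$ all of whose nonzero $y$-coefficients are nonintegral and with $\vz'\toX$ a vertex of $P_i\toX$. So the target is reduced to characterizing the existence of such a $\vz'$.

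Next I would show that any such $\vz'$ must be equivalent (in the sense of Corollaries \ref{cor-equiv_vert} and \ref{cor-equiv_vert_X}) to the canonical vertex $\vz^i = \vz^I - \vm(\vz^I)$ of some CMNTUS $M$ of $\Bdmat$ having $i$ as interior row (with the $\pm i$ convention from Remark \ref{rem-zplusminusi}). Since every nonzero $y$-coefficient of $\vz'$ is nonintegral, the columns of $\Bdmat$ carrying those coefficients cannot support an integral basic feasible solution of OHCP$_i$; by the Cramer's-rule argument used in Theorem \ref{thm-elem_vert_M} together with Lemma \ref{lem-basic_elem_coeffs}, they must contain an MNTUS, and by minimality enforced by basicness of $\vz'$ we may shrink to a CMNTUS $M$ whose columns $Q_M$ contain exactly the nonzero $y$-coefficient columns of $\vz'$. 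Lemma \ref{lem-unit_nulls} then forces those $y$-coefficients to be $\pm 1/2$, the $p$-coefficients of interior rows of $M$ to vanish in $\vz'$, and row $i$ to be interior to $M$. Applying the uniqueness clause of Theorem \ref{thm-elem_vert_M} within $Q_M$ identifies $\vz'$ with $\vz^i$ up to the equivalence of Corollary \ref{cor-equiv_vert}.

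Finally I would close the loop via Theorem \ref{thm-MNTUS_vert_toX}. By Corollary \ref{cor-equiv_vert_X}, $\vz'\toX$ is a vertex of $P_i\toX$ precisely when $\vz^i\toX$ is a vertex of $P_i\toX$. The initial step of the proof of Theorem \ref{thm-MNTUS_vert_toX} (before the integrality refinement) shows, via Corollary \ref{cor-vk_active_X}, that $\vz^i\toX$ fails to be a basic solution of OHCP$_i\toX$ if and only if some $\vk^i_{\text{ex}}$ of the prescribed form exists, and this in turn is equivalent to $M$ being neutralized. Chaining the equivalences gives: a nonintegral vertex $\vz$ of some OHCP LP with $\vz\toX$ a vertex of $P\toX$ exists if and only if some CMNTUS of $\Bdmat$ is not neutralized, which is equivalent to $K$ not being NTU neutralized in the $q\ord$ dimension. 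Taking the contrapositive yields the stated biconditional.

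The main obstacle I anticipate is the middle step: certifying that a generic nonintegral vertex $\vz'$ with all nonzero $y$-coefficients nonintegral really does pin down a single CMNTUS and coincide with its canonical $\vz^i$. One has to carefully combine the triviality of $\Ker(M)$ for MNTUSs, the half-integrality forced by Lemmas \ref{lem-unit_nulls} and \ref{lem-null_pairs}, the columnwise minimality in Definition \ref{def-CMNTUS}, and the basicness conditions of Theorems \ref{thm-vk_active} and \ref{thm-v_plus_vk_basic}, to rule out the possibility that the nonintegral support of $\vz'$ splits across multiple CMNTUSs or that nonzero interior-row $p$-coefficients survive in $\vz'$. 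Once this structural reduction is in place, the remaining implications follow directly from the already-proven Theorems \ref{thm-elem_vert_M} and \ref{thm-MNTUS_vert_toX}.
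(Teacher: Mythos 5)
Your overall architecture (reduce to elementary chains via Corollary~\ref{cor-frac_vert_X}, tie the elementary fractional vertices to CMNTUS data, close with Theorem~\ref{thm-MNTUS_vert_toX}) matches the paper's, but both middle links have genuine gaps. For the direction ``not neutralized $\Rightarrow$ some fractional vertex projects to a vertex,'' you assert that $\vz^i\toX$ failing to be a basic solution of OHCP$_i\toX$ is ``equivalent to $M$ being neutralized.'' It is not: by Corollary~\ref{cor-vk_active_X}, $\vz^i\toX$ fails to be a vertex of $P_i\toX$ as soon as \emph{any} $\vk^i_{\text{ex}}\in\Ker(A)$ with the required support exists, integral or not, whereas neutralization (Definition~\ref{def-neutralized}) demands an \emph{integral} witness with the coefficient bounds. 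So when $M$ is not neutralized, $\vz^i\toX$ may still be a strict convex combination of fractional points of $P_i\toX$ and hence not itself a vertex; your chain of equivalences breaks here. The paper's proof repairs exactly this: it moves along such a $\vk^i_{\text{ex}}$ to an extreme $\alpha$ (iterating to reach a basic solution of the projection), and uses Lemma~\ref{lem-pre-im_vert_has_vert} and Corollary~\ref{cor-equiv_vert_X} to extract a \emph{different} nonintegral vertex whose projection is a vertex, with the absence of integral witnesses used to certify that at least one endpoint so reached is nonintegral.

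For the converse direction, your key structural claim --- that a nonintegral vertex $\vz'$ of OHCP$_i$ with all nonzero $y$-coefficients nonintegral can be pinned down so that its nonzero $q$-support is \emph{exactly} the columns $Q_M$ of a single CMNTUS, whence the uniqueness clause of Theorem~\ref{thm-elem_vert_M} identifies $\vz'$ with $\vz^i$ --- is unproved and is precisely the obstacle you flag. Lemma~\ref{lem-unit_nulls} constrains only elements of $\mathscr{M}$ (kernel elements supported inside $Q_M$), and the uniqueness in Theorem~\ref{thm-elem_vert_M} is restricted to vertices whose nonzero $q$-coefficients lie in $Q_M$; nothing you cite excludes fractional vertices whose support strictly exceeds the columns of any one CMNTUS. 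The paper's proof is built around this possibility: Cramer's rule forces the support to contain a CMNTUS $M$ with $i$ interior, neutralization (Theorem~\ref{thm-MNTUS_vert_toX}) rules out support equal to $Q_M$, and then Corollary~\ref{cor-must_cancel_two_X} forces additional columns cancelling two exterior rows $r,s$; Lemma~\ref{lem-basic_elem_coeffs} shows the added $q$-coefficients share one absolute value, so the enlarged support forms an orientation-reversing chain that either contains another MNTUS (to which neutralization applies again, producing integral points whose projections dominate the candidate) or contains none (in which case Cramer's rule adds no new fractional vertices); iterating this exhausts all candidates. Without that iterative enlargement argument, or a substitute for it, your step identifying $\vz'$ with $\vz^i$ does not go through, and the biconditional cannot be obtained solely from Theorems~\ref{thm-elem_vert_M} and~\ref{thm-MNTUS_vert_toX}.
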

\begin{proof}
  First, we show that $K$ being neutralized is a necessary condition.
  If $K$ is not neutralized, then by Theorem~\ref{thm-MNTUS_vert_toX},
  for some $i$ and MNTU $M$, there is a $\vz^i$ where $\vz^i\toX$ is
  not a convex combination of $\vz^1\toX$ and $\vz^2\toX$ with both
  $\vz^1$ and $\vz^2$ integral elements of $P_i$.  If $\vz^i\toX$ is
  not a convex combination of any two points in $P_i\toX$, then it is
  a vertex of $P_i\toX$.

  If $\vz^i\toX$ is a convex combination of $\vz^1\toX$ and
  $\vz^2\toX$, then by Corollary~\ref{cor-vk_active_X}, and taking the
  notation from Theorem~\ref{thm-MNTUS_vert_toX}, there is some
  $\vk_{\text{ex}}^i$ and some rational number $\alpha > 0$ such that
  $\left(\vz^i \pm \alpha\vk_{\text{ex}}^i\right)\toX$ is in
  $P_i\toX$.  Since all variables of OHCP$_i$ are bounded, there
  exists a largest value of $\alpha$ for which this is true. Let
  $\alpha$ have this largest possible value.  Then either $\left(\vz^i
  + \alpha\vk_{\text{ex}}^i\right)\toX$ or $\left(\vz^i -
  \alpha\vk_{\text{ex}}^i\right)\toX$ brings one of the nonzero
  coefficients of $\vk_{\text{ex}}^i\toX$ to $0$.  Suppose without
  loss of generality that this condition is true for $\left(\vz^i +
  \alpha\vk_{\text{ex}}^i\right)\toX$.  If $\left(\vz^i +
  \alpha\vk_{\text{ex}}^i\right)\toX$ is not a basic solution of
  (OHCP$_i)\toX$, this means there is some other
  $\vk_{\text{ex}}^{i\prime}$, with some other $\alpha'$.  In this
  case, let $\alpha\vk_{\text{ex}}^i$ represent the sum of each such
  $\alpha\vk_{\text{ex}}^i$, making $\left(\vz^i +
  \alpha\vk_{\text{ex}}^i\right)\toX$ a basic solution.

  If $\vz^i + \alpha\vk_{\text{ex}}^i$ is not integral, then by
  Corollary~\ref{cor-equiv_vert_X}, and
  Lemma~\ref{lem-pre-im_vert_has_vert}, there is some corresponding
  vertex $\vz^1$ of OHCP$_i$ that is nonintegral, and $\vz^1\toX$ is a
  vertex of $P_i\toX$. If $\vz^i + \alpha\vk_{\text{ex}}^i$ is
  integral, let $\beta$ be a nonzero rational value opposite in sign
  to $\alpha$.  There must be some such $\beta$ where $\left(\vz^i +
  \beta\vk_{\text{ex}}^i\right)\toX$ is in $P_i\toX$.  If there is a
  maximum absolute value for such a $\beta$, then because $\vz^i\toX$
  is not a convex combination of $\vz^1\toX$ and $\vz^2\toX$, with
  both $\vz^1$ and $\vz^2$ integral, $\vz^i + \beta\vk_{\text{ex}}^i$
  is nonintegral, and by a similar logic as the $\alpha$ case, there
  is some $\vz^2$ of OHCP$_i$ that is nonintegral, and $\vz^2\toX$ is
  a vertex of $P_i\toX$.

  If $\vz^i + \alpha\vk_{\text{ex}}^i$ is integral, then if there is
  no upper bound for the absolute value of $\beta$, because $\Ker(A)$
  is rational, there must be some $\vz^i + \beta\vk_{\text{ex}}^i$
  that is integral, contradicting $\vz^i\toX$ not being a convex
  combination of $\vz^1\toX$ and $\vz^2\toX$, with both $\vz^1$ and
  $\vz^2$ integral.  Therefore $K$ being NTU neutralized in the
  $q\ord$ dimension is necessary for the projection $\vz\toX$ of each
  nonintegral vertex $\vz$ of any OHCP LP over $K$ with integral input
  $p$-chain $\vc$ and polyhedron $P$ to not be a vertex of $P\toX$.

  Now assume all MNTUS of $\Bdmat$ are neutralized.  Suppose for some
  OHCP LP, there is a nonintegral vertex $\vz$ where $\vz\toX$ is a
  vertex of $P\toX$.  Then by Theorem~\ref{thm_frac_vert_X}, for some
  $i$, there is a nonintegral $\vz'$ where $\vz'\toX$ is a vertex of
  $P_i\toX$, and Corollary~\ref{cor-frac_vert_X} implies that there is
  such a $\vz'$ where all nonzero $q$-coefficients are nonintegral.
  We will attempt to find a minimal set $Q$ of columns of $\Bdmat$
  that contains the nonzero $q$-coefficients of such a $\vz'$.

  By Cramer's Rule, and the definition of columnwise minimality, $Q$
  must contain the columns of a CMNTUS $M$ where $i$ is an interior
  row of $M$.  But we know from Theorem~\ref{thm-MNTUS_vert_toX} this
  is not enough because of some $\vk^i$ with $\vz' \pm \left(\vk^i -
  \vm(\vk^i)\right)$ integral.

  By Corollary~\ref{cor-must_cancel_two_X}, we must add $q$-simplices
  until there exist $p$-coefficients $r$ and $s$ as in
  Lemma~\ref{lem-must_cancel_two}. Hence $r$ and $s$ are exterior rows
  of $M$.  Further, $r$, $s$, and $i$ are interior rows of some
  orientation-reversing $q$-chain $C$.

  By Lemma~\ref{lem-basic_elem_coeffs}, all newly added
  $q$-coefficients, and therefore all $q$-coefficients of $C$, are the
  same in absolute value.  So if the columns of $C$ contain an MNTUS,
  the $q$-coefficients of the simplices in $C$ is a solution of the
  form $\vz^{i}$ (see Remark~\ref{rem-zplusminusi}).  Therefore
  Theorem~\ref{thm-MNTUS_vert_toX} applies and we have another $\vz^i
  \pm \left(\vk^i - \vm(\vk^i)\right)$ that is integral.  Any solution
  basic in OHCP$_i$ must be some combination of this $\vz^i$ and the
  previous $\vz'$, and so the projection of any such basic solution
  must be a convex combination of at least two projections of these
  four integral values.

  If the columns of $C$ do not contain an MNTUS, then by Cramer's
  Rule, no nonintegral vertices can be added.

  So we still have not found any vertex of $P_i$ whose projection is a
  vertex of $P_i\toX$.  But each time we add $q$-coefficients in a
  minimal way to find such a vertex, we may repeat the same logic as
  above.  Therefore no such vertex can exist.
\end{proof}

\begin{remark} \label{rem-exmpl}
  In either the left or right triangulation of the Example in Section
  \ref{ssec-exmpl} for the OHCP with input chain $ef$, $\vz^i\toX$ is
  all black solid edges, each with a coefficient of $0.5$ (see
  Figure~\ref{fig-threecplxs}).  $\vm(\vz^I)\toX$ is the union of
  these black solid edges, each with coefficient $-0.5$, together with
  edge $ef$ with coefficient $1$. In the right triangulation, the
  light and dark gray chains are projections of two integral vertices
  $\vz^1\toX$ and $\vz^2\toX$, respectively.  All coefficients in both
  of these projections are 1.  Triangle $adc$ satisfies all criteria
  for $\vk^i$.  Then $\vz^i\toX = (1/2)(\vz^1\toX + \vz^2\toX)$.
  Also, $\vm(\vk^i)\toX = \vk^i\toX - (1/2)(\vz^1\toX - \vz^2\toX)$,
  and $\vz^1\toX - (\vk^i - \vm(\vk^i))\toX = \vz^2\toX + (\vk^i -
  \vm(\vk^i))\toX = \vz^i\toX$.
\end{remark}

\subsection{Connections to integral polytopes and TDI systems} \label{ssec-conIPTDI}

There exist conditions weaker than the constraint matrix being TU,
which still guarantee that the LP has integral optimal solutions in
certain cases \cite[Chap.~21,22]{Schrijver1986}. In particular,
$k$-balanced matrices define a hierarchy of such matrices, with TU
matrices at one end \cite{CoCoVu2006}. The matrix $A$ is $k$-balanced
for any $k \in \Z_{>0}$ if $A_{ij} \in \{0,\pm 1\}$ and $A$ does not
contain an MNTUS with at most $2k$ nonzero entries in each row. If the
constraint matrix $A$ of the IP in Equation~(\ref{eqn-IP}) is
$k$-balanced, then for {\em certain} integral right-hand sides $\vb$,
the polytope of the associated LP is integral \cite{CoCoVu2006}. At
the same time, the polytope of the OHCP LP is not integral even when
the simplicial complex is NTU neutralized. Indeed, the constraint
matrix $A$ in not $k$-balanced for any $k$ in this case.

A linear system $A^T \vy \leq \vf$ is {\em totally dual integral}
(TDI) if the LP $~\min \, \{ \vf^T \vx ~|~ A \vx = \vc, \; \vx \ge
\vzero \}~$ has an integral optimal solution for every $\vc \in \Z^m$
for which the minimum is finite. Every OHCP instance has a finite
minimum, and when the complex is NTU neutralized, the OHCP LP is
guaranteed to have an integral optimal solution. Hence the linear
system $A^T \vy \leq \vf$ defined by the {\em dual} of the OHCP LP
(\ref{eqn-OHCP}) using $\vf^T = \begin{bmatrix} \vw^T & \vw^T &
  \vzero^T & \vzero^T \end{bmatrix}$ is TDI. This correspondence sheds
some light on the complexity of checking if a given complex is NTU
neutralized. The problem of checking if a linear system is TDI is
coNP-complete \cite{DiFeZa2008}, but could be done in polynomial time
if the dimension, or equivalently, $\rank(A)$ is fixed
\cite{CoLoSc1984,OSSe2012}.

\section{A Class of NTU Neutralized Complexes}
\label{sec-Sub-class}
In this section, we identify a class of complexes that may have
relative torsion, but are guaranteed to be NTU Neutralized.  This is
the class of $2$-complexes whose first homology group is trivial.

\begin{theorem}
  \label{thm-1_hom_triv_NTUNeut}
  If a $2$-complex has the trivial first homology group (over $\Z$),
  then it is NTU-neutralized.
\end{theorem}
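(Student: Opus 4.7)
My plan is to show that $H_1(K;\Z)=0$ forces every MNTU submatrix $M$ of $\Bdmat$ to be neutralized in the sense of Definition~\ref{def-neutralized}. The starting point is the Universal Coefficient Theorem: combined with the torsion-freeness of $H_0(K;\Z)$, the hypothesis yields $H_1(K;\Z_2)=H_1(K;\R)=0$ and, dually, $H^1(K;\Z_2)=0$; these vanishings do the real work. Fix an MNTU submatrix $M$ of $\Bdmat$, an interior row $i$ of $M$, and let $L\subseteq K$ denote the subcomplex generated by the $2$-simplices of $Q_M$. My goal is, for each such pair $(M,i)$, to produce an integer $\vk^i\in\Ker(A)$ satisfying conditions (a)--(c) of Definition~\ref{def-neutralized}.

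The first step is a cohomological obstruction argument. Let $g\in C^1(L;\Z_2)$ be the characteristic cochain of the interior rows of $M$. Since the bipartite representation of $M$ is an induced circuit, each $\sigma\in Q_M$ contains exactly two of its three edges among the interior rows, so $\delta g=0$ on $L$ and $g$ is a cocycle. I would then show $g$ is not a coboundary on $L$ by exploiting the non-orientability encoded in the b-odd property of $M$: any would-be $\Z_2$-coloring $h$ of $V(L)$ with $g=\delta h$ would have to assign, in each $\sigma_j\in Q_M$, one color to the common vertex of its two interior edges and the opposite color to the other two, but tracing this assignment around the bipartite circuit the b-odd parity (total entries $\equiv 2\bmod 4$) forces an odd number of transitions at which the common-vertex role passes to a previously outer vertex, producing a color conflict. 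Extending $g$ by zero to $\bar g\in C^1(K;\Z_2)$: were $\bar g$ a cocycle on $K$, then $H^1(K;\Z_2)=0$ would make it a coboundary $\delta\bar h$, and $\bar h|_{V(L)}$ would exhibit $g$ as a coboundary on $L$ --- a contradiction. Therefore $\delta\bar g\neq 0$, i.e.\ some $2$-simplex of $K$ contains an odd number of interior rows of $M$ among its faces.

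The second step promotes this obstruction to an integer $2$-chain with the right boundary support. The long exact sequence of $(K,L)$ in $\Z$-coefficients combined with $H_1(K;\Z)=0$ makes the connecting map $\partial_*\colon H_2(K,L;\Z)\to H_1(L;\Z)$ surjective; lifting the generator of $H_1(L;\Z_2)$ (whose pairing with $g$ is $1$ by Step~1) to an integer class gives an integer relative $2$-cycle $Y\in C_2(K;\Z)$ with $\partial Y\in C_1(L;\Z)$ and with $\sum_{r\text{ interior of }M}(\partial Y)_r$ odd. Defining $\vk^i$ as the element of $\Ker(A)$ whose $y$-coordinates agree with (the concise representative of) $Y$ then satisfies condition~(a) of Definition~\ref{def-neutralized} immediately.

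The main obstacle --- and the technical heart of the proof --- will be condition~(c), which demands $|(\vk^i-\vm(\vk^i))_e|\le|\vz^i_e|$ entrywise. Vanishing off the edges in $\partial Q_M$ and the interior rows of $M$ is automatic from the support of $\partial Y$, but on each $e\in\partial Q_M$ one must bound the integer-minus-half-integer difference $|(\partial Y)_e-\vm(\vk^i)_e|$ against $|\vm^i_e|$. My plan is to perturb $Y$ by adding integer elements $\vm^r\pm \vm^{r'}$ of $\mathscr{M}$ (integral by Lemma~\ref{lem-null_pairs}(P1), shifting the interior-row sum by $0$ or $\pm 2$ so that the oddness in condition~(a) is preserved), thereby shifting $(\partial Y)_e$ by integer amounts on each $\partial Q_M$ edge until it lies within $1/2$ of $\vm(\vk^i)_e$. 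In the CMNTUS case Theorem~\ref{thm-odd_ext_rows_CMNTUS} guarantees $|\vm^i_e|\ge 1/2$ on every $e\in\partial Q_M$, so the adjusted difference of at most $1/2$ is within the required bound; the general MNTUS case is handled by passing to a CMNTUS columnwise contained in $M$ per Definition~\ref{def-CMNTUS}. Finally, condition~(b) falls out of Lemma~\ref{lem-null_pairs}(P4): since $\vk^i$'s interior-row sum is odd, $\vm(\vk^i)$ carries odd-numerator half-integer $q$-coordinates on $Q_M$, so $\vd=\vk^i-\vm(\vk^i)$ has nonzero half-integer $q$-coordinates, yielding $\vd\toX\neq\vzero$ and completing the verification.
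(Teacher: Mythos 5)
Your route differs substantially from the paper's: the paper works inside the M\"obius strip itself, extracting from the fractional vertex $\vz^i$ a path $h$ of exterior edges joining the endpoints of the input edge whose coefficients all have absolute value at least $1/2$, closing it up with the input edge into a loop, and using $H_1(K;\Z)=0$ to fill that loop with an integral $2$-chain; the choice of loop is precisely what makes the coefficient bound of Definition~\ref{def-neutralized} automatic. Your cohomological Steps 1--2 (deduce $H^1(K;\Z_2)=0$, show the characteristic cochain of the interior rows is a nonbounding cocycle on $L$, then lift through the surjection $H_2(K,L;\Z)\rightarrow H_1(L;\Z)$) are a legitimate alternative way to manufacture an integral element of $\Ker(A)$ with odd interior-row sum, but the central parity claim (``b-odd forces an odd number of transitions'') is asserted rather than proved, identifies the sign-based b-parity with an orientation-independent incidence parity without argument, and ignores the circuits-with-repeated-vertices cases that force the paper into the case analysis of Theorem~\ref{thm-induced_flag_trav}.

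The genuine gap is in condition (c), which you yourself call the technical heart. Your plan is to add integral elements $\vm^r\pm\vm^{r'}$ of $\mathscr{M}$ to shift $(\partial Y)_e$ until it lies within $1/2$ of $\vm(\vk^i)_e$. But $\vm(\vk^i)$ is defined by matching the interior-row $p$-coefficients of $\vk^i$, and elements of $\mathscr{M}$ with equal interior rows have equal $q$-coefficients; hence adding any element of $\mathscr{M}$ to $\vk^i$ changes $\vm(\vk^i)$ by (an equivalent copy of) that same element, so the difference $\vk^i-\vm(\vk^i)$ is unchanged in every $p$- and $q$-coefficient. The perturbation you propose therefore cannot move the quantity that conditions (b) and (c) constrain: if the abstractly lifted $Y$ violates the entrywise bound, no $\mathscr{M}$-perturbation repairs it, and a lift produced by the long exact sequence carries no control on its exterior-edge coefficients relative to the $\pm 1/2$ pattern of $\vz^i$. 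This is exactly the work the paper's path argument does: the neutralizing loop is chosen inside the support of $\vz^i$ along coefficients of magnitude at least $1/2$, so subtracting it never increases any absolute value. Two smaller problems: Definition~\ref{def-neutralized} requires every MNTUS, not only every CMNTUS, to be neutralized, so ``passing to a CMNTUS columnwise contained in $M$'' does not discharge the general case; and for a general MNTUS an exterior row can have even incidence with $Q_M$, making $\vz^i_e=0$ there, a case your bound via Theorem~\ref{thm-odd_ext_rows_CMNTUS} does not cover but the paper's path argument avoids.
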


\begin{proof}
  Any MNTU in $[\partial_2]$ is a M\"obius strip $M$ where each
  interior edge is the face of exactly two triangles of $M$.  First,
  let us assume each exterior edge is the face of exactly one triangle
  of $M$.  Then for any $\vz^i$, if we think of the elementary input
  edge as a path from its end points $v_1$ and $v_2$, this fractional
  vertex splits this path evenly into two distinct simple paths that
  together traverse all exterior edges of $M$.  Consider the direction
  of both these fractional paths as positive, so that each edge in
  $\vz^i$ has a coefficient of $+1/2$ under this consideration.

  Now consider the general case where exterior edges may be the face
  of more than one triangle in $M$.  The two fractional paths may not
  be simple now, and may loop back on themselves or each other.  Some
  of the resulting coefficients of exterior edges may now cancel each
  other out, and may even become negative.  But because these paths
  are simple and distinct in the previous case, with all coefficients
  of $+1/2$, the exterior edges still must contain a path from $v_1$
  to $v_2$ whose coefficients are all at least $+1/2$.  Call this path
  $h$.

  In the actual complex, the coefficients of $h$ in $\vz^i$ may be
  positive or negative. But they will have the same absolute value as
  our construction above.  Adding a path $h'$ of coefficient 1 in the
  opposite direction from $v_2$ to $v_1$ along $h$ is equivalent to
  adding $-1$ to each of the strictly positive coefficients.
  Therefore because each of the strictly positive coefficients were at
  least $+1/2$, adding this path from $v_2$ to $v_1$ will not increase
  the absolute value of any coefficients in $h$, or in fact for any
  coefficients in $\vz^i$.  The path $h'$, together with the
  elementary input edge, forms a loop $l$.

  Since the 1-homology of the complex is trivial, $l$ must be
  null-homologous, and so is equivalent to the $x$-coefficients of
  some integral element $\vk^i$ of $\Ker(A)$.  The sum of coefficients
  of interior edges in $\vk^i$ is odd, and $\vm(\vk^i) = \vm(\vz^I)$.
  Since $\vk^i\toX = \vz^I + h'$, and by the properties of $h'$
  relative to $\vz^i$ described above, the absolute value of each
  1-coefficient of $\vk^i - \vm(\vk^i)$ is less than or equal to the
  absolute value of this coefficient in $\vz^i$.  Therefore $\vz^i$ is
  neutralized, and since it is arbitrary, the complex is NTU
  Neutralized.
\end{proof}

\begin{remark}
  \label{rem-Add_two_triangs}
  In Figure~\ref{fig-threecplxs}, the left complex becomes a member of
  the subclass described in Theorem~\ref{thm-1_hom_triv_NTUNeut} if we
  add the triangles $akm$ and $dfh$.  As mentioned earlier, adding
  triangle $akm$ alone is enough to make the complex NTU neutralized,
  though.
\end{remark}

\begin{remark}
  \label{rem-checkH1rknotTU}
  For a $2$-complex, we could check more efficiently whether the first
  homology group is trivial \cite{DuHeSaWe2003} than checking whether
  the boundary matrix is TU \cite{Seymour1980}. If the former
  condition holds, then we can be sure that all OHCP instances on this
  complex can be solved efficiently.
\end{remark}

\section{Discussion} \label{sec-Disc}

Our results on MNTUS, in particular from Section
\ref{sec-BGs_of_MNTUs}, are specifically for such submatrices of the
boundary matrices of simplicial complexes. NTU neutralized complexes
define a class of LPs with unique structure -- these OHCP LP polytopes
may not be integral, yet, for every input chain, i.e., for every
integral right-hand side, there exists an integral optimal solution.
Our main result (Theorem \ref{thm-K_neutralized}) implies that when
$K$ is NTU neutralized, if an optimal solution of the OHCP LP is
nonintegral then there must exist another integral optimal solution
with the same total weight. If a standard LP algorithm finds the
fractional optimal solution, we should be able to find an adjacent
integral optimal solution using an approach similar to that of G\"uler
et al.~\cite{GuHeRoTeTs1993} for the same task in the context of
interior point methods for linear programming. This approach should
run in strongly polynomial time. 

While checking whether a linear system is TDI is coNP-complete, it is
not known whether a direct polynomial time approach could be devised
to check if the simplicial complex is NTU neutralized. Another
interesting question is whether the definition of the complex being
NTU neutralized could be simplified for low dimensional cases, which
could also be tested efficiently. We identify one class of simplicial
complexes that are guaranteed to be NTU neutralized
(Section~\ref{sec-Sub-class}). Are there other special classes of
complexes that are guaranteed to be NTU neutralized?  The NTU
neutralized complex (in right) in Figure \ref{fig-threecplxs}
illustrates a case where the same neutralizing chain neutralizes {\em
  all} relevant elementary chains. A characterization of the structure
of such complexes could also prove very useful.

\bibliographystyle{plain}

\end{document}